\documentclass[12pt,reqno]{amsart}
\usepackage{fullpage}

\usepackage{amsmath,amssymb,amsfonts,amsthm}
\usepackage{bm,bbm}
\usepackage{comment}
\usepackage{caption}
\usepackage{placeins}
\usepackage{graphicx}
\usepackage{subcaption}
\usepackage{booktabs}
\usepackage{float}

\usepackage[colorlinks=true,allcolors=blue]{hyperref}

\usepackage{enumitem}

\usepackage{algorithm}
\usepackage{algpseudocode}

\newtheorem{thm}{Theorem}[section]
\newtheorem{lem}[thm]{Lemma}
\newtheorem{remark}{Remark}

\begin{document}
\title{Inverse Scattering for Dirac Equations Arising in Waveguide Arrays}
\author{John C. Schotland}
\address{Department of Mathematics and Department of Physics, Yale University, New Haven, CT, USA}
\email{john.schotland@yale.edu}
\author{Shenwen Yu}
\address{Department of Mathematical Sciences, Tsinghua University, Beijing,  China}
\email{ysw22@mails.tsinghua.edu.cn}
\begin{abstract}
 We investigate inverse scattering problems for  Dirac equations that arise as continuum models of waveguide arrays. We first establish the well-posedness of the forward models. For the associated inverse problems, we develop the inverse Born series  and the reduced inverse Born series, providing analysis of convergence and rigorous error estimates. Numerical experiments are presented to validate the proposed algorithms and demonstrate their effectiveness.
\end{abstract}
\maketitle

\section{Introduction}
Inverse scattering problems for wave equations and related first-order systems appear naturally in numerous physical settings. In this paper, we study two such problems for Dirac equations that arise as continuum models of waveguide arrays~\cite{quantum2023}. In particular, we consider equations of the form
\begin{equation}\label{chiral case}
    -\mathrm{i}\,\partial_x \psi + \mathrm{i}\,\alpha\,\partial_y \psi + k\bigl(\beta + V(x,y)\bigr)\psi = 0,
\end{equation}
and
\begin{equation}\label{antichiral case}
    \mathrm{i}\,\beta\,\partial_x \psi + \mathrm{i}\,\alpha\,\partial_y \psi + k\bigl(V(x,y)-1\bigr)\psi = 0,
\end{equation}
Here $\alpha$ and $\beta$ are the Pauli matrices
\[
    \alpha=\begin{pmatrix}0&1\\[0.2em]1&0\end{pmatrix},
    \quad
    \beta=\begin{pmatrix}1&0\\[0.2em]0&-1\end{pmatrix},
\]
the spinor field $\psi(x,y)$ denotes the probability amplitude for creating a photon at the point $(x,y)$, $V(x,y)$ is the scattering potential for the atoms in the array, and the wavenumber $k$ is nonnegative. Eqs.~\eqref{chiral case} and \eqref{antichiral case} will be referred to as the chiral and antichiral Dirac equations, respectively. We note that Eq.~\eqref{chiral case} is hyperbolic, whereas Eq.~\eqref{antichiral case}
is elliptic. Although elliptic inverse problems are typically more ill-posed than
hyperbolic ones, this distinction must be interpreted together with the type of available
measurement data. As we shall see, the two models lead to different inverse problem
formulations and exhibit different reconstruction behavior.

The inverse problem we consider is to determine $V$ from suitable measurements of $\psi$. Our approach to this problem is to employ the inverse Born series inversion method. 
The Born series expresses the scattered field as an infinite series of multilinear operators applied to the potential. Inverting this expansion yields the inverse Born series, which allows the reconstruction of the potential as a multilinear series in the measured field. This method was analyzed in \cite{2008shari} and further analyzed in a general Banach-space setting in \cite{hoskinsAnalysisInverseBorn2022}, which describes convergence criteria and error estimates.
 In addition, we also investigate the reduced inverse Born series, following \cite{2022reduced}, in which only a distinguished subset of terms is retained. This reduction is motivated by cancellation phenomena among higher-order terms in the inverse Born series and leads to a substantial decrease in computational cost compared to the full inverse series.
 
The literature on inverse problems for Dirac equations remains relatively limited.
Existing works include inverse spectral problems
\cite{watsonInverseSpectralProblems1999,mykytyukInverseSpectralProblems2012}
and inverse boundary value problems
\cite{saloInverseProblemsPartial2010,kurylevInverseProblemsIndex2009}.
For inverse scattering problems, uniqueness results were established in
\cite{hachemPartialbarApproachInverse1995, isozakiInverseScatteringTheory1997}.
Stability estimates were obtained in
\cite{kawamotoDeterminationElectromagneticPotential2012}
using Carleman estimate techniques. Reconstruction methods
were derived in
\cite{grebertInverseScatteringDirac1992, belishevInverseProblemOnedimensional2014},
although their numerical implementation was not explored. More recently, inverse
scattering problems for nonlinear Dirac equations have been investigated in
\cite{yiBoundedTimeInverse2025}.

The paper is organized as follows. Section~2 introduces the forward problem and proves well-posedness. Section~3 develops the Born series, inverse Born series, and reduced inverse Born series. Section~4 describes the algorithms for the forward and inverse problems. Section~5 presents the results of numerical reconstructions and compares the performance of the reconstruction methods.

\section{Forward problem}
We begin by recalling the Dirac equations \eqref{chiral case} and \eqref{antichiral case} and assume that  the scattering potential $V$ is compactly supported in the rectangular domain
\[
    \Omega := [0,L_x]\times[0,L_y]\subset\mathbb{R}^2.
\]
For the chiral model, we consider the initial--boundary value problem
\begin{equation}\label{chiral model}
\left\{
\begin{aligned}
  &-\mathrm{i}\,\partial_x \psi + \mathrm{i}\,\alpha\,\partial_y \psi + k\bigl[\beta + V(x,y)\bigr]\psi = 0
  && \text{in } \Omega,\\[0.4em]
  &\psi_2(x,0)=0,\quad \psi_1(x,L_y)=0
  && \text{for } 0\le x\le L_x,\\[0.4em]
  &\psi(0,y)=g(y)
  && \text{for } 0\le y\le L_y,
\end{aligned}
\right.
\end{equation}
where $\psi=(\psi_1,\psi_2)^{\mathrm{T}}$ and $g$ is a smooth, compactly supported initial condition.
We note that $x$ plays the role of a time-like variable, whereas $y$ is space-like.
For the anti-chiral model, we decompose the total field as $\psi=\psi_0+\psi_s$, where $\psi_0$ is the incident field and $\psi_s$ denotes the scattered field. We find that $\psi_s$ obeys
\begin{equation}\label{s antichiral case}
    \mathrm{i}\,\beta\,\partial_x \psi_s + \mathrm{i}\,\alpha\,\partial_y \psi_s + k\bigl[V(x,y)-1\bigr]\psi_s
    = -k\,V(x,y)\,\psi_0 \quad  \text{in} \quad \mathbb R^2 .
    \end{equation}
Here we impose the radiation condition
\begin{equation}\label{radiation}
    \lim_{r\to\infty}\sqrt{r}\,\bigl(\partial_r\psi_s-\mathrm{i}k\psi_s\bigr)=0 ,
\end{equation}
where $r=\sqrt{x^2+y^2}$.

\subsection{Well-posedness of the forward problems}
We first consider the chiral model and rewrite Eq.~\eqref{chiral case} in the form of an evolution equation respect to the time-like variable $x$:
\[
\partial_x \psi = A\psi,
\quad
A := \alpha\,\partial_y - \mathrm{i}k\beta.
\]
We incorporate the boundary conditions into the domain of $A$ according to
\[
D(A)
:=\Bigl\{
\psi\in L^2(0,L_y;\mathbb{C}^2):
\ \psi_2(0)=0,\ \psi_1(L_y)=0
\Bigr\}.
\]
The operator $A$ is skew-adjoint on $L^2(0,L_y;\mathbb{C}^2)$, and thus it generates a strongly continuous semigroup $\{T(x)\}_{x\ge0}$ satisfying
\[
\|T(x)\|_{L^2\to L^2}=1,
\quad
T(x+x')=T(x)T(x').
\]
Eq.~\eqref{chiral model} can then be written in evolution form as
\[
\partial_x\psi = A\psi + B(x)\psi,
\quad
B(x):= -\mathrm{i}k\,V(x,\cdot).
\]

\begin{thm}
Assume that $V\in L^{\infty}(\Omega)$. Then the initial--boundary value problem \eqref{chiral model} admits a unique solution
\[
\psi\in C\bigl([0,L_x];L^2(0,L_y;\mathbb{C}^2)\bigr)
\]
satisfying the integral equation
\begin{equation}\label{iter of chiral}
    \psi(x)
    = T(x)g
    + \int_0^x T(x-x')B(x')\psi(x')\,dx'.
\end{equation}
\end{thm}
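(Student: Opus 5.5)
We work directly with the mild formulation \eqref{iter of chiral} and obtain the solution as the fixed point of a Volterra-type map. We take as given, as the paper asserts, that $A$ with the stated boundary conditions is skew-adjoint and so generates the contraction (indeed unitary) semigroup $\{T(x)\}$ with $\|T(x)\|\le 1$.

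\textbf{Setting up the map.} Let $X:=C\bigl([0,L_x];L^2(0,L_y;\mathbb{C}^2)\bigr)$ with the sup norm. Define
\[
(\Phi\psi)(x):=T(x)g+\int_0^x T(x-x')B(x')\psi(x')\,dx'.
\]
Since $V\in L^\infty(\Omega)$, multiplication by $B(x')=-\mathrm{i}kV(x',\cdot)$ is a bounded operator on $L^2(0,L_y)$ for a.e.\ $x'$, with
\[
\|B(x')\|\le k\|V\|_{L^\infty}=:M.
\]

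\textbf{Well-definedness of $\Phi$.} First, $x\mapsto T(x)g$ is continuous by strong continuity of the semigroup. Next, $x'\mapsto B(x')\psi(x')$ is strongly measurable and bounded in $L^2$ when $\psi\in X$. This holds because $V(x',y)\psi(x',y)$ is a measurable function on $\Omega$, and by Fubini it defines an element of $L^\infty(0,L_x;L^2)$; separability of $L^2$ together with Pettis' theorem gives strong measurability. Hence the Bochner integral exists. Continuity in $x$ of the integral term then follows by the standard splitting
\[
\int_0^{x+h}-\int_0^x=\int_x^{x+h}T(x+h-s)f(s)\,ds+\int_0^x\bigl(T(x+h-s)-T(x-s)\bigr)f(s)\,ds .
\]
The first piece is $O(Mh\|\psi\|_X)$. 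The second tends to zero by strong continuity and dominated convergence. We expect this measurability and continuity check to be the only point needing care, since $B$ is merely $L^\infty$ in $x$ and not continuous, so classical (rather than mild) solutions are not available and we do not claim them.

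\textbf{Contraction and existence.} By the contraction bound on $T$,
\[
\|(\Phi\psi-\Phi\phi)(x)\|\le M\int_0^x\|\psi(s)-\phi(s)\|\,ds .
\]
Iterating gives
\[
\|\Phi^n\psi-\Phi^n\phi\|_X\le \frac{(ML_x)^n}{n!}\|\psi-\phi\|_X,
\]
so some power $\Phi^n$ is a strict contraction on $X$. Alternatively one can use the weighted norm $\sup_x e^{-2Mx}\|\psi(x)\|$, in which $\Phi$ itself contracts. Banach's fixed point theorem then yields a unique $\psi\in X$ satisfying \eqref{iter of chiral}. Equivalently, the Neumann/Dyson series $\psi=\sum_n \psi^{(n)}$, with $\psi^{(0)}=T(x)g$, converges; this is the series the paper later exploits.

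\textbf{Uniqueness.} If two solutions satisfy \eqref{iter of chiral}, their difference $w$ obeys $\|w(x)\|\le M\int_0^x\|w\|$, so Gronwall's inequality gives $w\equiv0$. Uniqueness in the mild sense is precisely what the theorem asserts, since \eqref{iter of chiral} is the solution concept being used. Finally, as a consistency remark, if $V$ were smooth and $g\in D(A)$, standard perturbation theory for bounded time-dependent perturbations would upgrade $\psi$ to a classical solution of \eqref{chiral model}.
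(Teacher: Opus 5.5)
Your proof is correct and is essentially the paper's argument: both apply Banach's fixed point theorem to the Duhamel map, using $\|T(x)\|\le 1$ and $\|B(x)\|\le k\|V\|_{L^\infty(\Omega)}$. The only difference is that you make the map contract on all of $[0,L_x]$ at once, via the factor $(ML_x)^n/n!$ for $\Phi^n$ or the weighted norm $\sup_x e^{-2Mx}\|\psi(x)\|$, rather than contracting on subintervals of length $x_0=1/(2k\|V\|_{L^\infty(\Omega)})$ and patching; this avoids the gluing step the paper leaves implicit, and the factorial bound also shows the Dyson/Born series converges for every bounded $V$, not only when $\|V\|_{L^\infty(\Omega)}<1/(kL_x)$.
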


\begin{proof}
Set
\[
x_0=\frac{1}{2k\|V\|_{L^{\infty}(\Omega)}},
\quad
X_0:=C\bigl([0,x_0];L^2(0,L_y;\mathbb{C}^2)\bigr),
\]
equipped with the supremum norm. Define the mapping $\Phi_c:X_0\to X_0$ by
\[
(\Phi_c\psi)(x)
:=
T(x)g
+
\int_0^x T(x-x')B(x')\psi(x')\,dx'.
\]
Using $\|T(x)\|_{L^2\to L^2}=1$ and $\|B(s)\|_{L^2\to L^2}\le k\|V\|_{L^\infty(\Omega)}$, we obtain
\begin{align*}
    \|\Phi_c\psi-\Phi_c\varphi\|_{X_0}
&\leq
\left(\int_0^{x_0}k\|V(s,\cdot)\|_{L^\infty([0,L_y])}\,ds\right)\|\psi-\varphi\|_{X_0}\\
&\leq
 x_0 k\|V\|_{L^\infty(\Omega)}\,\|\psi-\varphi\|_{X_0}.
\end{align*}
By the choice of $x_0$,  we have $ x_0k\|V\|_{L^\infty(\Omega)}=\tfrac12$, hence $\Phi_c$ is a contraction on $X_0$. Existence and uniqueness of the solution on $[0,x_0]$ follow from Banach's fixed point theorem. We then repeat the same argument on the successive subintervals
\[
[x_0,2x_0],\ [2x_0,3x_0],\ \ldots,
\]
thereby obtaining a unique solution on each subinterval and, consequently, a unique solution on the entire interval $[0,L_x]$. 
\end{proof}

We next turn to the anti-chiral model. Define the Dirac operator
\[
\mathcal{D}:=\mathrm{i}\,\beta\,\partial_x+\mathrm{i}\,\alpha\,\partial_y-k,
\]
and assume that the incident field $\psi_0$ satisfies the homogeneous equation $\mathcal{D}\psi_0=0$. Then the scattered field satisfies
\[
\mathcal{D}\psi_s=-k\,V\,\psi,
\quad
\psi=\psi_0+\psi_s.
\]
A representation for the solution is obtained via the Green's function associated with $\mathcal{D}$. As given in \cite{quantum2023}, the Green's function is
\begin{equation}\label{Def of green}
G(\mathbf{x},\mathbf{x}^{\prime})
=\frac{k}{4}\Bigg(
\mathrm{i}H^{(1)}_0\bigl(k|\mathbf{x}-\mathbf{x}^{\prime}|\bigr)I
+
H^{(1)}_1\bigl(k|\mathbf{x}-\mathbf{x}^{\prime}|\bigr)
\Bigl(
\frac{x-x^{\prime}}{|\mathbf{x}-\mathbf{x}^{\prime}|}\beta
+
\frac{y-y^{\prime}}{|\mathbf{x}-\mathbf{x}^{\prime}|}\alpha
\Bigr)\Bigg),
\end{equation}
where $\mathbf{x}=(x,y)$ and $\mathbf{x}^{\prime}=(x',y')$.
Consequently, $\psi_s$ satisfies the Lippmann--Schwinger equation
\begin{equation}\label{iter of anti}
    \psi_s(\mathbf{x})
    =-k\int_{\Omega}G(\mathbf{x},\mathbf{x}^{\prime})\,V(\mathbf{x}^{\prime})\,\psi(\mathbf{x}^{\prime})\,d\mathbf{x}^{\prime}.
\end{equation}

\begin{thm}
Suppose that $V$ is compactly supported in $\Omega$. Define
\begin{equation}\label{def of mua}
\mu_a := k\sup _{\mathbf{x} \in \Omega} \int_{\Omega}\|G(\mathbf{x},\mathbf{x}^{\prime})\|_1 \,d\mathbf{x}^{\prime}.
\end{equation}
If
\[
\|V\|_{L^{\infty}(\Omega)}\leq \frac{1}{\mu_a},
\]
then \eqref{antichiral case} admits a unique solution in $C(\Omega; \mathbb{C}^2)$.
\end{thm}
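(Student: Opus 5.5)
The plan mirrors the chiral case: recast the problem as a fixed-point equation and apply Banach's fixed point theorem, now on $X:=C(\Omega;\mathbb{C}^2)$ with the sup norm $\|\psi\|_X=\sup_{\mathbf{x}\in\Omega}\|\psi(\mathbf{x})\|_\infty$. This is the norm dual to the matrix norm $\|\cdot\|_1$ used in $\mu_a$, up to a choice of conventions for vector and matrix norms that I would fix so that $|Mv|\le\|M\|_1|v|$. Using $\psi=\psi_0+\psi_s$ and the Lippmann--Schwinger equation \eqref{iter of anti}, the total field on $\Omega$ should satisfy
\[
\psi(\mathbf{x})=\psi_0(\mathbf{x})-k\int_\Omega G(\mathbf{x},\mathbf{x}')V(\mathbf{x}')\psi(\mathbf{x}')\,d\mathbf{x}'=:(\Phi_a\psi)(\mathbf{x}).
\]
The incident field $\psi_0$ solves $\mathcal{D}\psi_0=0$, so it is smooth and in particular lies in $X$.

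\textbf{Step 1: $\Phi_a$ maps $X$ into $X$.} The kernel $G$ has only weak singularities at $\mathbf{x}=\mathbf{x}'$. The $H_0^{(1)}$ term behaves like $\log|\mathbf{x}-\mathbf{x}'|$, and the $H_1^{(1)}$ term like $|\mathbf{x}-\mathbf{x}'|^{-1}$. Both are integrable in two dimensions, so $\mu_a<\infty$. Continuity of $\mathbf{x}\mapsto\int_\Omega G(\mathbf{x},\mathbf{x}')f(\mathbf{x}')\,d\mathbf{x}'$ for bounded $f$ follows from the standard argument for weakly singular kernels. Excise a small ball $B_\delta(\mathbf{x})$; its contribution is small uniformly in $\mathbf{x}$ because $\int_{B_\delta}|\mathbf{x}-\mathbf{x}'|^{-1}d\mathbf{x}'=2\pi\delta$. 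Away from the diagonal the kernel is uniformly continuous. \emph{This is the main technical point.} I would handle it by splitting $G$ into the $H_0$ and $H_1$ pieces and quoting the small-argument asymptotics of the Hankel functions.

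\textbf{Step 2: contraction estimate.} For $\psi,\varphi\in X$,
\[
\|\Phi_a\psi-\Phi_a\varphi\|_X\le k\sup_{\mathbf{x}\in\Omega}\int_\Omega\|G(\mathbf{x},\mathbf{x}')\|_1\,|V(\mathbf{x}')|\,d\mathbf{x}'\,\|\psi-\varphi\|_X\le\mu_a\|V\|_{L^\infty(\Omega)}\|\psi-\varphi\|_X .
\]
Under the hypothesis this constant is at most $1$. For a genuine contraction it must be strictly less than $1$, so I would read the assumption as strict: $\|V\|_{L^\infty}<1/\mu_a$. If the equality case must be covered, I would observe that $V$ is supported in a compact subset of $\Omega$. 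One can then try to show that the inequality $\int\|G\|_1|V|\le\|V\|_\infty\int_\Omega\|G\|_1$ is strict uniformly in $\mathbf{x}$, by continuity and compactness, since $\|G\|_1>0$ on the set where $V=0$. Banach's fixed point theorem then gives a unique $\psi\in X$.

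\textbf{Step 3: back to the PDE.} On $\Omega$ set $\psi_s:=\psi-\psi_0$. Outside $\Omega$ define $\psi_s$ directly by the integral in \eqref{iter of anti}. Because $G$ is the outgoing fundamental solution of $\mathcal{D}$, this $\psi_s$ satisfies \eqref{s antichiral case} in the distributional sense and inherits the radiation condition \eqref{radiation} from the Hankel asymptotics. Conversely, any radiating solution is represented by \eqref{iter of anti}, so uniqueness of the fixed point gives uniqueness of the solution of \eqref{antichiral case}.
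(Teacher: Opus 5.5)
Your proposal is correct and follows the paper's argument: Banach's fixed point theorem on $C(\Omega;\mathbb{C}^2)$ in the sup norm, with Lipschitz constant $\mu_a\|V\|_{L^\infty(\Omega)}$. Your map for the total field $\psi$ is just the paper's map for $\psi_s$ shifted by $\psi_0$. Your extra steps fill in points the paper passes over: continuity of the weakly singular volume potential, the return to the PDE with the radiation condition, and above all the equality case $\|V\|_{L^\infty(\Omega)}=1/\mu_a$, which gives only a Lipschitz constant $\le 1$, whereas the paper asserts a contraction under the non-strict inequality without comment.
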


\begin{proof}
As $x\to0$, the Hankel functions satisfy $H^{(1)}_0(x)=\mathcal{O}(\log x)$ and $H^{(1)}_1(x)=\mathcal{O}(x^{-1})$, which ensures integrability of the kernel in \eqref{Def of green} over the bounded domain $\Omega$ and hence that $\mu_a$ is well defined.

Define $\Phi_a:C(\Omega;\mathbb{C}^2)\to C(\Omega;\mathbb{C}^2)$ by
\[
(\Phi_a \psi_s)(\mathbf{x})
:=
-k\int_{\Omega}G(\mathbf{x},\mathbf{x}^{\prime})\,V(\mathbf{x}^{\prime})
\Bigl(\psi_0(\mathbf{x}^{\prime})+\psi_s(\mathbf{x}^{\prime})\Bigr)\,d\mathbf{x}^{\prime}.
\]
Under the smallness condition $\|V\|_{L^\infty(\Omega)}\le 1/\mu_a$, $\Phi_a$ is contracting in the supremum norm, and existence and uniqueness follow from Banach's fixed point theorem.
\end{proof}
In Appendix \ref{Quantitative Estimate}, we give a quantitative estimate of $\mu_a$ assuming that $\Omega$ is contained in a circle with radius $R:=\sqrt{L_X^2+L_y^2}$.

\section{Born and inverse Born series}
We now consider the inverse problems. In the chiral model, the data consists of measurements of the scattered field $\psi_s(L_x,y)$, and the objective is to reconstruct the scattering potential $V$. In the anti-chiral model, the data consists of boundary measurements $\psi_s|_{\partial\Omega}$, and the goal is also to recover $V$.
\subsection{Born series}
We first construct the Born series for chiral model. Iterating \eqref{iter of chiral} yields the series
    \begin{align*}
\psi_s(x,y)
=&-\mathrm{i}k \int_0^x T(x-x_1)\,V(x_1,y)\,\psi_0(x_1,y)\,dx_1 \\
&+(-\mathrm{i}k)^2 \int_0^x T(x-x_1)\,V(x_1,y)
     \int_0^{x_1} T(x_1-x_2)\,V(x_2,y)\,\psi_0(x_2,y)\,dx_2  dx_1 \\
&\;-\cdots .
\end{align*}
Accordingly, the Born series for the chiral model is defined by
\[
    \psi_s=K_1(V)+K_2(V,V)+K_3(V,V,V)+\cdots,
\]
where the multilinear operators $K_m:(L^{\infty}(\Omega))^m\to L^2([0,L_y])$ are given by
\begin{equation}\label{chiral def of km}
\begin{aligned}
K_m(V_1,\ldots,V_m)
={}&(-\mathrm{i}k)^m
\int_0^{L_x} T(L_x-x_1)\,V_1(x_1,\cdot) \\
&\quad \times \int_0^{x_1} T(x_1-x_2)\,V_2(x_2,\cdot) \\
&\quad \times \cdots \\
&\quad \times \int_0^{s_{m-1}} T(s_{m-1}-x_m)\,V_m(x_m,\cdot)\,\psi_0(x_m,\cdot)\,
dx_m\cdots dx_2\,dx_1 .
\end{aligned}
\end{equation}
Using the fact that $\|T\|_{Y\to Y}=1$, we immediately obtain the convergence condition for the Born series:
\begin{lem}
If $\|V\|_{L^{\infty}(\Omega)}<\frac{1}{kL_x}$, then the Born series for the chiral model converges.
\end{lem}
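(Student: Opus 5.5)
The plan is to bound each term $K_m(V,\ldots,V)$ of the Born series in $L^2([0,L_y])$ by the $m$-th term of a convergent geometric series, and then conclude absolute convergence in that Banach space. The structure of \eqref{chiral def of km} makes this direct. $K_m$ is an $m$-fold iterated integral over the region $0\le x_m\le \cdots\le x_1\le L_x$. The integrand is a composition of the semigroup operators $T(\cdot)$ with multiplication operators by $V(x_j,\cdot)$, applied to $\psi_0(x_m,\cdot)$.

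First, I will estimate the integrand pointwise in $(x_1,\ldots,x_m)$ in operator norm. Each $T(x_{j-1}-x_j)$ has norm $1$ on $L^2(0,L_y;\mathbb{C}^2)$, because $A$ is skew-adjoint, as noted before the first theorem. Each multiplication by $V_j(x_j,\cdot)$ has norm at most $\|V\|_{L^\infty(\Omega)}$. The incident field satisfies $\|\psi_0(x_m,\cdot)\|_{L^2}=\|T(x_m)g\|_{L^2}\le \|g\|_{L^2}$. Using Minkowski's inequality for Bochner integrals, this gives
\[
\|K_m(V,\ldots,V)\|_{L^2} \le k^m\|V\|_{L^\infty(\Omega)}^m\,\|g\|_{L^2}\int_{0\le x_m\le\cdots\le x_1\le L_x} dx_m\cdots dx_1 .
\]
The simplex has volume $L_x^m/m!$, so the right-hand side equals $\|g\|_{L^2}\,(kL_x\|V\|_{L^\infty})^m/m!$. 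The cruder bound obtained by replacing the simplex with the full cube $[0,L_x]^m$ is $\|g\|_{L^2}\,(kL_x\|V\|_{L^\infty})^m$. This is summable exactly when $\|V\|_{L^\infty(\Omega)}<1/(kL_x)$, which is the condition in the lemma. I will state the cube bound, since it matches the hypothesis and is all the lemma needs, and remark that the simplex bound in fact gives convergence for every bounded $V$.

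Second, summing over $m$ shows $\sum_m\|K_m(V,\ldots,V)\|_{L^2}<\infty$. Since $L^2$ is complete, the Born series converges absolutely. It remains to identify the limit with $\psi_s(L_x,\cdot)$. The partial sums of the series are the Picard iterates of the integral equation \eqref{iter of chiral}, with the $T(x)g$ term removed. The remainder after $N$ iterations is an $(N+1)$-fold integral of the same form, with $\psi$ in place of $\psi_0$. Using $\sup_x\|\psi(x)\|_{L^2}<\infty$, which follows from the continuity given by the first theorem, the same estimate shows this remainder tends to zero.

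The only delicate point is measurability and the justification of the Bochner integrals. The map $x\mapsto V(x,\cdot)$ is only $L^\infty$ in $x$, so the integrands need not be continuous. I will handle this by noting that $x\mapsto V(x,\cdot)\phi$ is strongly measurable into $L^2$ for $V\in L^\infty(\Omega)$, for instance via Fubini, and that $T$ is strongly continuous. This suffices for the norm estimates above. No other obstacle is expected.
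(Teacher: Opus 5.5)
Your proposal is correct and follows the paper's argument. The paper likewise uses $\|T(x)\|=1$ and the volume of the simplex to get $\|K_m(V,\ldots,V)\|_{L^2}\le \frac{(kL_x\|V\|_{L^\infty})^m}{m!}\|g\|_{L^2}\le (kL_x\|V\|_{L^\infty})^m\|g\|_{L^2}$ (its bound $|K_m|_\infty\le\mu_c^m\nu_c$), then compares with a geometric series; your extra steps are correct refinements the paper does not spell out, namely identifying the sum with $\psi_s(L_x,\cdot)$ via the Picard remainder, handling measurability, and noting that keeping the $1/m!$ gives convergence for every $V\in L^\infty(\Omega)$, so the smallness hypothesis is not actually needed.
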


For a multilinear operator $K_m:X^m\to Y$ between Banach spaces, we define
\[
    |K_m|_{\infty}=\sup_{n_1, \ldots, n_m \neq 0}
    \frac{\bigl\|K_m(V_1, \ldots, V_m)\bigr\|_{Y}}{\|V_1\|_{X}\cdots\|V_m\|_{X}}.
\]

\begin{thm}\label{bound of forward chiral}
Let $\mu_c:=kL_x$ and                        $\nu_c:=\|g\|_{Y}$. With $X=L^{\infty}(\Omega)$ and $Y=L^2([0,L_y])$, one has
\[
    |K_m|_{\infty}\leq \mu_c^m\,\nu_c.
\]
\end{thm}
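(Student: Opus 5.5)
The bound follows by estimating the iterated integral \eqref{chiral def of km} from the outside in, using only the contractivity of the semigroup and the pointwise bound on multiplication operators. Two elementary facts do the work. First, $\|T(s)\|_{Y\to Y}=1$ for all $s\ge 0$. Second, multiplication by $V_j(x_j,\cdot)$ is a bounded operator on $Y=L^2([0,L_y])$ with norm at most $\|V_j(x_j,\cdot)\|_{L^\infty([0,L_y])}\le \|V_j\|_{L^\infty(\Omega)}$ for a.e.\ $x_j$. We also need the unperturbed field to be controlled by the data: since $\psi_0(x)=T(x)g$ solves the homogeneous problem, we have $\|\psi_0(x_m,\cdot)\|_Y\le\|g\|_Y=\nu_c$ for every $x_m\in[0,L_x]$.

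\textbf{Step 1: pass the norm through the integrals.} Since each integrand is a continuous or strongly measurable $Y$-valued function, we apply Minkowski's integral inequality for Bochner integrals $m$ times. Writing $s_0=L_x$ and $s_{j}=x_j$, and using $|(-\mathrm{i}k)^m|=k^m$, this gives
\[
\|K_m(V_1,\ldots,V_m)\|_Y\le k^m\int_0^{L_x}\!\int_0^{x_1}\!\cdots\!\int_0^{x_{m-1}} \prod_{j=1}^m\|T(x_{j-1}-x_j)\|\,\|V_j\|_{L^\infty(\Omega)}\,\|\psi_0(x_m,\cdot)\|_Y\,dx_m\cdots dx_1 .
\]
Inserting the two facts above, the right-hand side is bounded by
\[
k^m\prod_{j=1}^m\|V_j\|_{L^\infty(\Omega)}\,\nu_c\,\cdot\,\mathrm{vol}\{0\le x_m\le\cdots\le x_1\le L_x\}.
\]

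\textbf{Step 2: bound the simplex volume.} The volume of the ordered simplex is $L_x^m/m!$. This is in fact sharper than the claim: it yields $|K_m|_\infty\le \mu_c^m\nu_c/m!$. To obtain exactly the stated bound we may simply use the cruder estimate in which each integral over $[0,x_{j-1}]$ is bounded by one over $[0,L_x]$, giving $L_x^m$. Dividing by $\prod_j\|V_j\|$ and taking the supremum then gives $|K_m|_\infty\le\mu_c^m\nu_c$.

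The only point requiring care is the measurability and Bochner integrability of the nested integrands, together with the estimate $\|\psi_0(x_m)\|\le\|g\|$. Both follow from the strong continuity of $T$ and from $V_j\in L^\infty$, so I expect no real obstacle; the argument is essentially the Duhamel/Dyson bound.
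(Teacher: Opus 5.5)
Your proposal is correct and follows the paper's proof essentially step for step. Both arguments pull the norm inside the nested integrals, use $\|T(s)\|_{Y\to Y}=1$ and $\|V_j(x_j,\cdot)\|_{L^\infty}\le\|V_j\|_{L^\infty(\Omega)}$, bound $\|\psi_0(x_m,\cdot)\|_Y$ by $\|g\|_Y$ (the paper writes this as an equality), compute the simplex volume $L_x^m/m!$, and then discard the factor $1/m!$ to reach $\mu_c^m\nu_c$.
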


\begin{proof}
From \eqref{chiral def of km} and $\|T\|_{Y\to Y}=1$, we obtain
\begin{align*}
\frac{\|K_m(V_1,\cdots,V_m)\|_Y}{\|V_1\|_X\cdots\|V_m\|_X}
&\le |(\mathrm{i}k)^m| \int_0^{L_x}\int_0^{x_1}\cdots\int_0^{x_{m-1}}\|\psi_0(x_m,\cdot)\|_Y\,dx_m\cdots dx_2\,dx_1\\
&= k^m\int_0^{L_x}\int_0^{x_1}\cdots\int_0^{x_{m-1}}\|g\|_Y\,dx_m\cdots dx_2\,dx_1\\
&=  k^m\|g\|_Y \int_0^{L_x}\int_0^{x_1}\cdots\int_0^{x_{m-1}}1\,dx_m\cdots dx_2\,dx_1\\
&= \frac{(kL_x)^m}{m!}\|g\|_Y
\le \mu_c^m \nu_c,
\end{align*}
which yields the claim.
\end{proof}

Similarly, for the anti-chiral model, iterating \eqref{iter of anti} leads to
\begin{equation}\label{def of bs for anti}
    \begin{aligned}
    \psi_s=&-k\int_{\Omega}G(\mathbf{x},\mathbf{x}_1)\,V(\mathbf{x}_1)\,\psi_0(\mathbf{x}_1)\,d\mathbf{x}_1\\
    &+k^2\int_{\Omega}G(\mathbf{x},\mathbf{x}_1)\,V(\mathbf{x}_1)\int_{\Omega}G(\mathbf{x}_1,\mathbf{x}_2)\,V(\mathbf{x}_2)\,\psi_0(\mathbf{x}_2)\,d\mathbf{x}_2\,d\mathbf{x}_1+\cdots.
\end{aligned}
\end{equation}
The corresponding Born series is
\[
    \psi_s=K_1(V)+K_2(V,V)+K_3(V,V,V)+\cdots,
\]
where $K_m:(L^{\infty}(\Omega))^m\to C(\partial\Omega)$ is defined by
\begin{equation}\label{antichiral def of km}
\begin{aligned}
K_m(V_1,\ldots,V_m)
={}& (-k)^m
\int_{\Omega}
G(\mathbf{x},\mathbf{x}_1)\,V_1(\mathbf{x}_1)
\int_{\Omega}
G(\mathbf{x}_1,\mathbf{x}_2)\,V_2(\mathbf{x}_2) \\
&\quad \times \cdots\times
\int_{\Omega}
G(\mathbf{x}_{m-1},\mathbf{x}_m)\,
V_m(\mathbf{x}_m)\,\psi_0(\mathbf{x}_m)\,
d\mathbf{x}_m\cdots d\mathbf{x}_1 .
\end{aligned}
\end{equation}
Based on the definition of $\mu_a$ and the Born series \eqref{def of bs for anti}, the following two results are obtained without difficulty.

\begin{lem}
If $\|V\|_{L^{\infty}(\Omega)}<\frac{1}{\mu_a}$, then the Born series for the anti-chiral model converges.
\end{lem}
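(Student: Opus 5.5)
The plan is to bound each term of the Born series \eqref{def of bs for anti} by a geometric quantity and then sum. Write the $m$-th term as $K_m(V,\ldots,V)$ from \eqref{antichiral def of km}. We work in the supremum norm on $C(\Omega;\mathbb{C}^2)$; the boundary values in $C(\partial\Omega)$ are then controlled by restriction. Each component of $\psi_0$ is bounded on the compact set $\Omega$, since it is a solution of $\mathcal{D}\psi_0=0$, for instance a plane wave. Set $\nu_a:=\|\psi_0\|_{L^\infty(\Omega)}$.

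The key step is an inductive estimate on the nested integrals. Define $u_m(\mathbf{x}_{j})$ to be the innermost $m-j$ integrals as a function of the outer variable. At each level we use the pointwise matrix bound
\[
\Bigl|\int_\Omega G(\mathbf{x},\mathbf{x}')V(\mathbf{x}')f(\mathbf{x}')\,d\mathbf{x}'\Bigr|\le \|V\|_{L^\infty(\Omega)}\,\|f\|_{L^\infty(\Omega)}\int_\Omega\|G(\mathbf{x},\mathbf{x}')\|_1\,d\mathbf{x}'
\]
with compatible vector norms. The operator norm induced by the vector $\infty$-norm is dominated by the entrywise $1$-norm $\|\cdot\|_1$, which is what makes this bound valid. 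Taking the supremum over $\mathbf{x}\in\Omega$ gives a factor of at most $\mu_a/k$. Including the prefactor $k$ at each level, each application of the integral operator multiplies the sup norm by at most $\mu_a\|V\|_{L^\infty(\Omega)}$. Iterating $m$ times yields
\[
\|K_m(V,\ldots,V)\|_{L^\infty}\le (\mu_a\|V\|_{L^\infty(\Omega)})^m\,\nu_a.
\]
This is a statement about $\mathbf{x}\in\Omega$. If the measurement points on $\partial\Omega$ are included in $\Omega$, which holds because $\Omega$ is closed, the same bound holds on $\partial\Omega$.

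Under the hypothesis $\|V\|_{L^\infty(\Omega)}<1/\mu_a$, the ratio $q:=\mu_a\|V\|_{L^\infty(\Omega)}$ is less than $1$. The series $\sum_m K_m(V,\ldots,V)$ is therefore dominated by $\nu_a\sum_m q^m<\infty$ and converges absolutely, uniformly on $\Omega$ (and on $\partial\Omega$), in the Banach space $C$. Each term is continuous, because convolution of the weakly singular kernel $G$ with a bounded function yields a continuous function. The limit therefore lies in $C(\partial\Omega)$. By the contraction argument of the preceding theorem, this limit coincides with the Neumann series of $\Phi_a$, and hence with $\psi_s$.

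The main obstacle is justifying the sup-norm bound near the diagonal singularity of $G$. Here $H_1^{(1)}$ behaves like $|\mathbf{x}-\mathbf{x}'|^{-1}$, which is integrable in two dimensions, and $H_0^{(1)}$ has only a logarithmic singularity. This integrability was already used to show that $\mu_a$ is finite in the well-posedness theorem, so it can be taken from there. A second point to watch is that continuity of each iterate requires the integral operator to map $L^\infty$ into $C$. I would justify this by the uniform integrability of the weakly singular kernel: the $L^1$-continuity of $\mathbf{x}\mapsto G(\mathbf{x},\cdot)$ on $\Omega$ follows by splitting off a small ball around the singularity.
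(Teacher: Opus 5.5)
Your proposal is correct and follows the route the paper intends. The paper writes no proof and says the lemma follows ``without difficulty'' from the definition of $\mu_a$; the implied argument is exactly your term-by-term bound $\|K_m(V,\ldots,V)\|\le(\mu_a\|V\|_{L^\infty(\Omega)})^m\nu_a$ (the diagonal case of $|K_m|_\infty\le\mu_a^m\nu_a$) summed as a geometric series. Your extra details — continuity of the iterates, $\partial\Omega\subset\Omega$, and the matrix-norm choice (harmless, since $G$ is symmetric and its row- and column-sum norms agree) — fill in what the paper leaves implicit.
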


\begin{thm}\label{bound of forward anti}
Let $X=L^{\infty}(\Omega)$, $Y=C(\partial\Omega)$, and set $\nu_a:=\|\psi_0\|_{X}$. Then
\[
    |K_m|_{\infty}\leq \mu_a^m\,\nu_a.
\]
\end{thm}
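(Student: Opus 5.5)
The plan is to bound the nested integral in \eqref{antichiral def of km} by iterating the kernel estimate that defines $\mu_a$ in \eqref{def of mua}. This mirrors the proof of Theorem~\ref{bound of forward chiral}, with the operator norm of $T$ replaced by the integrated Green's function bound. Throughout, we use $\|\cdot\|_1$ as the matrix norm induced by the $\ell^1$ vector norm, so that $|Mv|_1\le\|M\|_1|v|_1$ pointwise. We also measure $\psi_0$ and the output in the supremum of the corresponding pointwise vector norm. The constant $\nu_a=\|\psi_0\|_X$ should be read as the sup norm of $\psi_0$ over $\Omega$; this is the natural reading, since $\psi_0$ is bounded there.

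We define the partial integrals recursively:
\[
u_m(\mathbf{x}_{m-1}) := \int_\Omega G(\mathbf{x}_{m-1},\mathbf{x}_m)V_m(\mathbf{x}_m)\psi_0(\mathbf{x}_m)\,d\mathbf{x}_m,
\]
\[
u_j(\mathbf{x}_{j-1}) := \int_\Omega G(\mathbf{x}_{j-1},\mathbf{x}_j)V_j(\mathbf{x}_j)u_{j+1}(\mathbf{x}_j)\,d\mathbf{x}_j,
\]
for $j=m-1,\dots,1$, with $\mathbf{x}_0=\mathbf{x}$. Then $K_m(V_1,\dots,V_m)=(-k)^m u_1(\mathbf{x})$.

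The key step is a one-line estimate. For any bounded $w$ on $\Omega$ and any $\mathbf{y}\in\Omega$,
\[
\Bigl|\int_\Omega G(\mathbf{y},\mathbf{x}')V_j(\mathbf{x}')w(\mathbf{x}')\,d\mathbf{x}'\Bigr|
\le \|V_j\|_{L^\infty}\sup_\Omega|w|\int_\Omega\|G(\mathbf{y},\mathbf{x}')\|_1\,d\mathbf{x}'
\le \frac{\mu_a}{k}\|V_j\|_{L^\infty}\sup_\Omega|w|.
\]
The integrability of $G$ used here is the logarithmic and $r^{-1}$ singularity argument already given in the well-posedness proof. Applying this bound inductively from $j=m$ down to $j=2$, with all evaluation points $\mathbf{x}_{j-1}$ lying in $\Omega$, gives
\[
\sup_\Omega|u_j|\le (\mu_a/k)^{m-j+1}\prod_{i\ge j}\|V_i\|_{L^\infty}\,\nu_a.
\]

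The last step, $j=1$, needs care, and I expect it to be the only real obstacle. Here $\mathbf{x}\in\partial\Omega$, whereas \eqref{def of mua} takes the supremum over $\mathbf{x}\in\Omega$. Since $\Omega$ is closed, $\partial\Omega\subset\Omega$, so the same bound applies directly. The remaining point is that the supremum is finite and attained uniformly up to the boundary. This holds because $\mathbf{x}\mapsto\int_\Omega\|G(\mathbf{x},\mathbf{x}')\|_1\,d\mathbf{x}'$ is continuous: the kernel is dominated by an integrable translate-invariant function of $|\mathbf{x}-\mathbf{x}'|$, so continuity follows from dominated convergence.

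Multiplying the resulting bound by $k^m$ cancels the factors $k^{-m}$ and yields
\[
\|K_m(V_1,\dots,V_m)\|_{C(\partial\Omega)}\le \mu_a^m\nu_a\prod_i\|V_i\|_X.
\]
Continuity of $K_m(V_1,\dots,V_m)$ on $\partial\Omega$, needed for it to lie in $Y$, follows from the same dominated convergence argument. Taking the supremum over nonzero $V_i$ gives $|K_m|_\infty\le\mu_a^m\nu_a$.
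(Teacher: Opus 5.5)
Your proof is correct and is exactly the argument the paper leaves implicit when it says the bound follows ``without difficulty'' from the definition of $\mu_a$. You peel off the nested integrals from the innermost one outward, each time using $\int_\Omega\|G(\mathbf{y},\mathbf{x}')\|_1\,d\mathbf{x}'\le\mu_a/k$ for $\mathbf{y}\in\Omega$, just as $\|T\|=1$ was used in the chiral case.

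Two minor remarks, neither of which affects the estimate. First, because $\partial\Omega\subset\Omega$, the final step needs only the definition of the supremum, so the continuity argument there is unnecessary. Second, for continuity of $K_m(V_1,\dots,V_m)$ on $\partial\Omega$ when $V_1$ is merely bounded, the clean justification is $L^1$-continuity of translations of the kernel, $\|G(\mathbf{x}+\mathbf{h},\cdot)-G(\mathbf{x},\cdot)\|_{L^1(\Omega)}\to 0$, rather than a direct dominated-convergence argument.
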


\subsection{Inverse Series}
The inverse problem is to recover the scattering potential $V$ from measurements. To this end, we employ the inverse Born series (IBS), whose definition and analysis are developed in \cite{hoskinsAnalysisInverseBorn2022}. The IBS reconstructs the potential by means of the expansion
\[
    \tilde{V}
    =
    \mathcal{K}_1(\psi_s)+\mathcal{K}_2(\psi_s)+\mathcal{K}_3(\psi_s)+\cdots,
\]
where the operators $\{\mathcal{K}_m\}_{m\ge1}$ are defined recursively by
\begin{align*}
& \mathcal{K}_1(\psi_s)=K_1^{+}(\psi_s), \\
& \mathcal{K}_2(\psi_s)=-\mathcal{K}_1\!\left(K_2\!\left(\mathcal{K}_1(\psi_s), \mathcal{K}_1(\psi_s)\right)\right), \\
& \mathcal{K}_m(\psi_s)=-\sum_{n=2}^m \sum_{i_1+\cdots+i_n=m}
\mathcal{K}_1 K_n\!\left(\mathcal{K}_{i_1}(\psi_s), \ldots, \mathcal{K}_{i_n}(\psi_s)\right).
\end{align*}
Here $K_1^{+}$ denotes the bounded inverse of $K_1$ when it exists and otherwise its pseudoinverse.

We next recall the radius of convergence and an error estimate for the IBS, based on \cite{hoskinsAnalysisInverseBorn2022}.

\begin{thm}\label{thm:ibs_convergence_error}
For the chiral model, set
\[
(X,\mu,\nu)=(L^{\infty}(\Omega),\mu_c,\nu_c),
\]
where $\mu_c,\nu_c$ are given in Theorem~\ref{bound of forward chiral}. For the
anti-chiral model, set
\[
(X,\mu,\nu)=(L^{\infty}(\Omega),\mu_a,\nu_a),
\]
where $\nu_a$ is given in Theorem~\ref{bound of forward anti}, and $\mu_a$ is
defined in \eqref{def of mua}. Define
\[
C := \max\left\{2,\ \|\mathcal{K}_1\|\,\nu\right\},
\qquad
r := \frac{1}{2\mu}\left(\sqrt{16C^2+1}-4C\right).
\]
If
\[
\|\mathcal{K}_1 \psi_s\|_{X} < r,
\]
then the inverse Born series associated with the chiral or anti-chiral model converges
in $X$.

Moreover, suppose that the corresponding Born series also converges. Let $\tilde{V}$
denote the sum of the inverse Born series and set
\[
V_1=\mathcal{K}_1\psi_s .
\]
Define
\[
\mathcal{M}:=\max\{\|V\|_X,\|\tilde{V}\|_X\},
\]
and assume that
\[
\mathcal{M}
<
\frac{1}{\mu}
\left(
1-\sqrt{\frac{\nu\|\mathcal{K}_1\|}{1+\nu\|\mathcal{K}_1\|}}
\right).
\]
Then the approximation error satisfies
\[
\begin{aligned}
\left\|V-\sum_{m=1}^N \mathcal{K}_m(\psi_s)\right\|_X
\leqslant\,
& M\left(\frac{\|V_1\|_X}{r}\right)^{N+1}
\frac{1}{1-\frac{\|V_1\|_X}{r}} \\
&+
\left(
1-\frac{\nu\|\mathcal{K}_1\|}{(1-\mu \mathcal{M})^2}
+\nu\|\mathcal{K}_1\|
\right)^{-1}
\left\|\left(I-\mathcal{K}_1 K_1\right)V\right\|_X,
\end{aligned}
\]
where
\[
M=\frac{2\mu}{\sqrt{16C^2+1}}.
\]
\end{thm}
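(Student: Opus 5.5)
The plan is to reduce the statement to the abstract convergence and stability theorems of \cite{hoskinsAnalysisInverseBorn2022}, which are formulated for a general forward series $\psi_s=\sum_m K_m(V,\dots,V)$ of bounded multilinear operators $K_m:X^m\to Y$ between Banach spaces satisfying a geometric bound $|K_m|_\infty\le \nu\mu^m$, together with a bounded linear map $\mathcal{K}_1:Y\to X$. Once the hypotheses are verified for each model, the conclusions follow with the constants $C$, $r$, $M$ exactly as in that reference. So we check the hypotheses model by model and then retrace the abstract argument only to confirm that the constants come out as stated.

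\emph{Step 1 (hypotheses).} For the chiral model, Theorem~\ref{bound of forward chiral} gives $|K_m|_\infty\le\mu_c^m\nu_c$ with $X=L^\infty(\Omega)$ and $Y=L^2([0,L_y])$. For the anti-chiral model, Theorem~\ref{bound of forward anti} gives $|K_m|_\infty\le\mu_a^m\nu_a$ with $Y=C(\partial\Omega)$. In both cases $\mathcal{K}_1=K_1^+$ is assumed bounded (regularized pseudoinverse), so $\|\mathcal{K}_1\|<\infty$.

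\emph{Step 2 (convergence).} We bound the inverse operators $\mathcal{K}_m$ inductively. From the recursion,
\[
\|\mathcal{K}_m(\psi_s)\|_X\le \|\mathcal{K}_1\|\nu\sum_{n=2}^m\mu^n\sum_{i_1+\cdots+i_n=m}\prod_j\|\mathcal{K}_{i_j}(\psi_s)\|_X .
\]
Let $a_m$ be the majorant sequence satisfying this recursion with equality, where $C\ge\|\mathcal{K}_1\|\nu$ replaces $\|\mathcal{K}_1\|\nu$ and $a_1=\|\mathcal{K}_1\psi_s\|_X$. Its generating function $f(t)=\sum a_mt^m$ satisfies the algebraic relation
\[
f=a_1t+C\frac{(\mu f)^2}{1-\mu f}.
\]
Solving the quadratic, the branch point determines the radius, which yields the condition $a_1<r$ with $r=\frac{1}{2\mu}(\sqrt{16C^2+1}-4C)$. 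The same computation gives the tail bound $a_m\le M (a_1/r)^m$ with $M=2\mu/\sqrt{16C^2+1}$, up to the normalization used in \cite{hoskinsAnalysisInverseBorn2022}. The requirement $C\ge2$ is a technical choice made there to make the tail estimate clean. This gives convergence in $X$ and, by summing the geometric tail, the first term of the error bound.

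\emph{Step 3 (stability).} Write $V-\tilde V$. Apply $\mathcal{K}_1$ to the forward series for $V$ and to the identity satisfied by $\tilde V$; composing with the forward series shows that $\tilde V$ satisfies $\mathcal{K}_1\sum_m K_m(\tilde V,\dots)=\mathcal{K}_1\psi_s$. Subtracting and using multilinearity, each difference $K_m(V,\dots,V)-K_m(\tilde V,\dots,\tilde V)$ telescopes into $m$ terms, each of norm at most $\nu\mu^m\mathcal{M}^{m-1}\|V-\tilde V\|$. Summing over $m\ge2$ gives the factor $\nu\|\mathcal{K}_1\|\bigl((1-\mu\mathcal{M})^{-2}-1\bigr)$. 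Isolating $\|V-\tilde V\|_X$ gives the second term, with the coefficient $\bigl(1-\frac{\nu\|\mathcal{K}_1\|}{(1-\mu\mathcal{M})^2}+\nu\|\mathcal{K}_1\|\bigr)^{-1}$ multiplying $\|(I-\mathcal{K}_1K_1)V\|_X$. The smallness assumption on $\mathcal{M}$ is exactly what makes this coefficient positive. The triangle inequality combining Steps 2 and 3 gives the stated estimate.

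The main obstacle is Step 2: identifying the correct branch of the quadratic and getting the sharp constants $r$ and $M$. Here I would follow the majorant computation of \cite{hoskinsAnalysisInverseBorn2022} verbatim rather than re-derive it.
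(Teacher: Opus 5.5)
Your top-level reduction is what the paper does: it gives no argument beyond invoking \cite{hoskinsAnalysisInverseBorn2022} with the bounds $|K_m|_\infty\le\nu\mu^m$ of Theorems~\ref{bound of forward chiral} and~\ref{bound of forward anti}. The problem is the retracing you offer as confirmation that the constants come out as displayed, which fails in two places.

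\textbf{The equation satisfied by $\tilde V$ (Step~3).} You misidentify it. The recursion for $\mathcal{K}_m$ inverts the map $V\mapsto V+\sum_{m\ge2}\mathcal{K}_1K_m(V,\ldots,V)$. Its linear part is the identity, not $\mathcal{K}_1K_1$. Collecting terms of equal degree (legitimate where everything converges absolutely) gives
\[
\tilde V+\sum_{m\ge2}\mathcal{K}_1K_m(\tilde V,\ldots,\tilde V)=\mathcal{K}_1\psi_s .
\]
Subtract this from $\mathcal{K}_1K_1V+\sum_{m\ge2}\mathcal{K}_1K_m(V,\ldots,V)=\mathcal{K}_1\psi_s$. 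You get $V-\tilde V=(I-\mathcal{K}_1K_1)V-\sum_{m\ge2}\mathcal{K}_1\bigl[K_m(V,\ldots,V)-K_m(\tilde V,\ldots,\tilde V)\bigr]$, and this is where $\|(I-\mathcal{K}_1K_1)V\|_X$ comes from.

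Your identity $\mathcal{K}_1\sum_{m\ge1}K_m(\tilde V,\ldots,\tilde V)=\mathcal{K}_1\psi_s$ agrees with this only if $\mathcal{K}_1K_1\tilde V=\tilde V$. That holds when $\mathcal{K}_1K_1\mathcal{K}_1=\mathcal{K}_1$, since the correct identity puts $\tilde V$ in the range of $\mathcal{K}_1$. It fails for a Tikhonov-regularized inverse like the one you invoke in Step~1. Otherwise subtraction leaves $\mathcal{K}_1K_1(V-\tilde V)$ on the left. That cannot be solved for $V-\tilde V$ in exactly the case $\mathcal{K}_1K_1\neq I$ that the second error term addresses.

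\textbf{The constants.} One has $\sum_{m\ge2}m\,\nu\mu^m\mathcal{M}^{m-1}=\nu\mu\bigl((1-\mu\mathcal{M})^{-2}-1\bigr)$, not $\nu\bigl((1-\mu\mathcal{M})^{-2}-1\bigr)$. Likewise in Step~2, the substitution $g=\mu f$, $s=\mu a_1t$ turns your majorant into $g=s+\mu C\,g^2/(1-g)$. So with the normalization $|K_m|_\infty\le\nu\mu^m$ that you actually verified, the argument produces $\nu\mu\|\mathcal{K}_1\|$ wherever the statement shows $\nu\|\mathcal{K}_1\|$: in $C$, in the bound on $\mathcal{M}$, and in the coefficient.

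This is forced, not cosmetic. Rescaling the norm of $X$ by $\lambda$ sends $\mu\mapsto\mu/\lambda$ and $\|\mathcal{K}_1\|\mapsto\lambda\|\mathcal{K}_1\|$. The abstract argument works for any norm, so every constant it outputs must be invariant; $\nu\mu\|\mathcal{K}_1\|$ is, $\nu\|\mathcal{K}_1\|$ is not. Reconciling this with the displayed formulas is the real content of ``verifying the hypotheses,'' and your arithmetic slip hides it. For instance, a cited hypothesis of the form $|K_m|\le\nu'\mu^{m-1}$ reproduces them with $\nu'=\nu\mu$.

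Nor does your quadratic produce the stated $r$: $(1+c)g^2-(1+s)g+s=0$ branches at $s=2c+1-2\sqrt{c(c+1)}$. Take $c=C=\max\{2,\nu\mu\|\mathcal{K}_1\|\}$. Then the stated $\mu r=\tfrac12\bigl(\sqrt{16C^2+1}-4C\bigr)$ lies below this branch point, so convergence follows a fortiori. With the literal $C$ and $c=\mu C$ it need not: $C=2$, $\mu=4$ gives $\mu r\approx0.0311$ but a branch point at $s\approx0.0294$. The values of $r$ and $M$ must be imported from the reference, not presented as the output of this computation.
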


We also consider the reduced inverse Born series (RIBS), defined by
\begin{align*}
& \mathcal{K}^r_1(\psi_s)=K_1^{+}(\psi_s), \\
& \mathcal{K}^r_2(\psi_s)=-\mathcal{K}^r_1\!\left(K_2\!\left(\mathcal{K}^r_1(\psi_s), \mathcal{K}^r_1(\psi_s)\right)\right), \\
& \mathcal{K}^r_m(\psi_s)= - \mathcal{K}^r_1 K_2\!\left(\mathcal{K}_{m-1}^r(\psi_s), \mathcal{K}^r_1(\psi_s)\right).
\end{align*}
The RIBS was introduced in \cite{2022reduced}, where cancellations among IBS terms were proved in settings with a single source or a single detector. Although such cancellations are not generally expected for multiple sources or detectors, our numerical experiments indicate that a similar cancellation phenomenon can still be observed.

\section{Numerical methods}
In this section, we detail the numerical algorithms developed for the forward and inverse problems.

\subsection{Forward solver for the chiral model}

To solve \eqref{chiral case}, we discretize the space-like variable $y$
using a finite-difference scheme and treat the chiral Dirac equation as an
evolution problem in the time-like variable $x$.
The interval $(y_{\min},y_{\max})$ is discretized with a uniform grid
\[
y_j = y_{\min} + j\,\Delta y, \quad j=0,\dots,N_y-1,
\]
where $\Delta y=(y_{\max}-y_{\min})/(N_y-1)$.
We impose homogeneous boundary conditions on different components at
opposite boundaries, namely
\[
\psi_1(x,y_0)=0,
\quad
\psi_2(x,y_{N_y-1})=0.
\]
For each fixed $x$, the discrete field is represented by stacking all
interior values of the first component followed by those of the second
component,
\[
\psi(x)
=
\begin{pmatrix}
\psi_1(x)\\
\psi_2(x)
\end{pmatrix}
\in \mathbb C^{2(N_y-1)},
\]
where
\[
\psi_1(x)
=
\big(\psi_1(x,y_1),\dots,\psi_1(x,y_{N_y-1})\big)^{T},\,
\psi_2(x)
=
\big(\psi_2(x,y_0),\dots,\psi_2(x,y_{N_y-2})\big)^{T}.
\]

The differential operator
$A=\alpha\,\partial_y-\mathrm{i}k\beta$
is approximated on the interior grid by a sparse matrix
$L\in\mathbb C^{2(N_y-1)\times 2(N_y-1)}$.
In block form, the discrete operator reads
\begin{equation*}
L
=
\begin{pmatrix}
-\mathrm{i} kI & D^- \\
- D^+ & \mathrm{i} kI
\end{pmatrix},
\end{equation*}
where $I$ denotes the identity matrix of size $(N_y-1)\times(N_y-1)$.
The matrices $D^-$ and $D^+$ are first-order, finite-difference operators
approximating the derivative $\partial_y$.
Specifically, $D^-$ is a backward difference operator acting on $\psi_2$,
while $D^+$ is a forward difference operator acting on $\psi_1$:
\[
(D^- v)_j = \frac{v_j - v_{j-1}}{\Delta y},
\quad
(D^+ v)_j = \frac{v_{j+1} - v_j}{\Delta y},
\quad j=1,\dots,N_y-2.
\]
To propagate the solution in the direction $x$, we employ a
Crank--Nicolson marching scheme.
Let $\psi^i$ denote the approximation at $x_i=i\,\Delta x$.
The homogeneous propagation step is of the form
\begin{equation*}
\bigl(I-\tfrac{\Delta x}{2}L\bigr)\psi^{i+1}
=
\bigl(I+\tfrac{\Delta x}{2}L\bigr)\psi^{i}.
\end{equation*}
This scheme is second-order accurate in $\Delta x$.

For scattering equations, the potential term
$-\mathrm{i}\,V(x,y)\psi$ is incorporated through a source term:
\begin{equation*}
\bigl(I-\tfrac{\Delta x}{2}L\bigr)\psi^{i+1}
=
\bigl(I+\tfrac{\Delta x}{2}L\bigr)\psi^{i}
+
\Delta x\bigl(-\mathrm{i}\,k\,V^i\,\psi^{i}\bigr),
\end{equation*}
where $V^i$ denotes the discrete potential evaluated at $x=x_i$.
At each marching step, the sparse linear system with coefficient matrix
$I-\tfrac{\Delta x}{2}L$ is solved using a precomputed sparse LU factorization.

\subsection{Forward solver for the anti-chiral model}
We now reformulate the Lippmann--Schwinger equation in terms an auxiliary spinor field $\sigma$. Following standard procedures, the scattered field is represented as
\[
\psi_s(\mathbf{x})
=
\int_{\mathbb{R}^2}
G(\mathbf{x},\mathbf{x}^{\prime})
\,\sigma(\mathbf{x}^{\prime})
\,d\mathbf{x}^{\prime}.
\]
Substituting the above into Eq.~\eqref{s antichiral case} and using $\psi=\psi_0+\psi_s$, we obtain an integral equation for $\sigma$:
\begin{equation}\label{sigma_equation}
\sigma(\mathbf{x})
+
k\,V(\mathbf{x})
\int_{\Omega}
G(\mathbf{x},\mathbf{x}^{\prime})
\,\sigma(\mathbf{x}^{\prime})
\,d\mathbf{x}^{\prime}
=
-\,k\,V(\mathbf{x})\,\psi_0(\mathbf{x}),
\quad
\mathbf{x}\in\Omega.
\end{equation}
Since $V$ is supported in $\Omega$, it follows that the unknown $\sigma$ is also supported in $\Omega$. Evidently, it is thus possible to reduce the computational domain to $\Omega$ instead of all of space. The linear system associated with \eqref{sigma_equation} is solved using the generalized minimal residual (GMRES) method. To be more specific, we introduce a uniform Cartesian grid on a rectangular box
containing $\Omega$. Let
\[
\mathbf{x}_{ij}=(x_i,y_j),\qquad i,j=1,\ldots,N,
\]
denote the grid points, with mesh size $h$ and cell area $h^2$. The auxiliary field is
approximated by its grid values
\[
\sigma_{ij}\approx \sigma(\mathbf{x}_{ij}).
\]
Using a quadrature-based discretization of the volume integral, we obtain 
\[
\sigma_{ij}
+
k\,V_{ij}
\sum_{m,n}
G(\mathbf{x}_{ij}-\mathbf{x}_{mn})\,\sigma_{mn}\,h^2
=
-\,k\,V_{ij}\psi_0(\mathbf{x}_{ij}),
\]
where $V_{ij}=V(\mathbf{x}_{ij})$. In matrix form, this system can be written as
\[
(I+kVG_h)\sigma=-kV\psi_0,
\]
where $V$ is the diagonal matrix formed by the values of the potential on the grid, and
$G_h$ denotes the discretized Green's convolution operator. Moreover, by translation invariance,
$G(\mathbf{x},\mathbf{x}^{\prime})$ depends only on $\mathbf{x}-\mathbf{x}^{\prime}$,
which means that the integral operator is of convolution type. After discretization, the resulting matrix $G_h$ has Toeplitz structure, enabling the use of fast matrix--vector products by means of the FFT. 

\subsection{Inverse solver}
To construct the pseudoinverse of the leading-order forward operator $K_1$, we employ a conjugate-gradient (CG) method applied to a Tikhonov-regularized normal equation. The IBS and RIBS reconstructions are then assembled by evaluating the corresponding inverse series terms up to a prescribed order.

\begin{algorithm}[H]
\caption{Inverse Solver using  the Inverse Born Series}
\label{alg:ibs}
\begin{algorithmic}
\Require Data $\psi_s$, number of terms $m$
\Ensure Reconstruction of scattering potential $n$

\Function{IBS}{}
  \State $n_1 \gets \mathcal{K}_1(\psi_s)$
  \For{$j=2$ to $m$}
    \State $b_j \gets \displaystyle \sum_{l=2}^{m}\ \sum_{i_1+\cdots+i_l=m}
      K_l\!\bigl(V_{i_1},\cdots, V_{i_l}\bigr)$
    \State $n_j \gets -\mathcal{K}_1(b_j)$
  \EndFor
  \State \Return $V_1+\cdots+V_m$
\EndFunction

\Statex
\end{algorithmic}
\end{algorithm}
\begin{algorithm}[H]
\caption{Inverse Solver using the Reduced Inverse Born Series}
\label{alg:ribs}
\begin{algorithmic}
\Require Data $\psi_s$, number of terms $m$
\Ensure Reconstruction of scattering potential $n$

\Function{IBS}{}
  \State $n_1 \gets \mathcal{K}_1(\psi_s)$
  \For{$j=2$ to $m$}
    \State $b_j \gets 
      K_2\!\bigl(n_{j-1},n_1\bigr)$
    \State $n_j \gets -\mathcal{K}_1(b_j)$
  \EndFor
  \State \Return $V_1+\cdots+V_m$
\EndFunction

\Statex
\end{algorithmic}
\end{algorithm}

\section{Numerical Results}
In this section, we present the results of our numerical experiments.

\begin{figure}[htbp]
\centering
\begin{subfigure}[b]{0.9\textwidth}
\includegraphics[width=\textwidth]{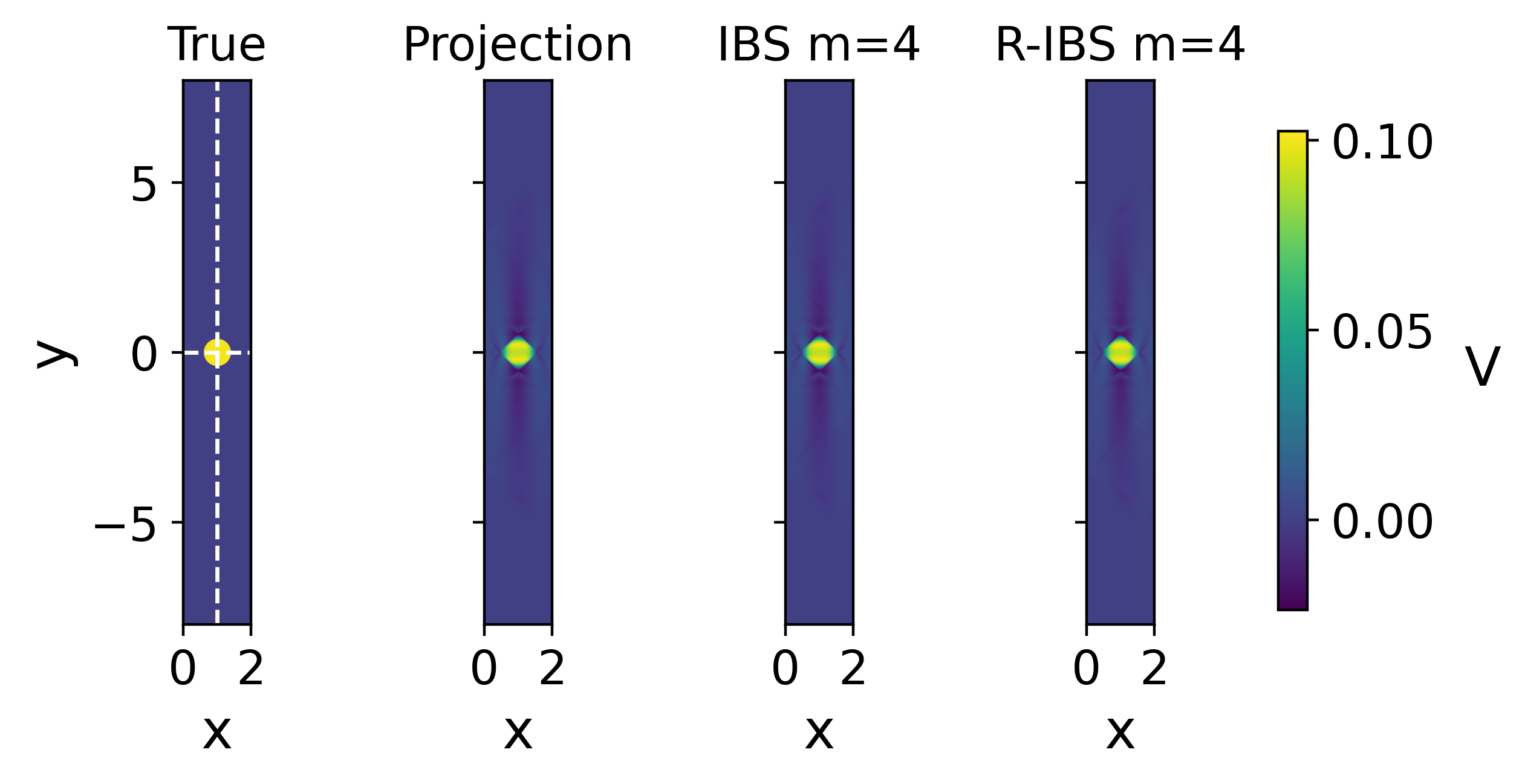}
\caption{Reconstructions of $V$}
\label{fig:chiral-disk-low-global}
\end{subfigure}

\vspace{0.0em}

\begin{subfigure}[b]{0.43\textwidth}
\includegraphics[width=\textwidth]{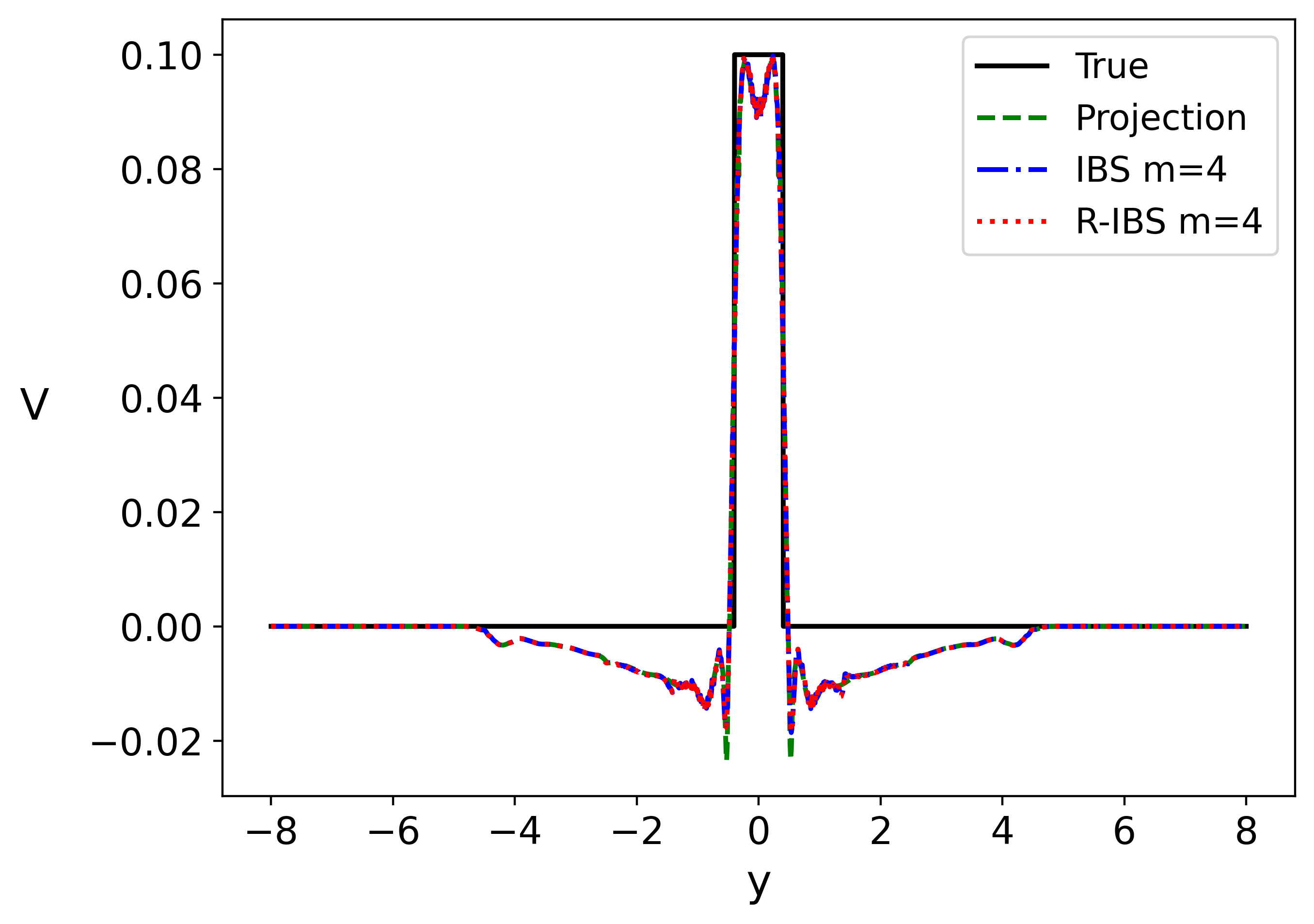}
\caption{Cross section at $x=1.0$}
\label{fig:chiral-disk-low-slice-x}
\end{subfigure}\hfill
\begin{subfigure}[b]{0.43\textwidth}
\includegraphics[width=\textwidth]{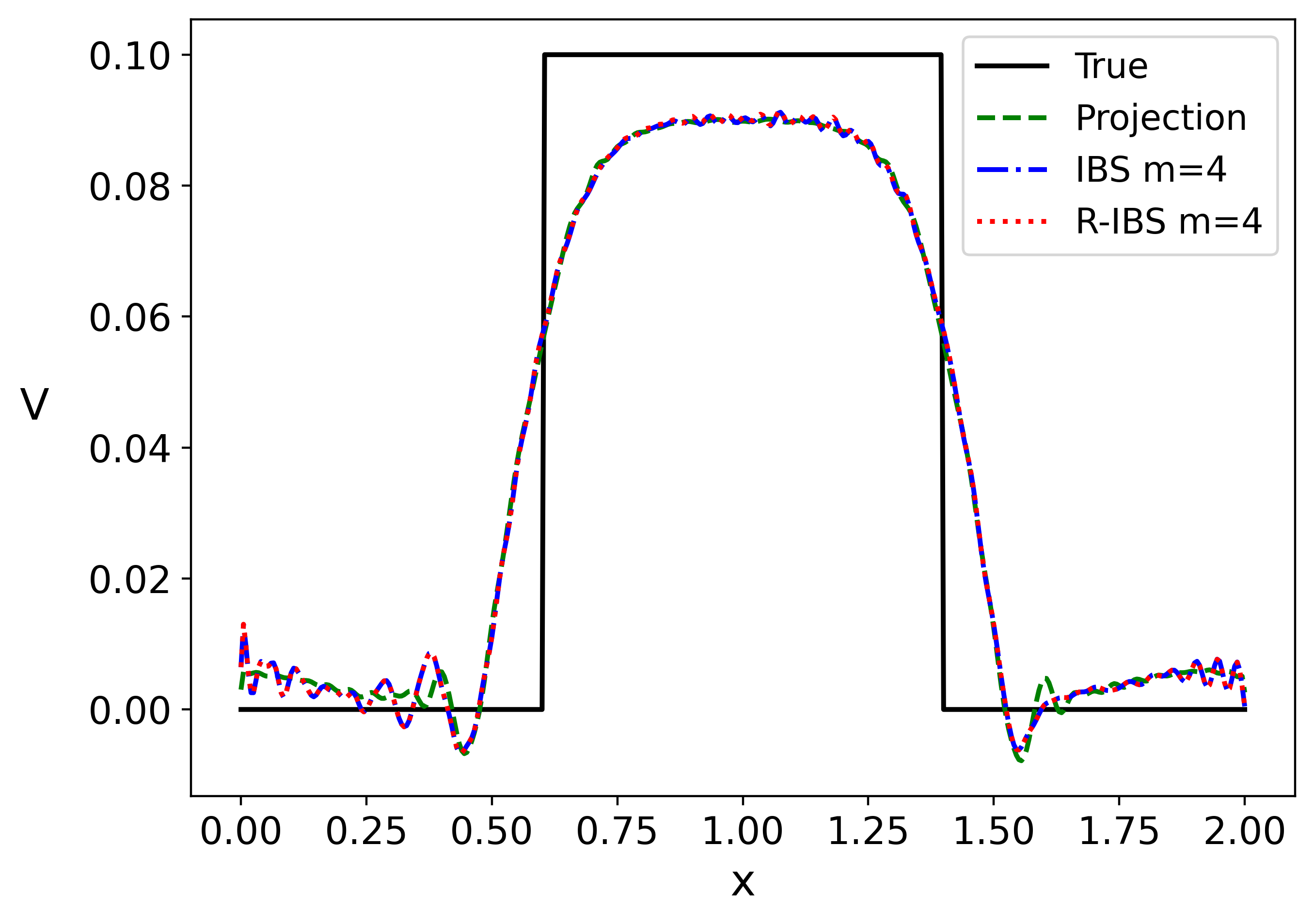}
\caption{Cross section at $y=0.0$}
\label{fig:chiral-disk-low-slice-y}
\end{subfigure}

\vspace{0.0em}

\begin{subfigure}[b]{0.43\textwidth}
\includegraphics[width=\textwidth]{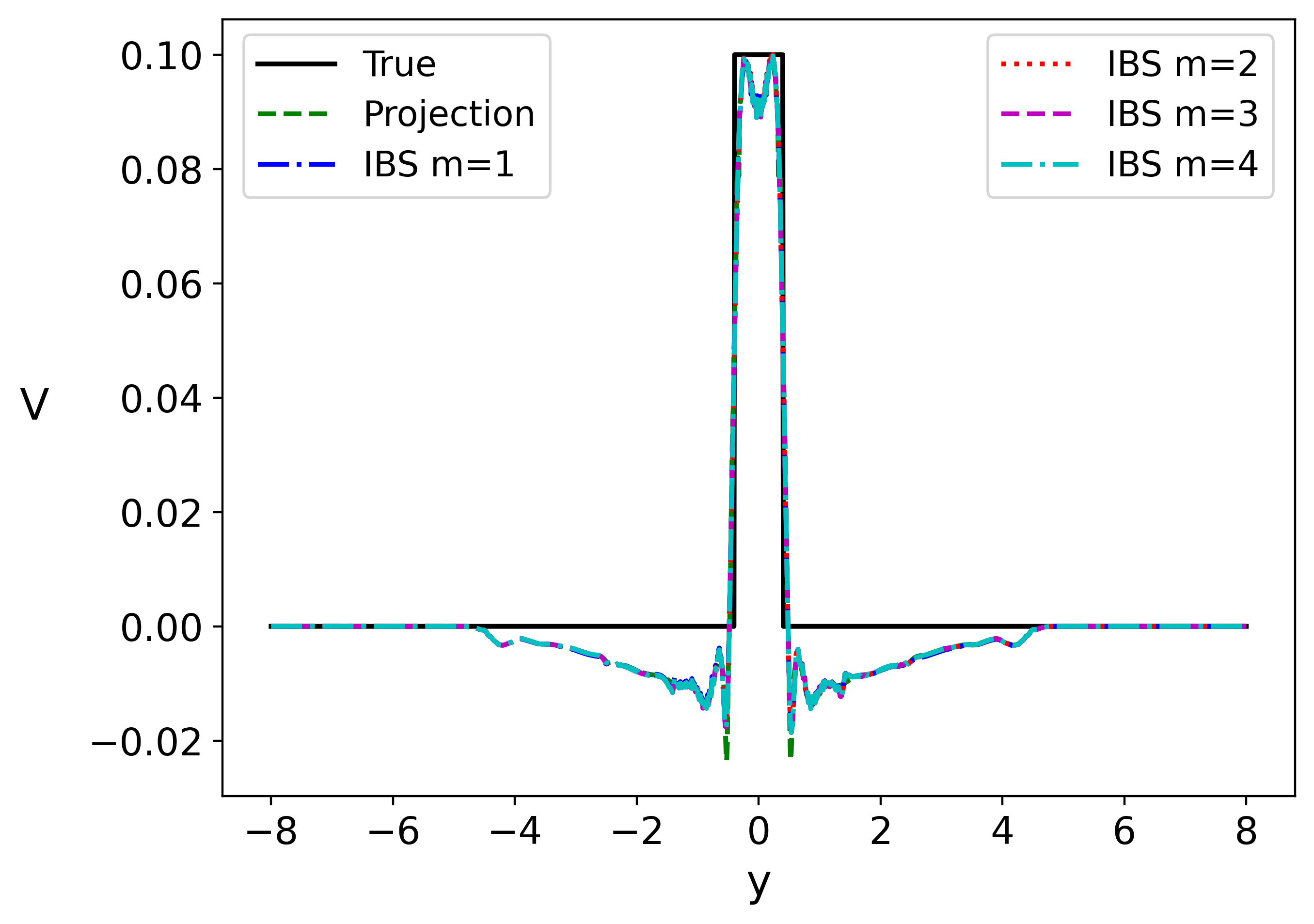}
\caption{IBS cross section at $x=1.0$}
\label{fig:chiral-disk-low-ibs-x}
\end{subfigure}\hfill
\begin{subfigure}[b]{0.43\textwidth}
\includegraphics[width=\textwidth]{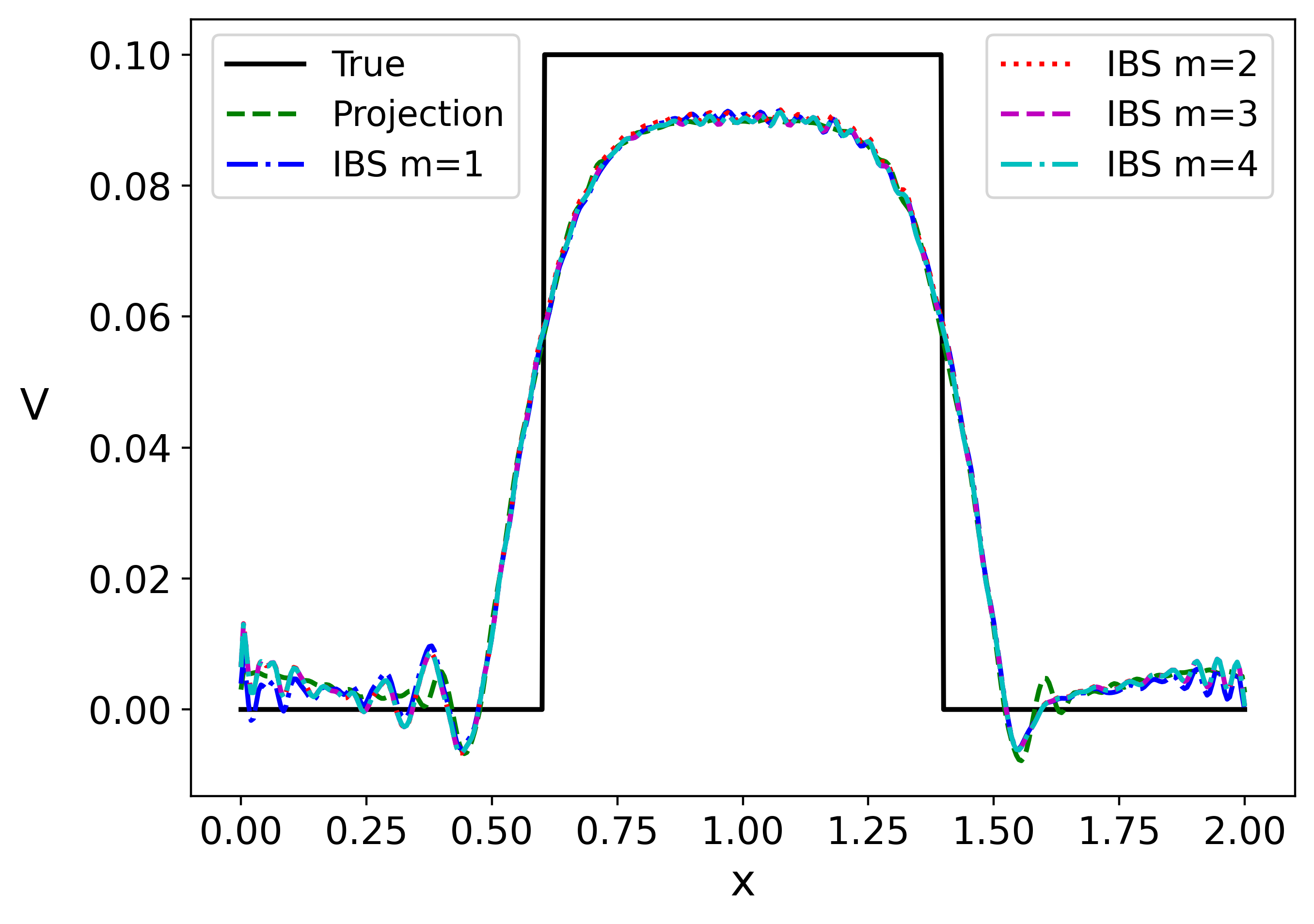}
\caption{IBS cross section at $y=0.0$}
\label{fig:chiral-disk-low-ibs-y}
\end{subfigure}

\scriptsize
\setlength{\tabcolsep}{3pt}
\begin{tabular}{@{}l|ccccc|cccc@{}}
\toprule
 & Projection & IBS1 & IBS2 & IBS3 & IBS4 &
 RIBS1 & RIBS2 & RIBS3 & RIBS4 \\
\midrule
Relative error &
0.3639 &   
0.3750 &   
0.3724 &   
0.3728 &   
0.3728 &   
0.3750 &   
0.3724 &   
0.3727 &   
0.3727 \\  
\bottomrule
\end{tabular}

\caption{Reconstructions of low contrast disk (chiral model)}
\label{fig:chiral-disk-low}
\end{figure}
\begin{figure}[htbp]
\centering
\begin{subfigure}[b]{0.9\textwidth}
\includegraphics[width=\textwidth]{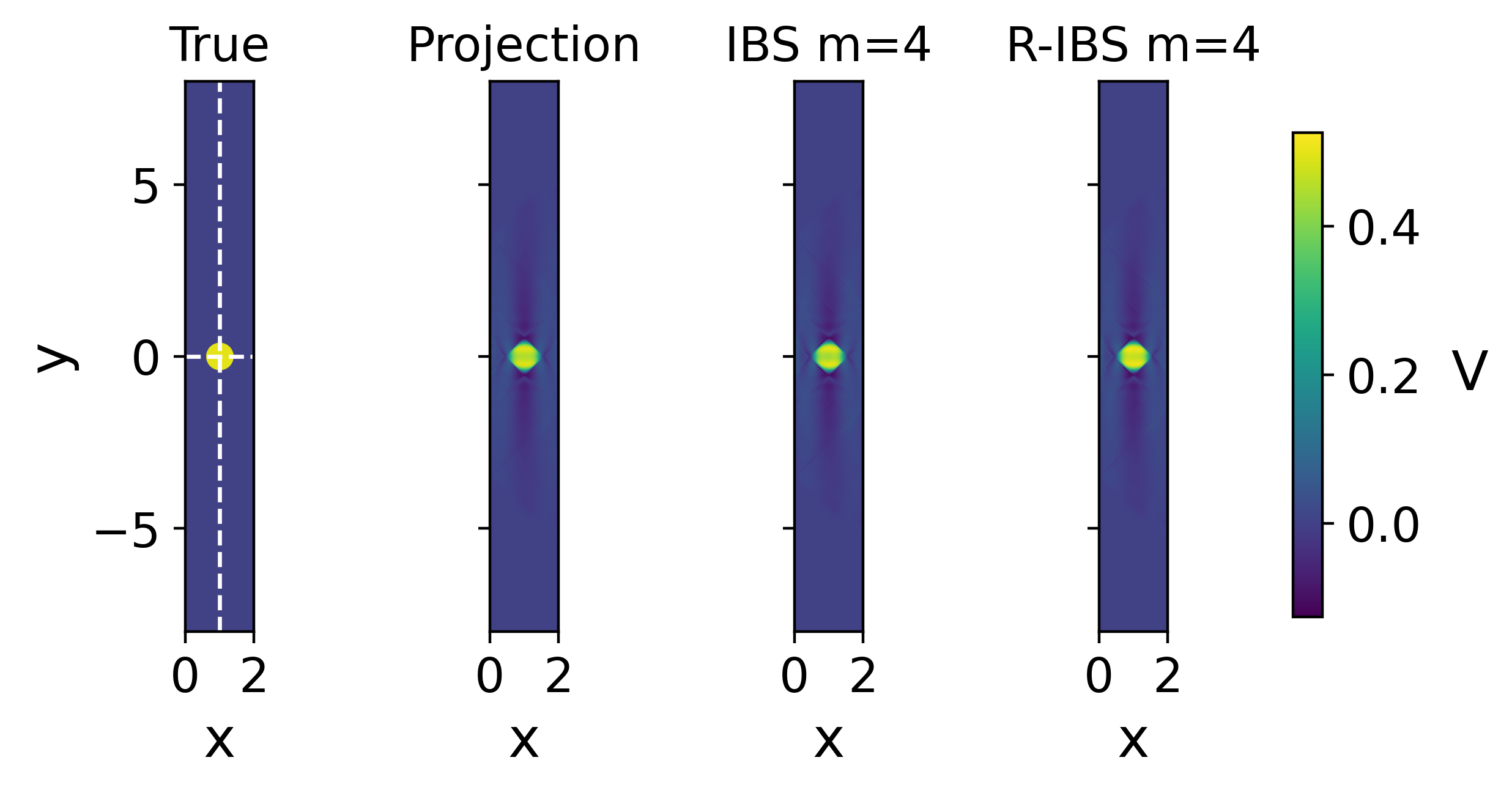}
\caption{Reconstructions of $V$}
\label{fig:chiral-disk-mid-global}
\end{subfigure}

\vspace{0.0em}

\begin{subfigure}[b]{0.43\textwidth}
\includegraphics[width=\textwidth]{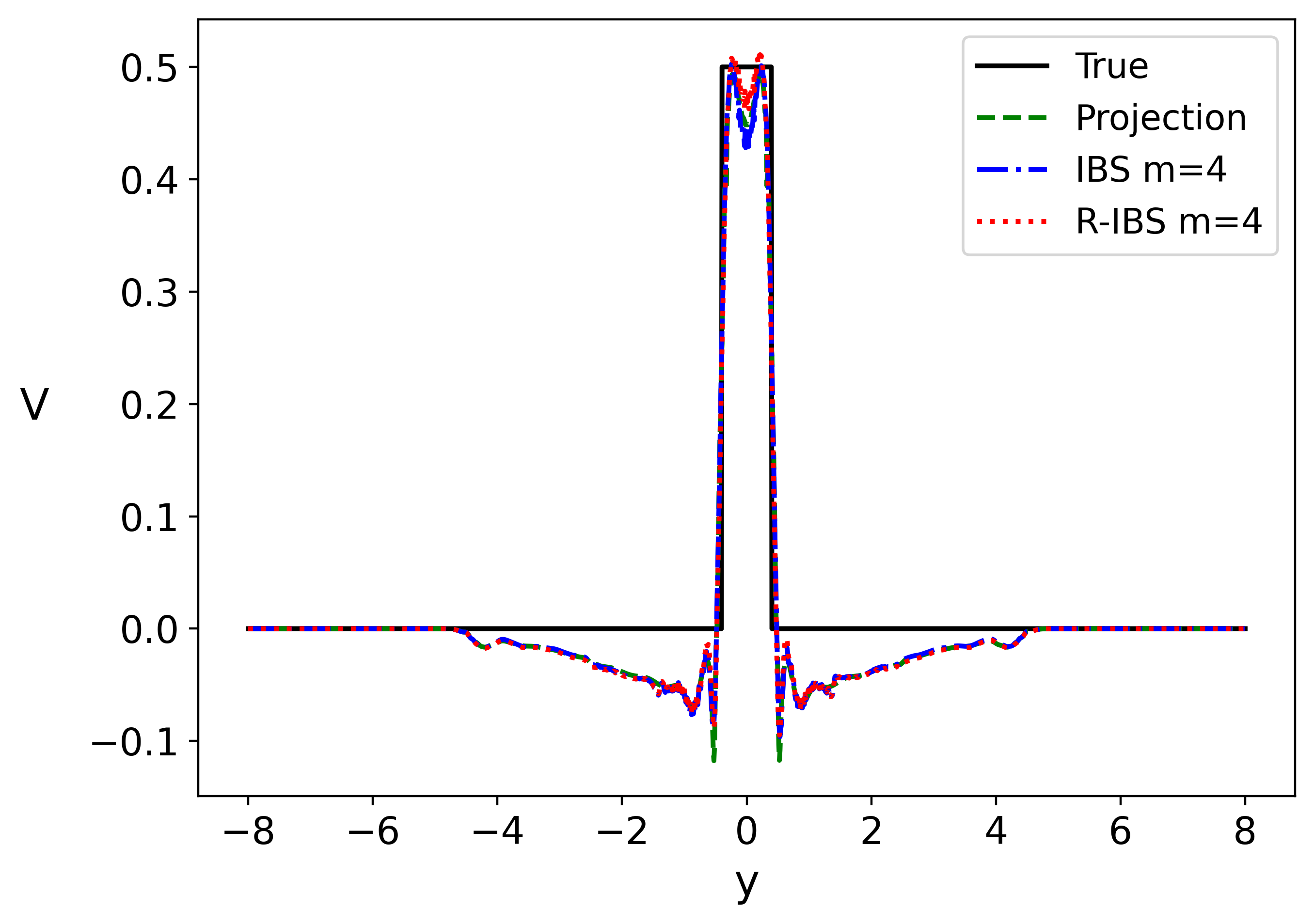}
\caption{Cross section at $x=1.0$}
\label{fig:chiral-disk-mid-slice-x}
\end{subfigure}\hfill
\begin{subfigure}[b]{0.43\textwidth}
\includegraphics[width=\textwidth]{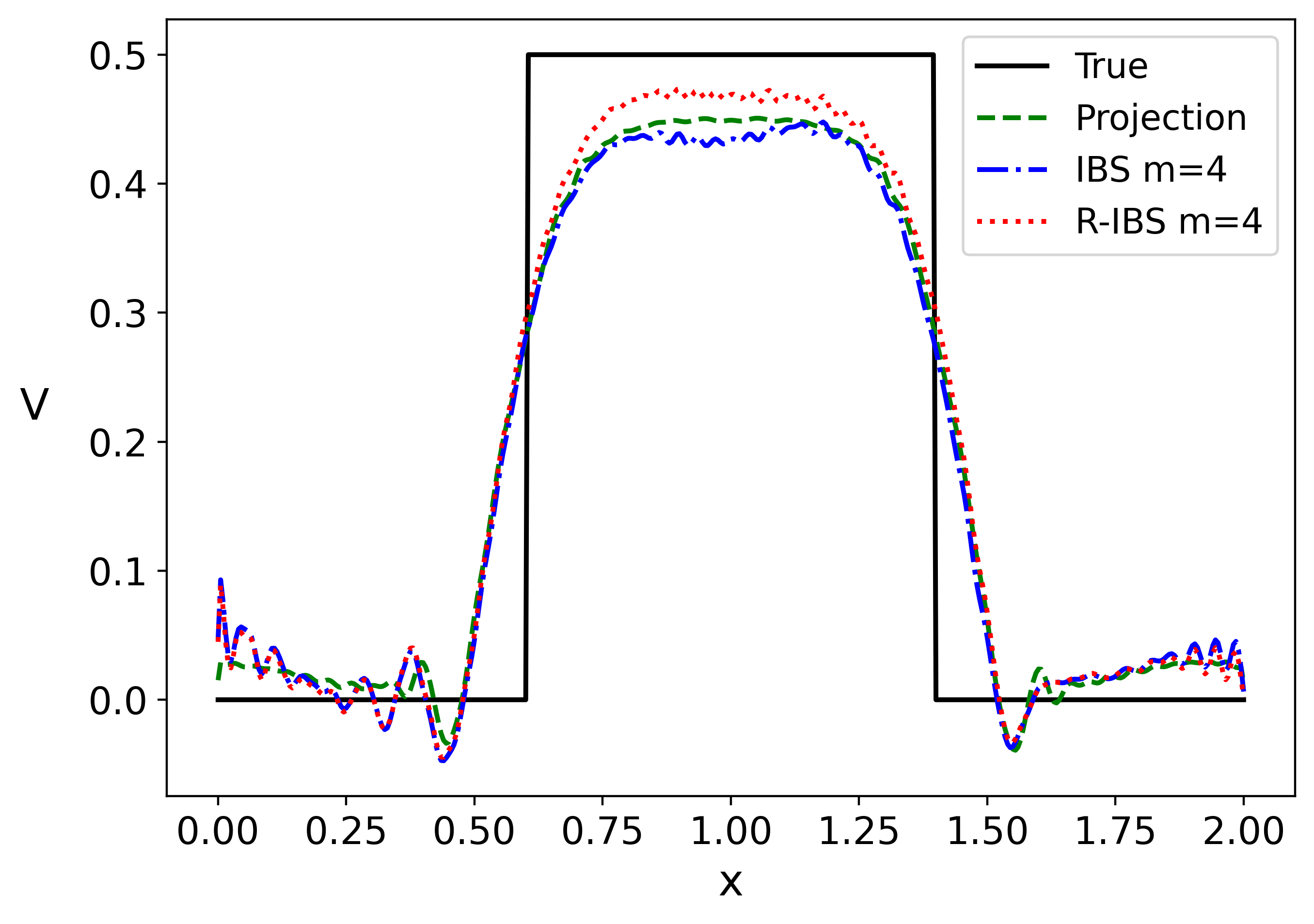}
\caption{Cross section at $y=0.0$}
\label{fig:chiral-disk-mid-slice-y}
\end{subfigure}

\vspace{0.0em}

\begin{subfigure}[b]{0.43\textwidth}
\includegraphics[width=\textwidth]{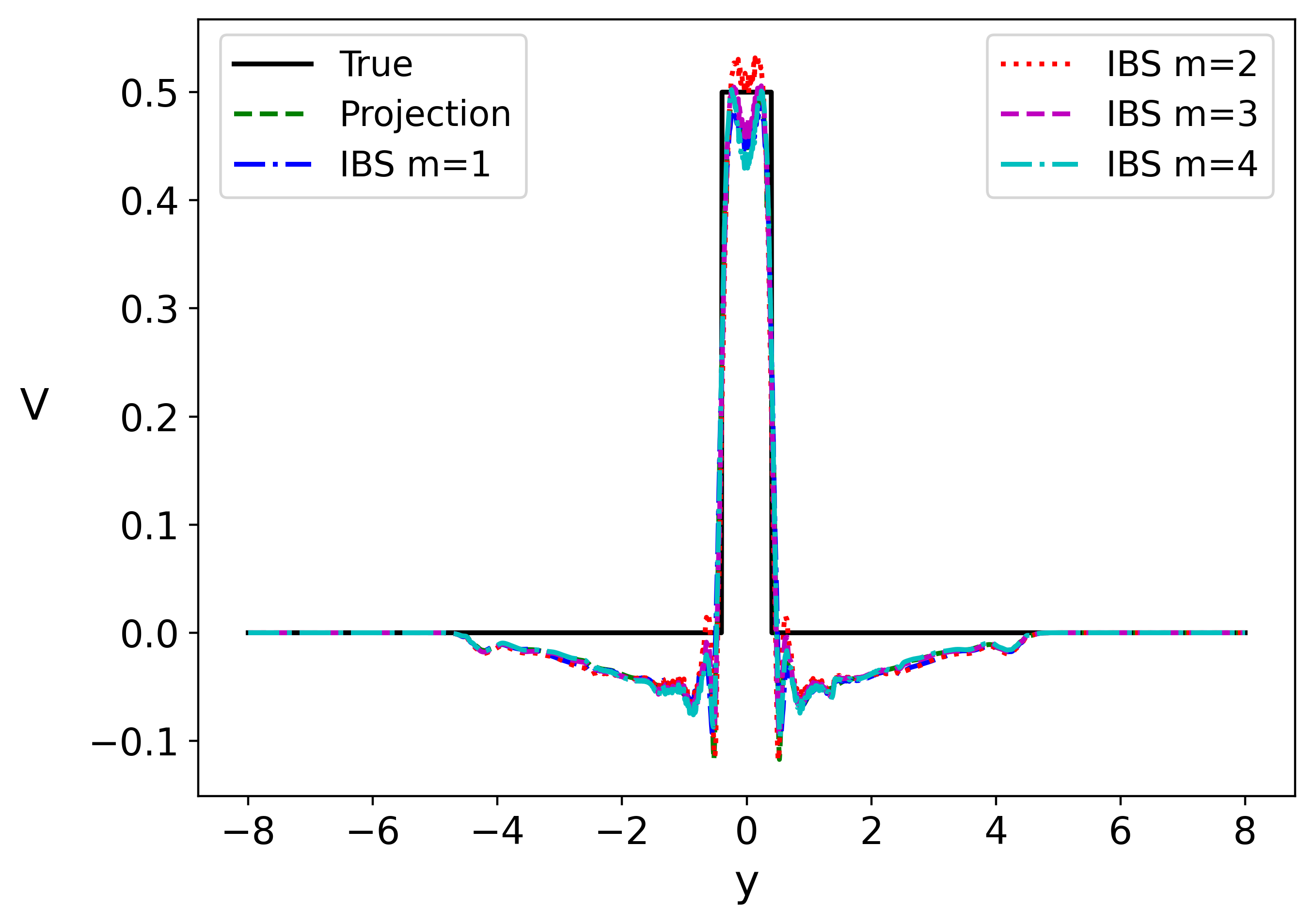}
\caption{IBS cross section at $x=1.0$}
\label{fig:chiral-disk-mid-ibs-x}
\end{subfigure}\hfill
\begin{subfigure}[b]{0.43\textwidth}
\includegraphics[width=\textwidth]{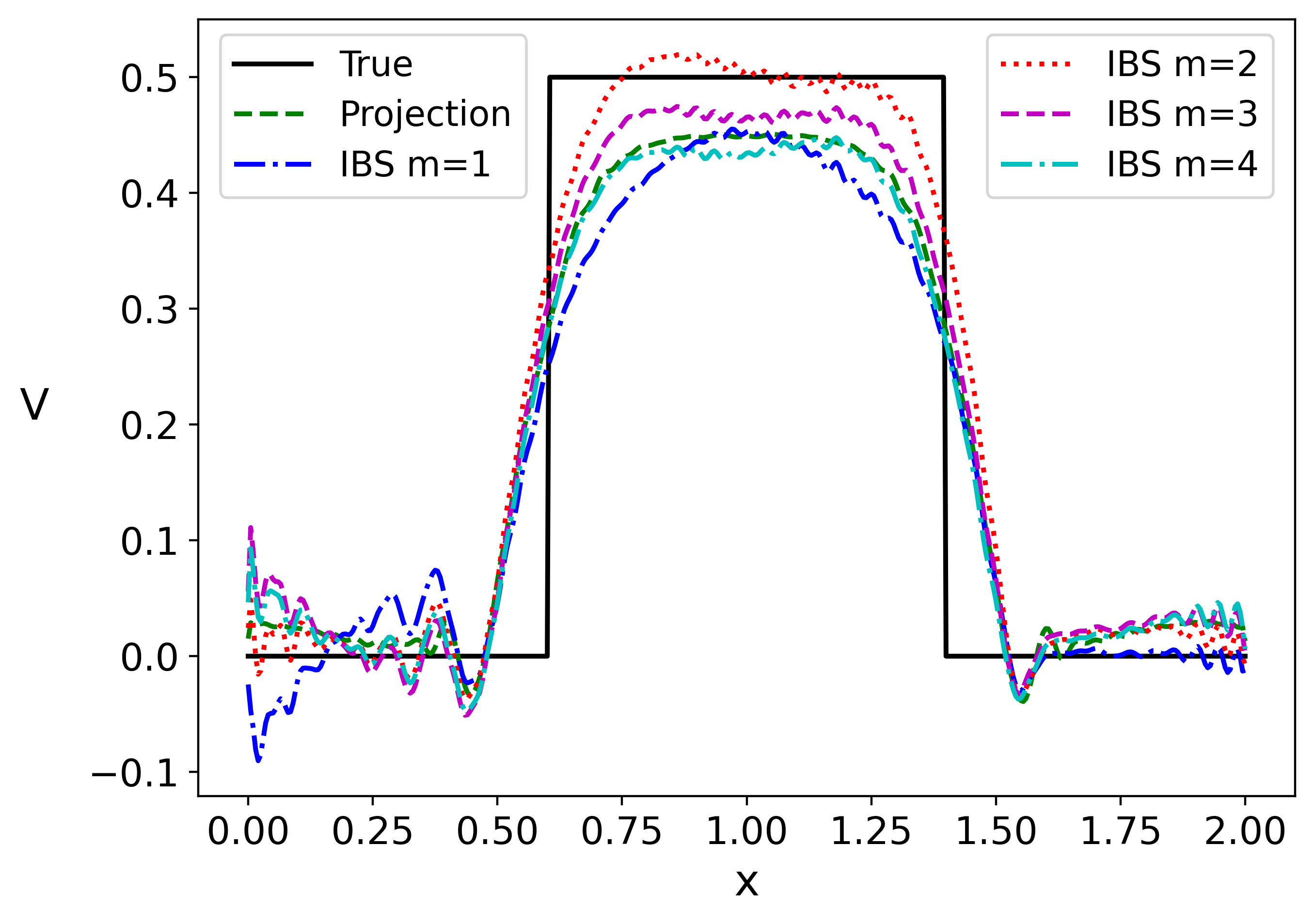}
\caption{IBS cross section at $y=0.0$}
\label{fig:chiral-disk-mid-ibs-y}
\end{subfigure}

\scriptsize
\setlength{\tabcolsep}{3pt}
\begin{tabular}{@{}l|ccccc|cccc@{}}
\toprule
 & Projection & IBS1 & IBS2 & IBS3 & IBS4 &
 RIBS1 & RIBS2 & RIBS3 & RIBS4 \\
\midrule
Relative error &
0.3639 &   
0.4801 &   
0.3916 &   
0.3831 &   
0.3812 &   
0.4801 &   
0.3916 &   
0.3796 &   
0.3752 \\  
\bottomrule
\end{tabular}

\caption{Reconstructions of medium contrast disk (chiral model)}
\label{fig:chiral-disk-mid}
\end{figure}
\begin{figure}[htbp]
\centering
\begin{subfigure}[b]{0.9\textwidth}
\includegraphics[width=\textwidth]{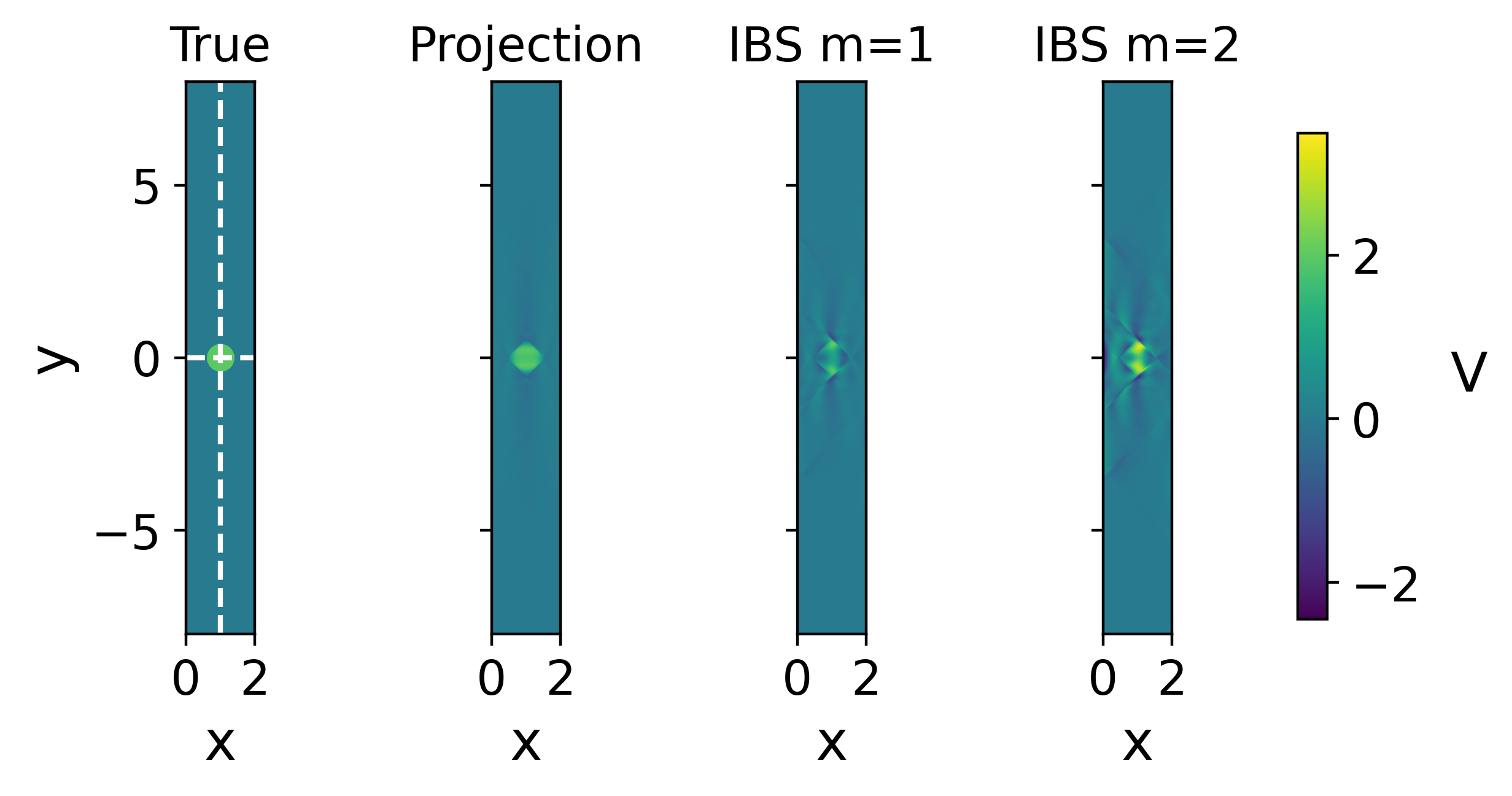}
\caption{Reconstructions of $V$}
\label{fig:chiral-disk-high-global}
\end{subfigure}

\vspace{0.6em}

\begin{subfigure}[b]{0.45\textwidth}
\includegraphics[width=\textwidth]{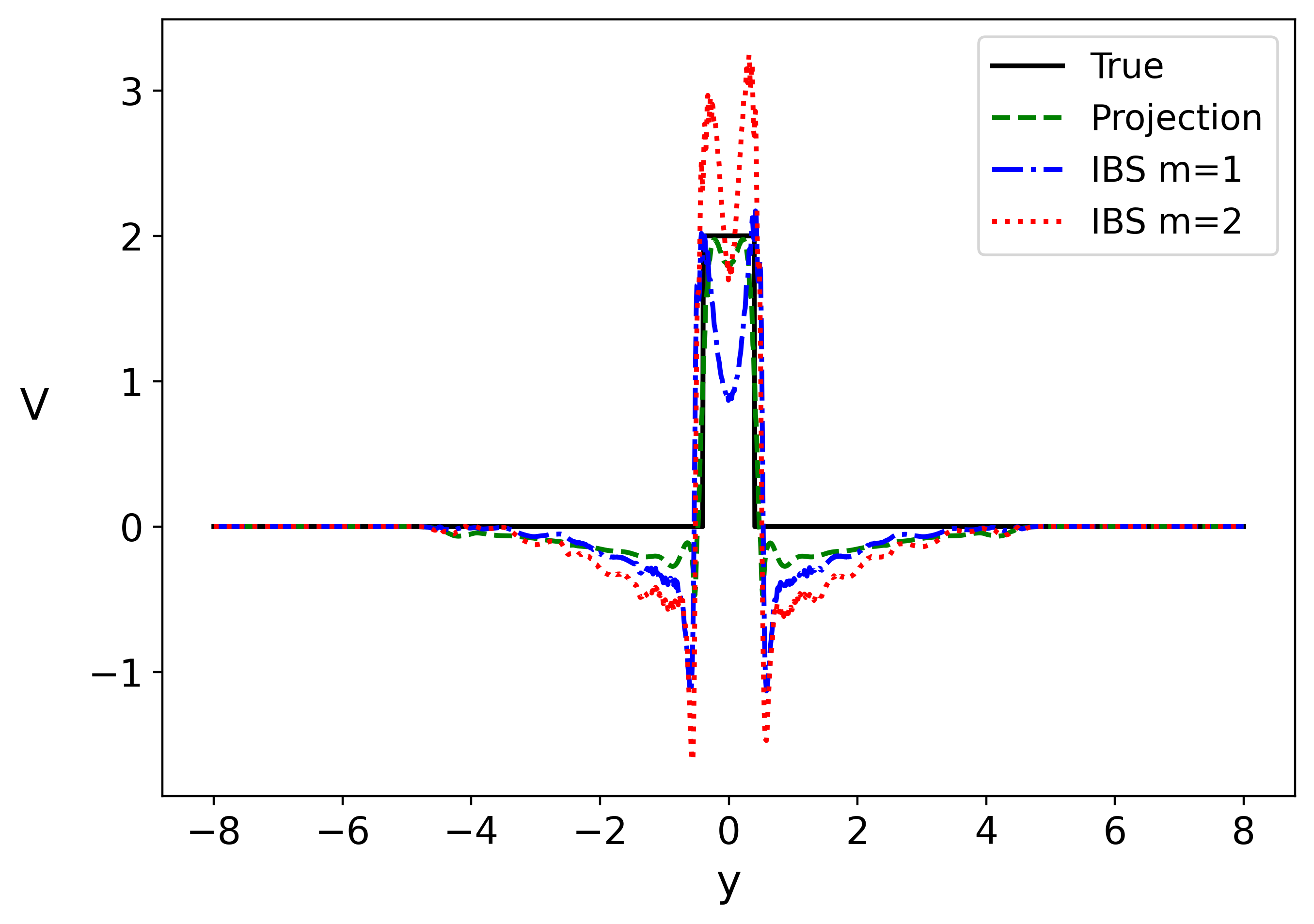}
\caption{Cross section at $x=1.0$}
\label{fig:chiral-disk-high-slice-x}
\end{subfigure}

\vspace{0.6em}

\begin{subfigure}[b]{0.45\textwidth}
\includegraphics[width=\textwidth]{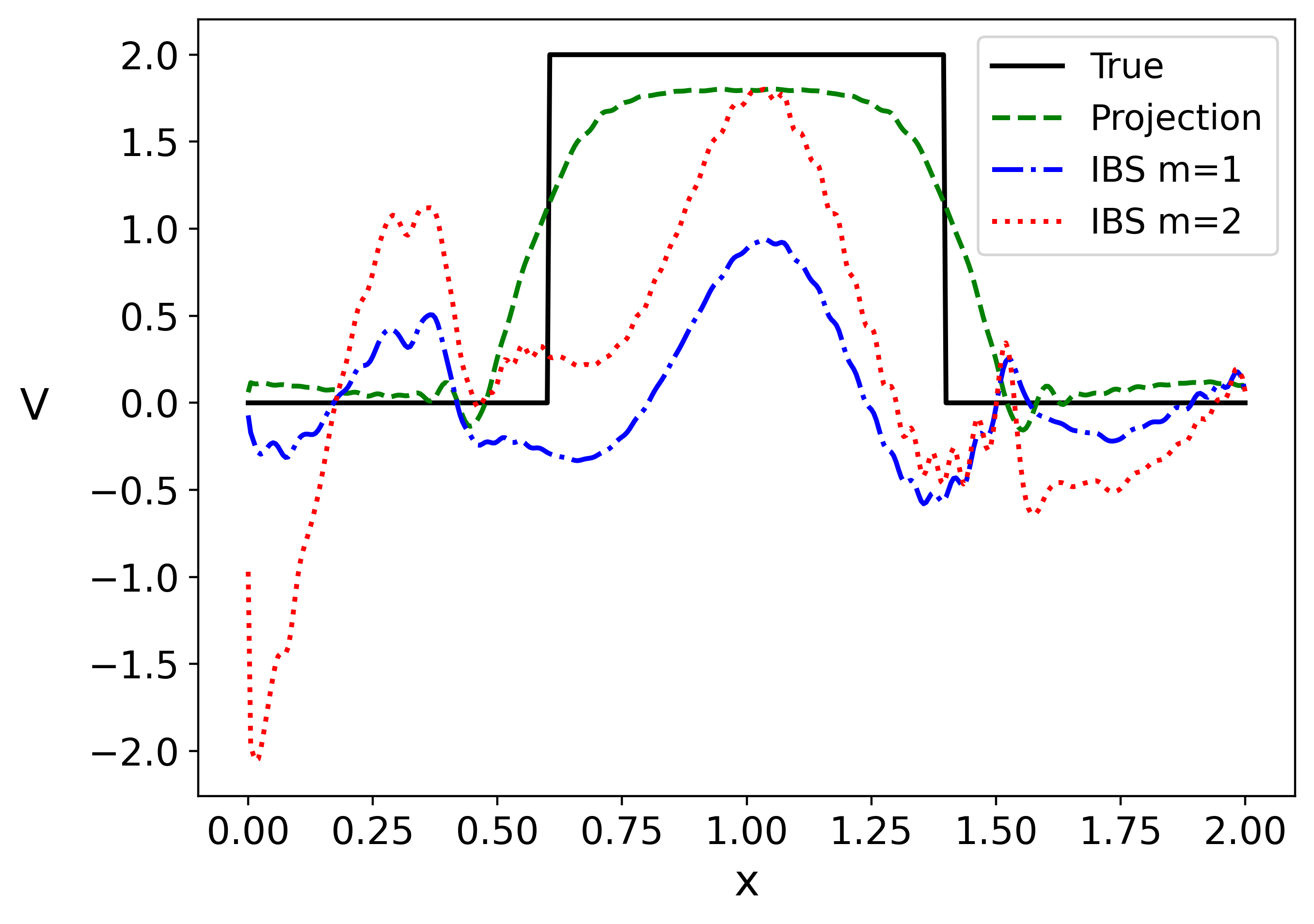}
\caption{Cross section at $y=0.0$}
\label{fig:chiral-disk-high-slice-y}
\end{subfigure}

\caption{Reconstructions of high contrast disk (chiral model)}
\label{fig:chiral-disk-high}
\end{figure}
\begin{figure}[htbp]
\centering
\begin{subfigure}[b]{0.88\textwidth}
\includegraphics[width=\textwidth]{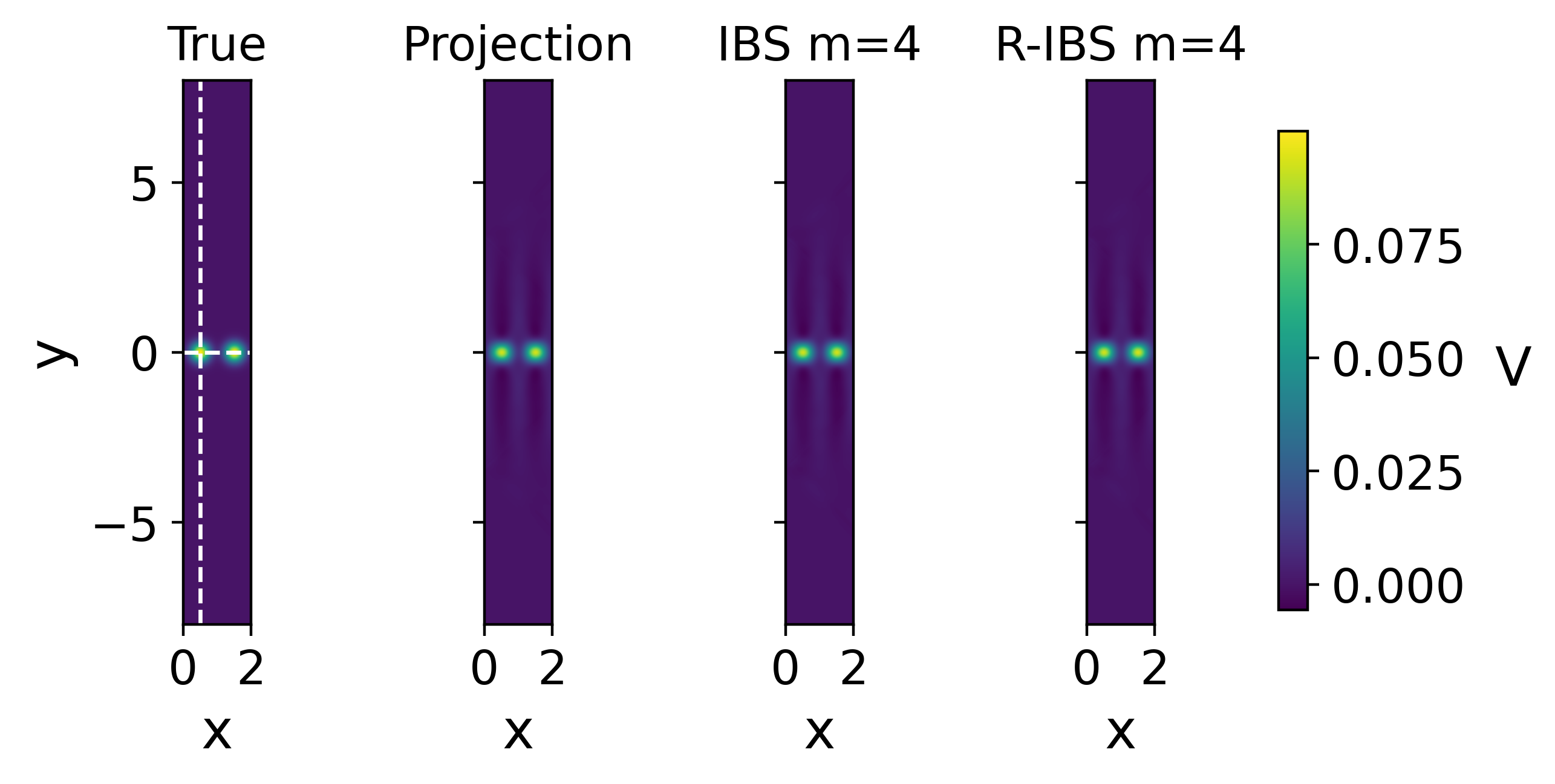}
\caption{Reconstructions of $V$}
\label{fig:chiral-gauss-low-global}
\end{subfigure}

\vspace{0.0em}

\begin{subfigure}[b]{0.43\textwidth}
\includegraphics[width=\textwidth]{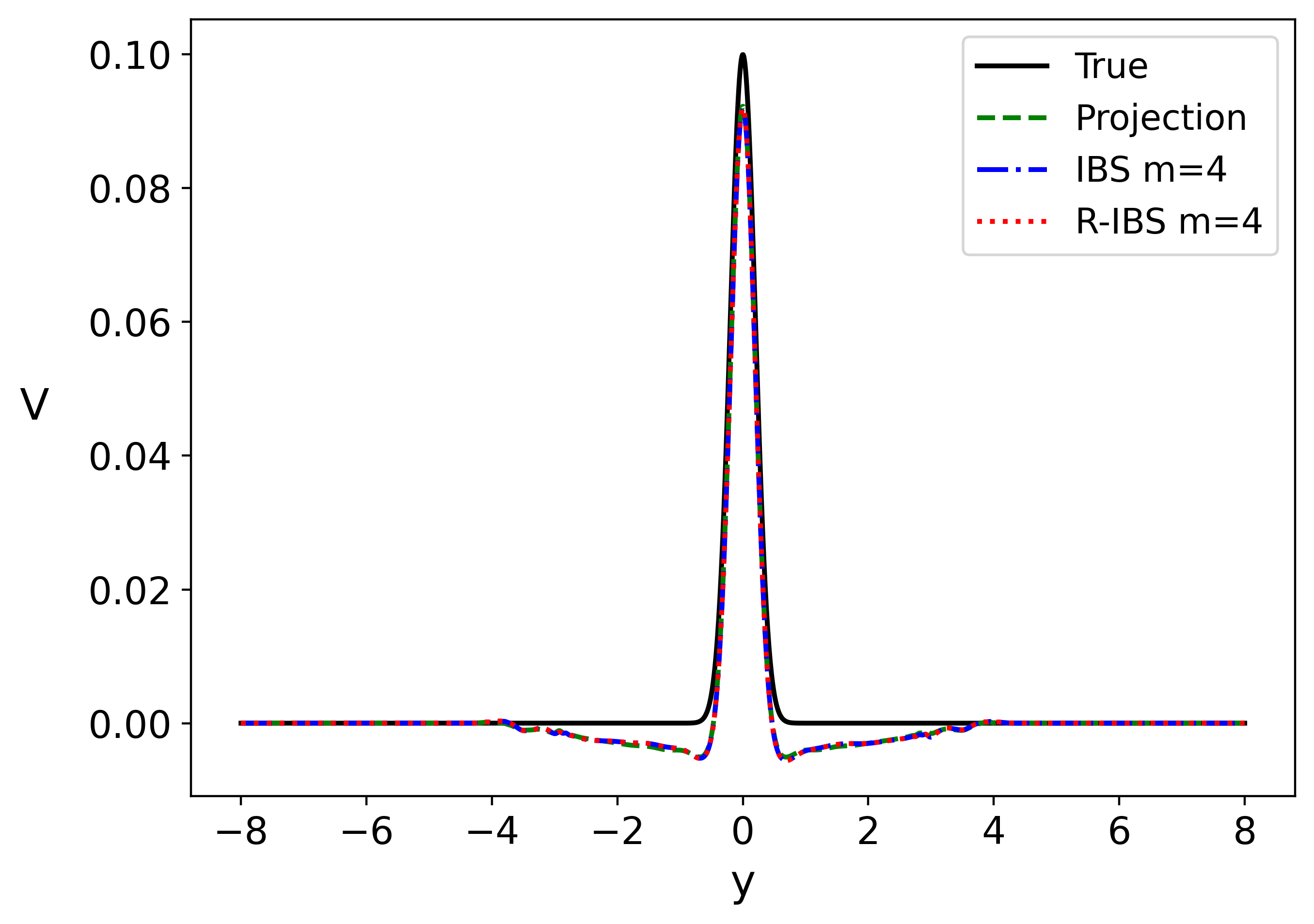}
\caption{Cross section at $x=0.5$}
\label{fig:chiral-gauss-low-slice-x}
\end{subfigure}\hfill
\begin{subfigure}[b]{0.43\textwidth}
\includegraphics[width=\textwidth]{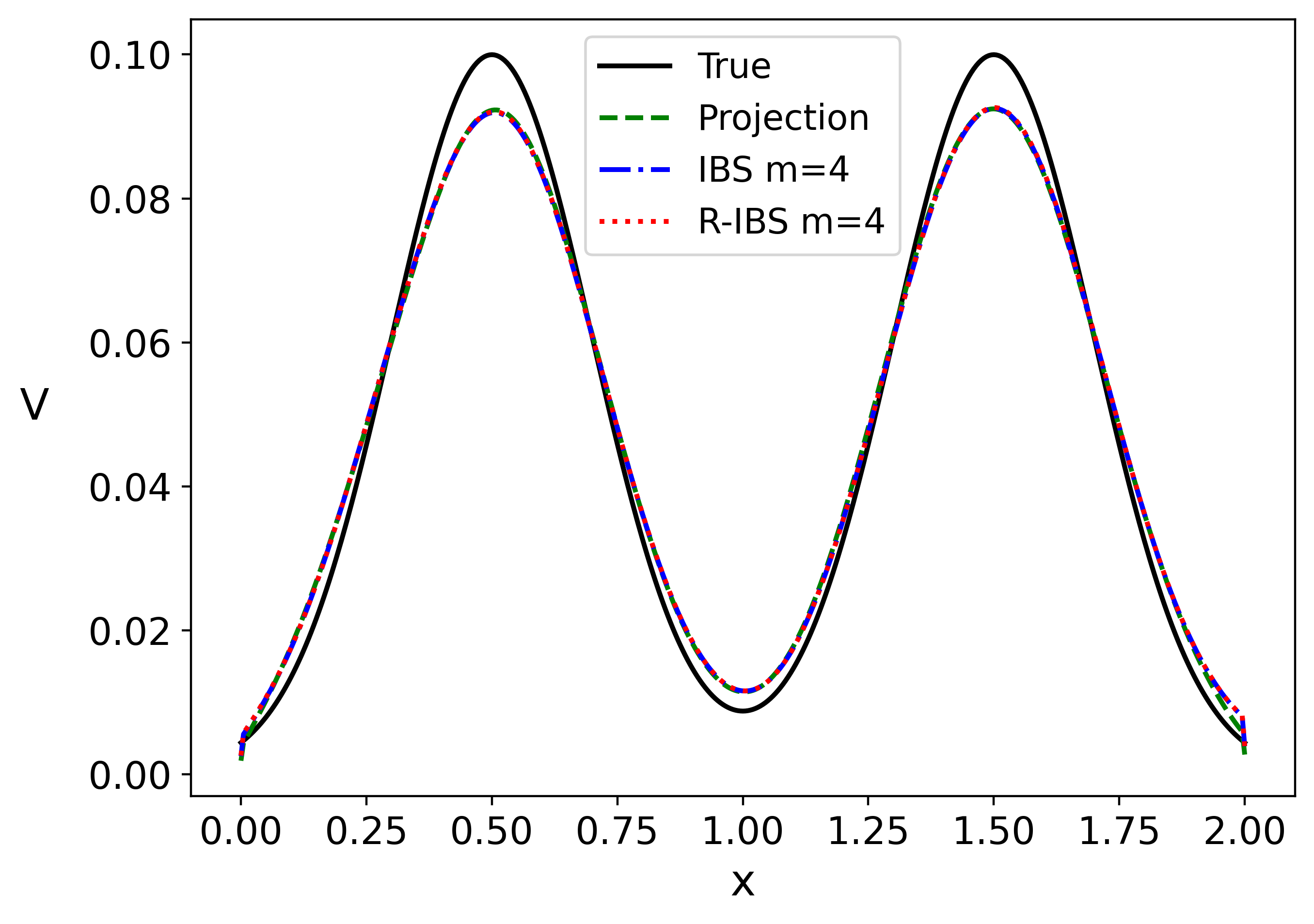}
\caption{Cross section at $y=0.0$}
\label{fig:chiral-gauss-low-slice-y}
\end{subfigure}

\vspace{0.0em}

\begin{subfigure}[b]{0.43\textwidth}
\includegraphics[width=\textwidth]{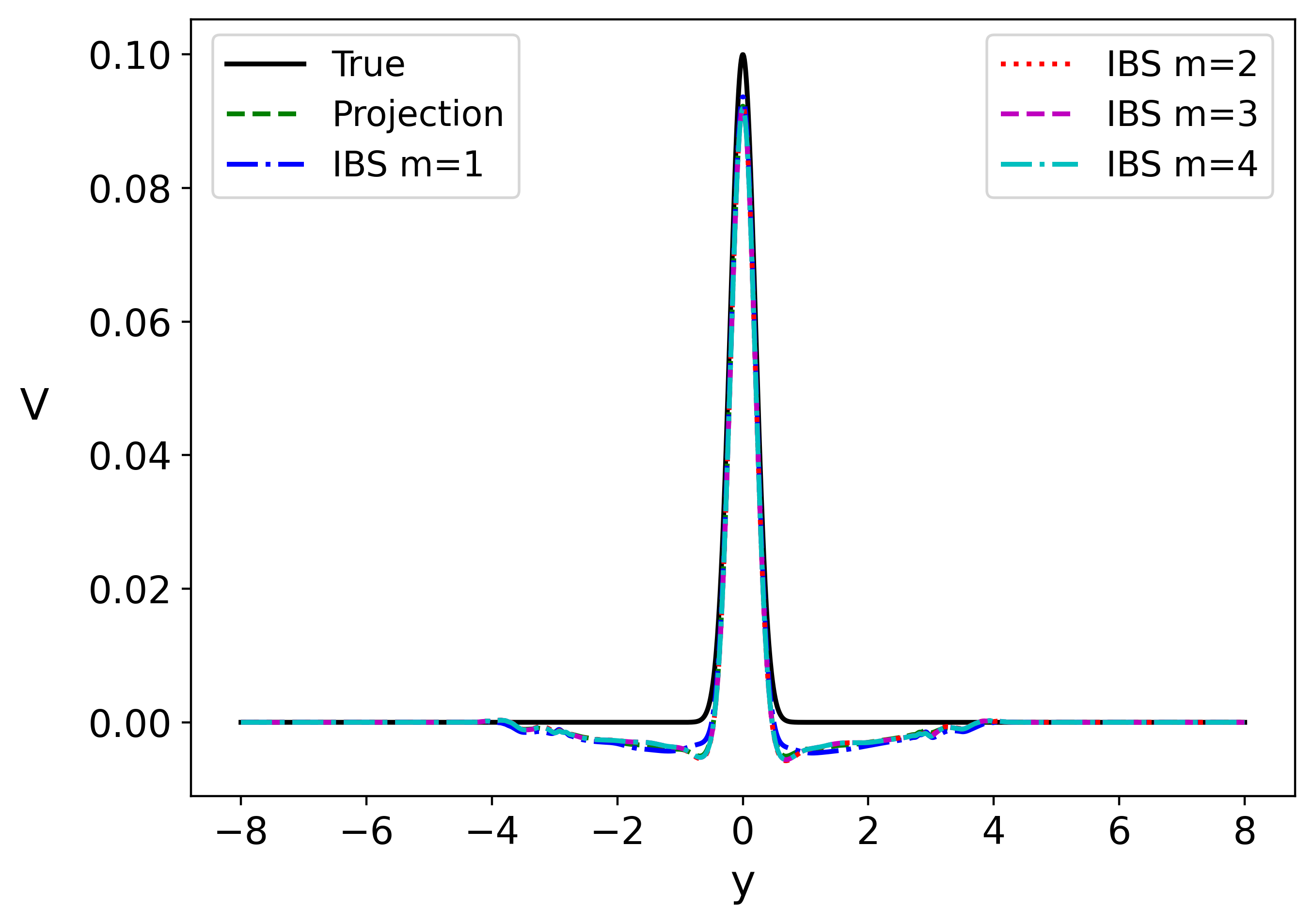}
\caption{IBS cross section at $x=0.5$}
\label{fig:chiral-gauss-low-ibs-x}
\end{subfigure}\hfill
\begin{subfigure}[b]{0.43\textwidth}
\includegraphics[width=\textwidth]{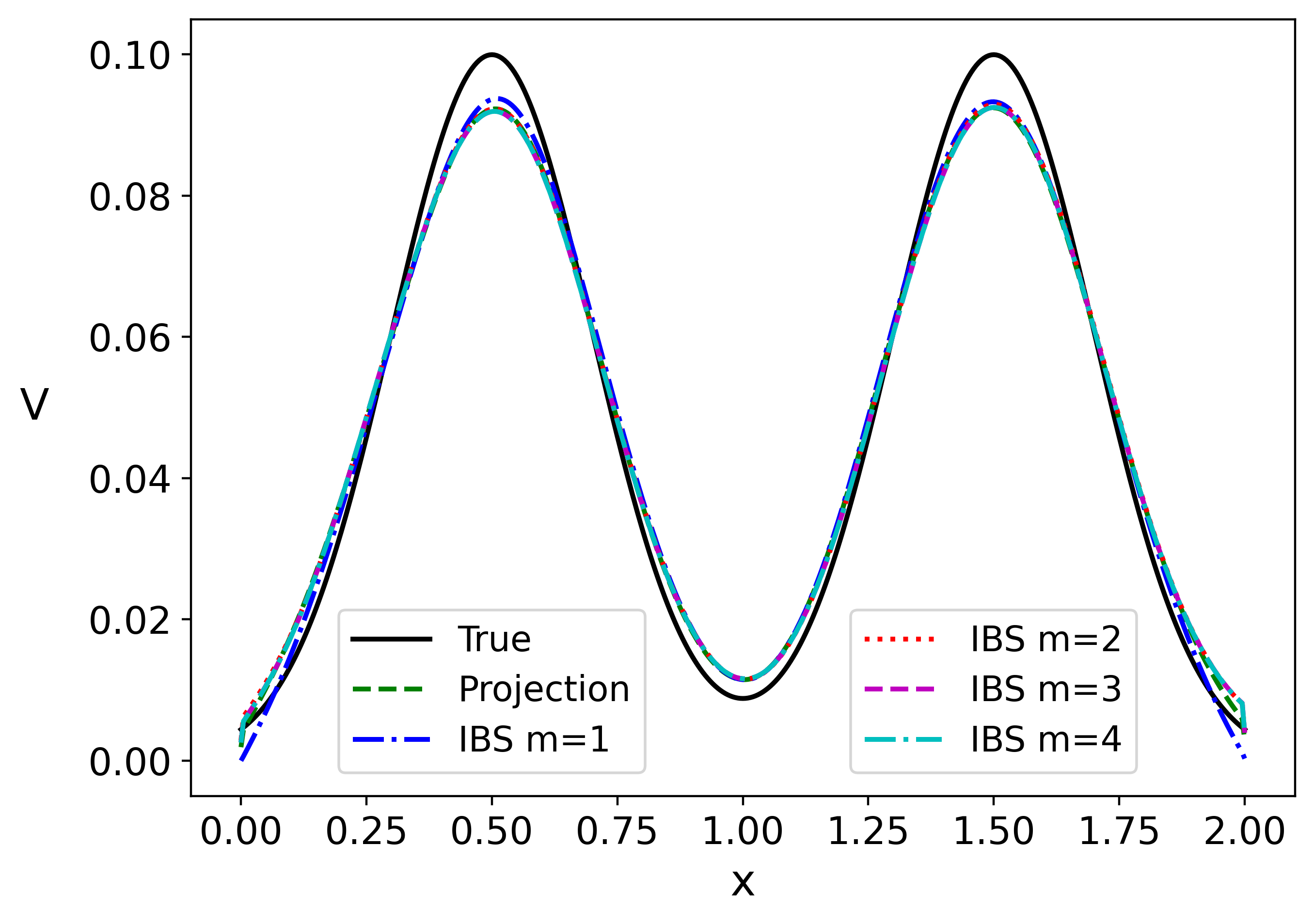}
\caption{IBS cross section at $y=0.0$}
\label{fig:chiral-gauss-low-ibs-y}
\end{subfigure}

\scriptsize
\setlength{\tabcolsep}{3pt}%
\begin{tabular}{@{}l|ccccc|cccc@{}}
    \toprule
     & Projection &
     IBS1 & IBS2 & IBS3 & IBS4 &
     RIBS1 & RIBS2 & RIBS3 & RIBS4 \\
    \midrule
    Relative error &
    0.2027 &   
    0.2107 &   
    0.2104 &   
    0.2099 &   
    0.2098 &   
    0.2107 &   
    0.2104 &   
    0.2099 &   
    0.2098 \\  
    \bottomrule
\end{tabular}
\caption{Reconstructions of two low contrast Gaussian scatterers (chiral model)}
\label{fig:chiral-gauss-low}
\end{figure}

\begin{figure}[htbp]
\centering
\begin{subfigure}[b]{0.87\textwidth}
\includegraphics[width=\textwidth]{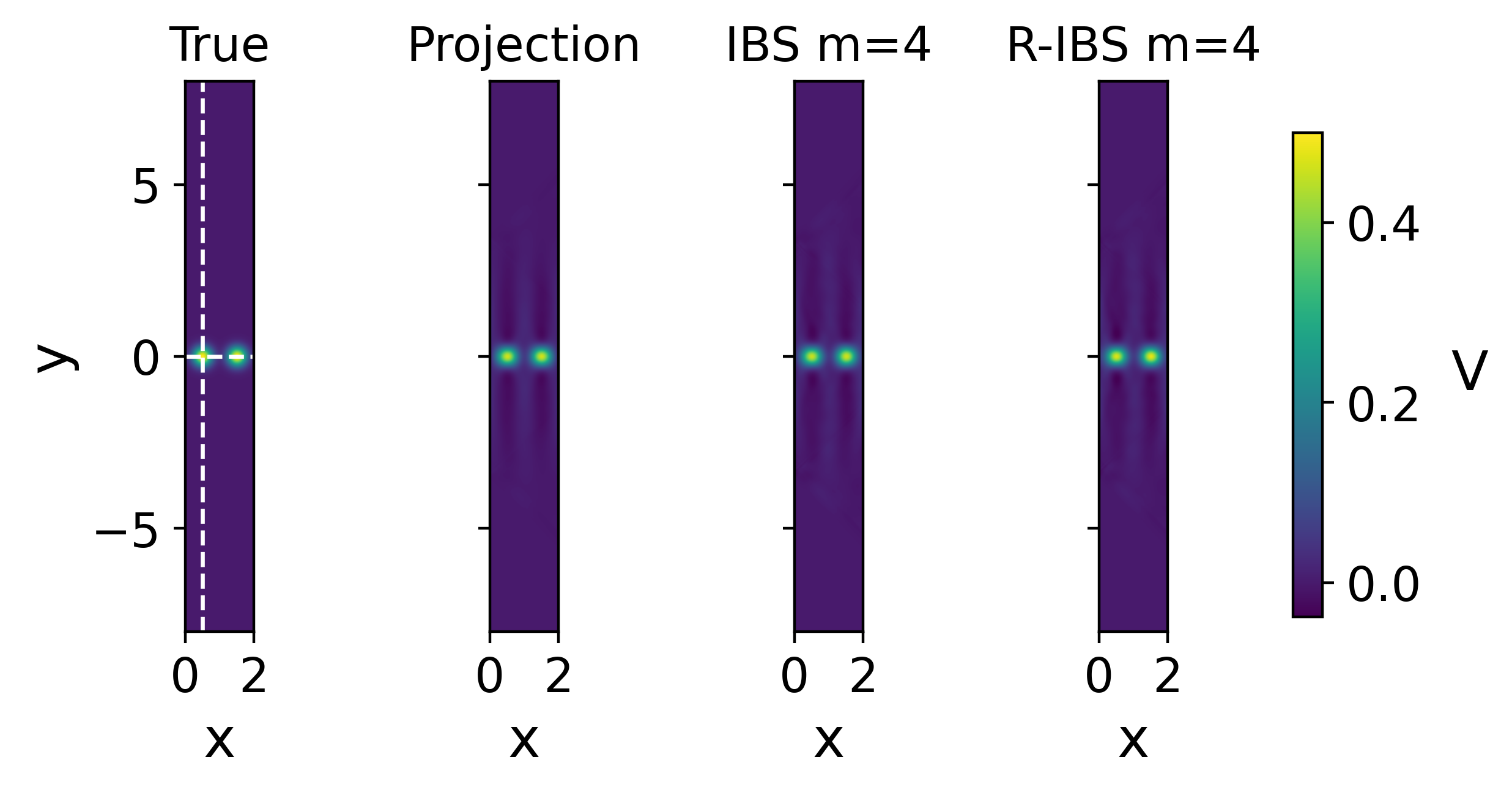}
\caption{Reconstructions of $V$}
\label{fig:chiral-gauss-mid-global}
\end{subfigure}

\vspace{0.0em}

\begin{subfigure}[b]{0.42\textwidth}
\includegraphics[width=\textwidth]{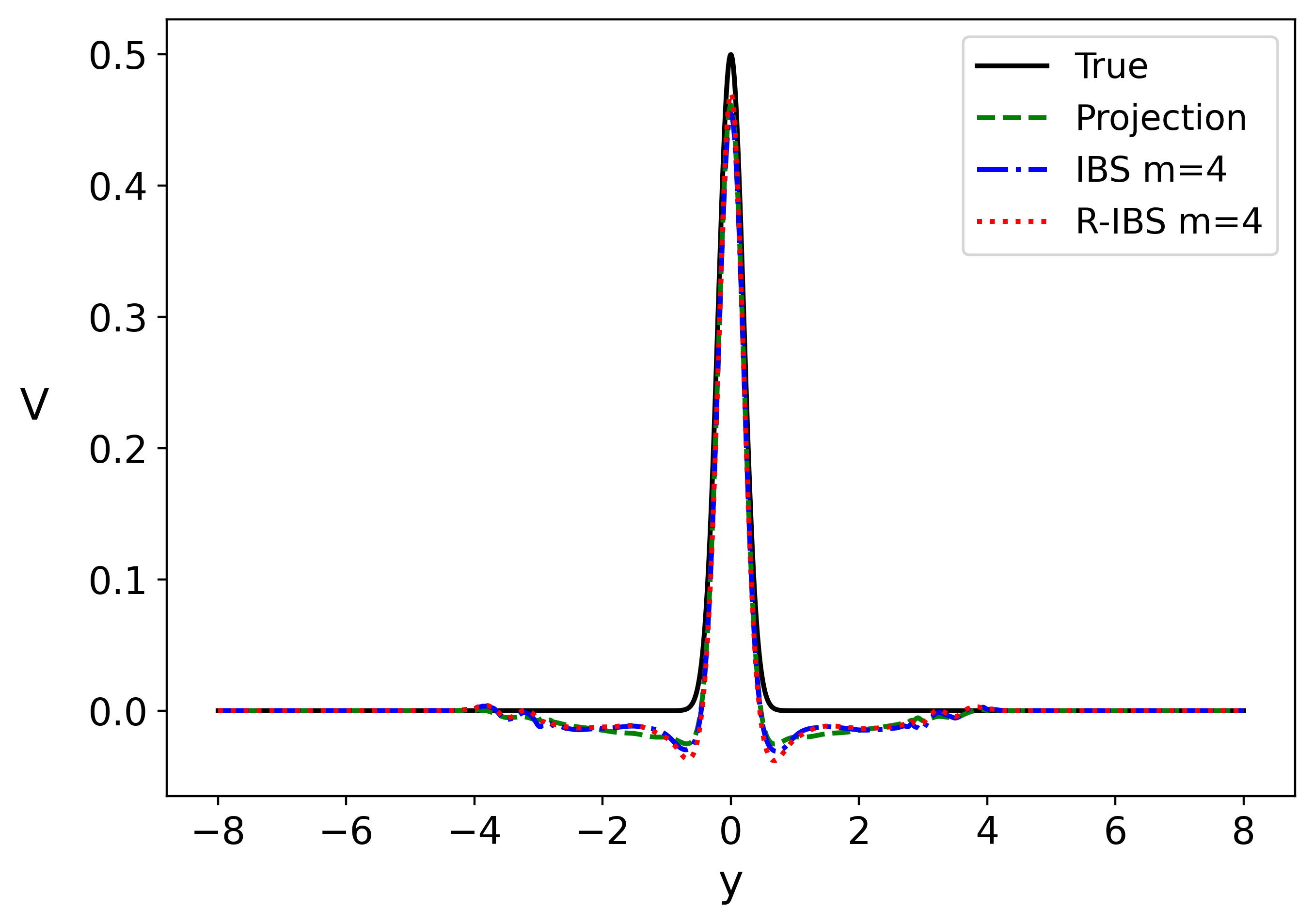}
\caption{Cross section at $x=0.5$}
\label{fig:chiral-gauss-mid-slice-x}
\end{subfigure}\hfill
\begin{subfigure}[b]{0.42\textwidth}
\includegraphics[width=\textwidth]{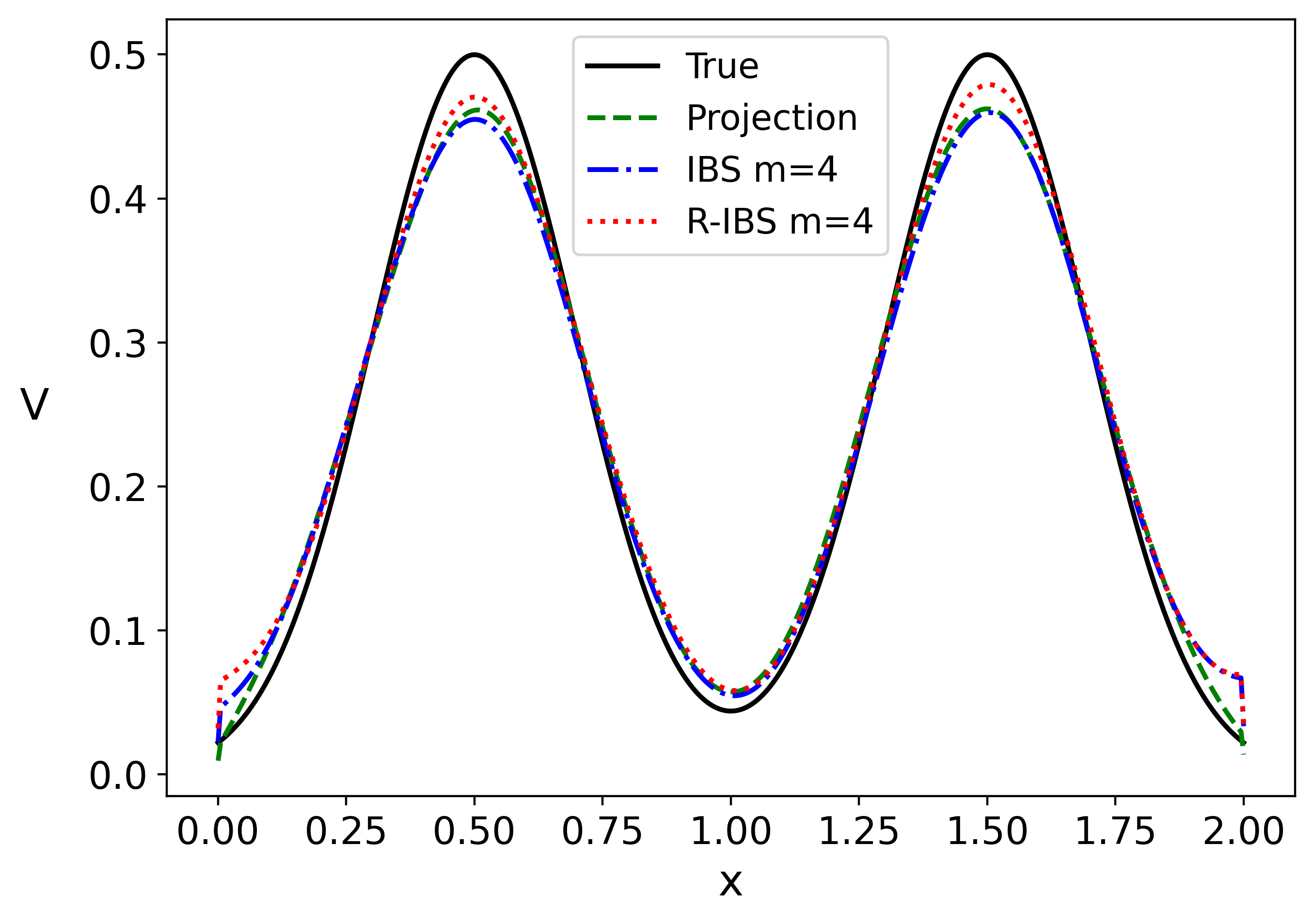}
\caption{Cross section at $y=0.0$}
\label{fig:chiral-gauss-mid-slice-y}
\end{subfigure}
\vspace{0.0em}
\begin{subfigure}[b]{0.42\textwidth}
\includegraphics[width=\textwidth]{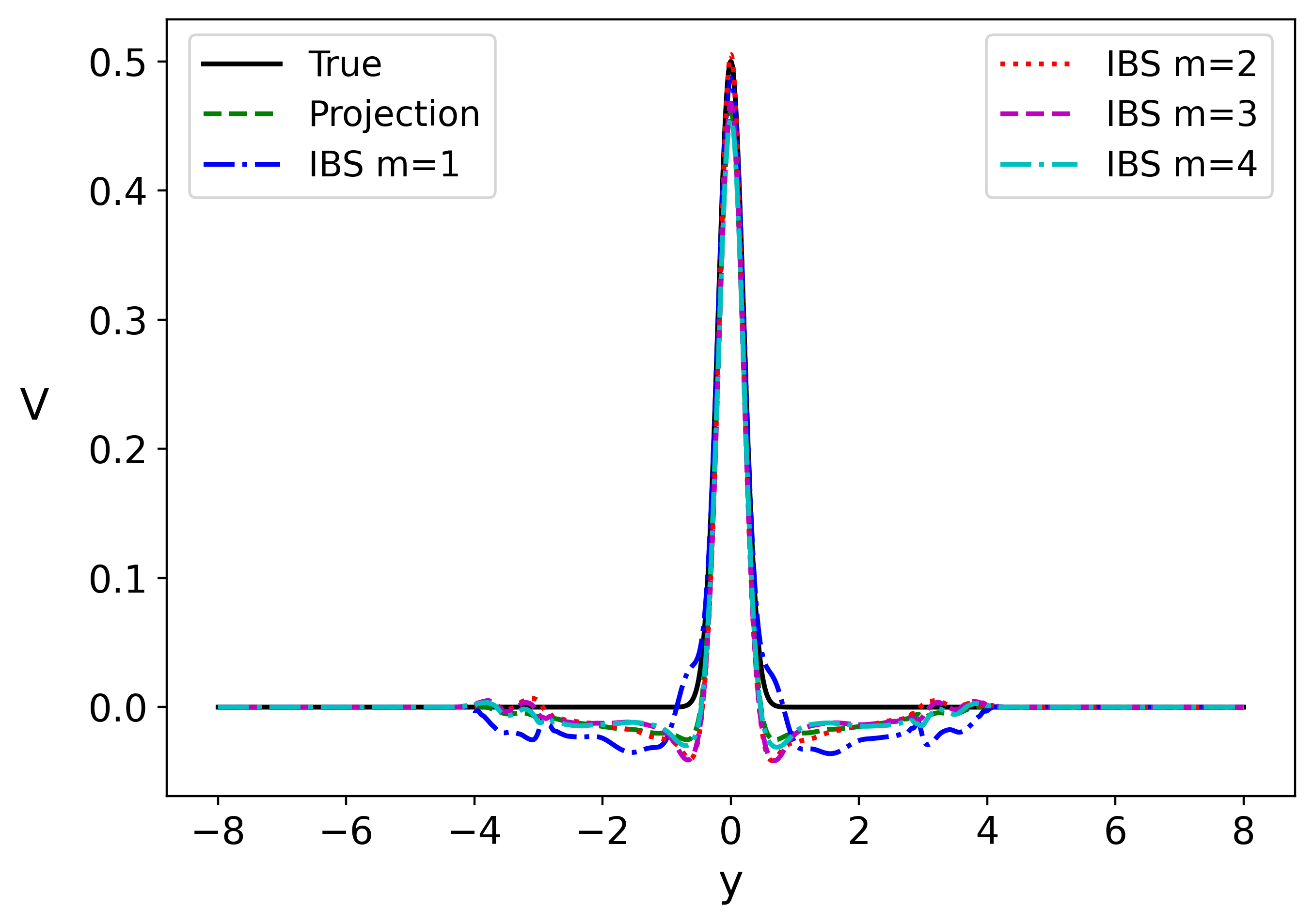}
\caption{IBS cross section at $x=0.5$}
\label{fig:chiral-gauss-mid-ibs-x}
\end{subfigure}\hfill
\begin{subfigure}[b]{0.42\textwidth}
\includegraphics[width=\textwidth]{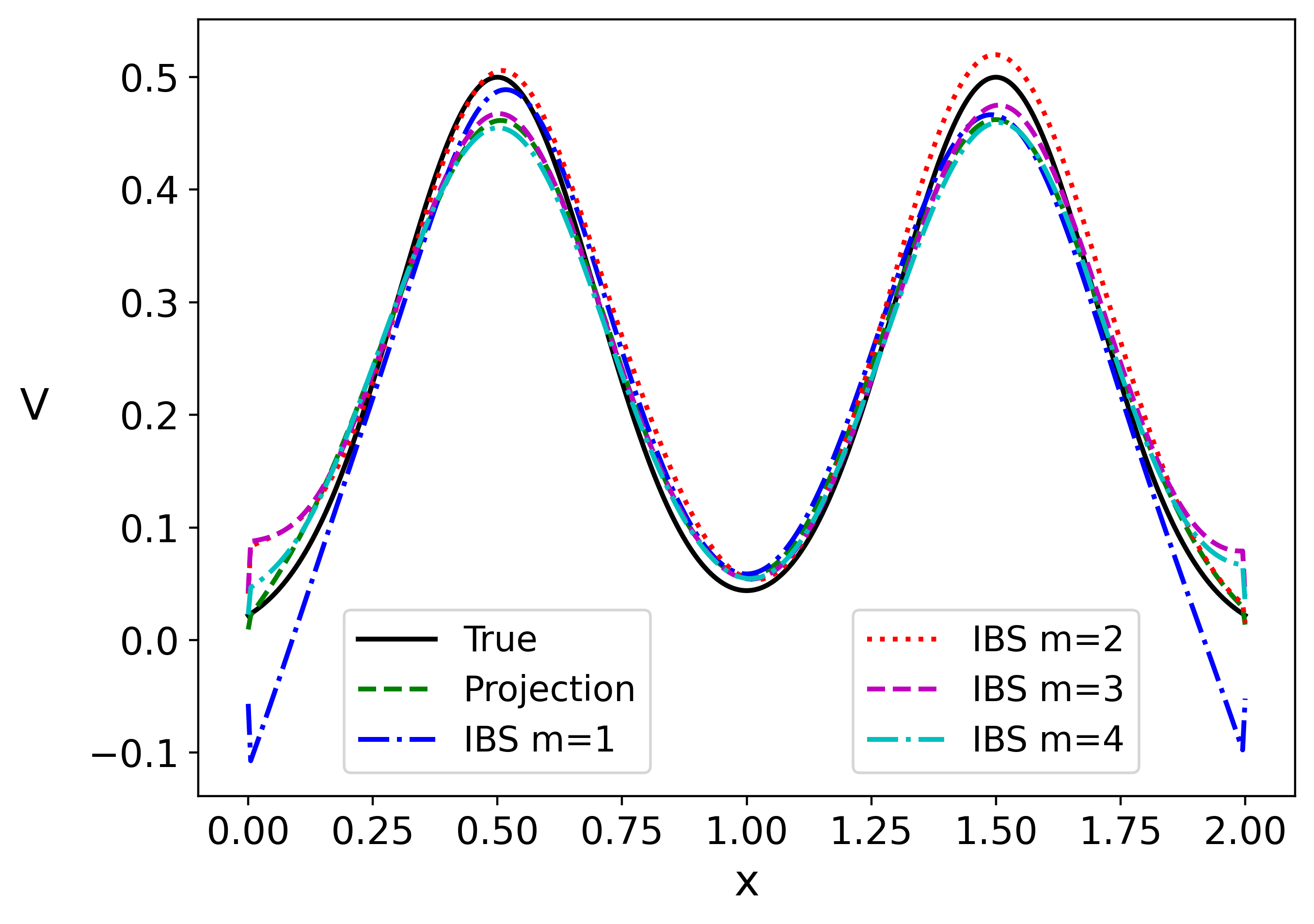}
\caption{IBS cross section at $y=0.0$}
\label{fig:chiral-gauss-mid-ibs-y}
\end{subfigure}

\scriptsize
\setlength{\tabcolsep}{3pt}
\begin{tabular}{@{}l|ccccc|cccc@{}}
\toprule
 & Projection & IBS1 & IBS2 & IBS3 & IBS4 &
 RIBS1 & RIBS2 & RIBS3 & RIBS4 \\
\midrule
Relative error &
0.2027 &   
0.4491 &   
0.2832 &   
0.2599 &   
0.2364 &   
0.4491 &   
0.2832 &   
0.2553 &   
0.2363 \\  
\bottomrule
\end{tabular}

\caption{Reconstructions of two medium contrast Gaussian scatterers (chiral model)}
\label{fig:chiral-gauss-mid}
\end{figure}
\begin{figure}[htbp]
\centering
\begin{subfigure}[b]{0.9\textwidth}
\includegraphics[width=\textwidth]{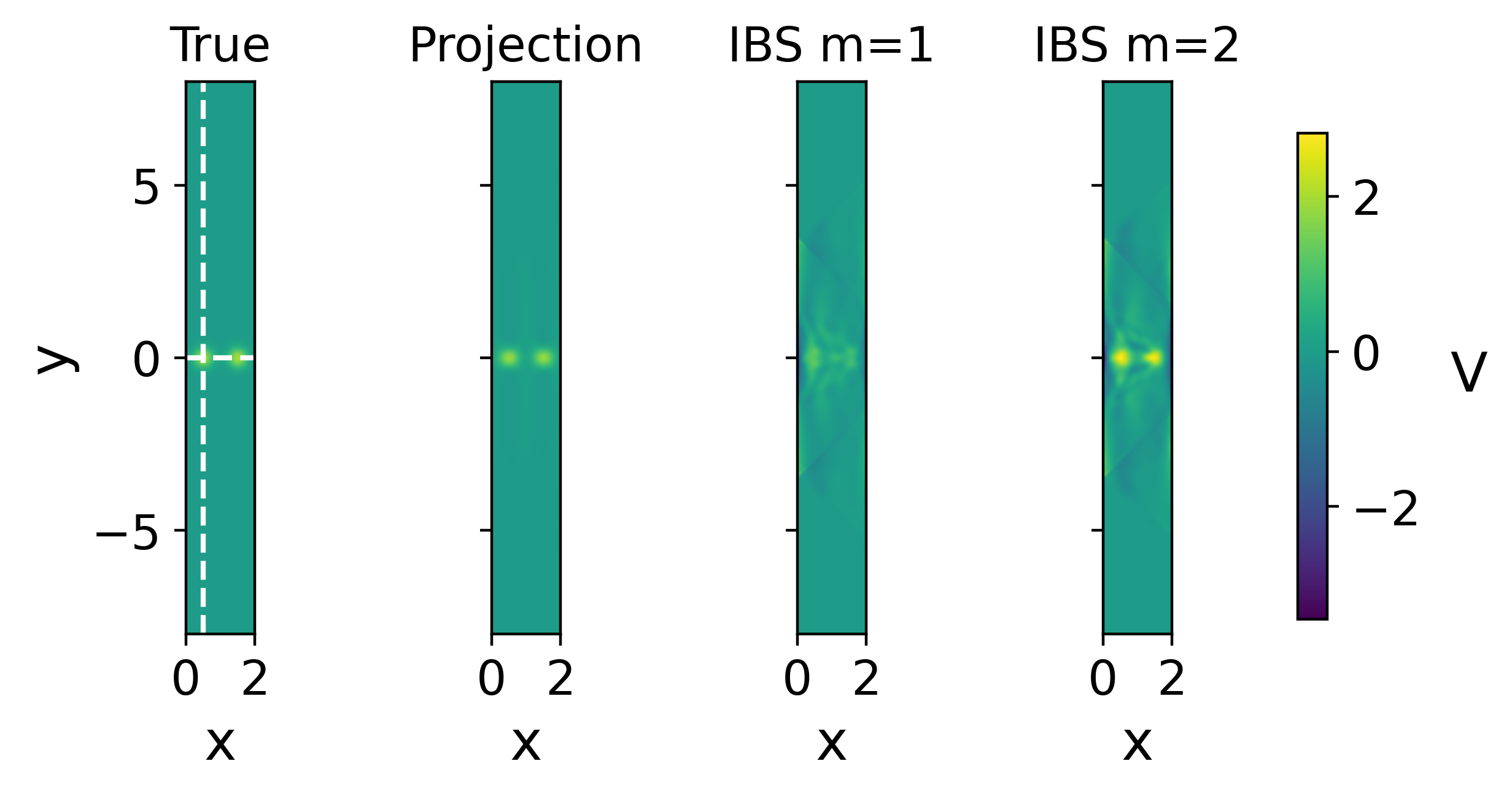}
\caption{Reconstructions of $V$}
\label{fig:chiral-gauss-high-global}
\end{subfigure}

\vspace{0.0em}

\begin{subfigure}[b]{0.45\textwidth}
\includegraphics[width=\textwidth]{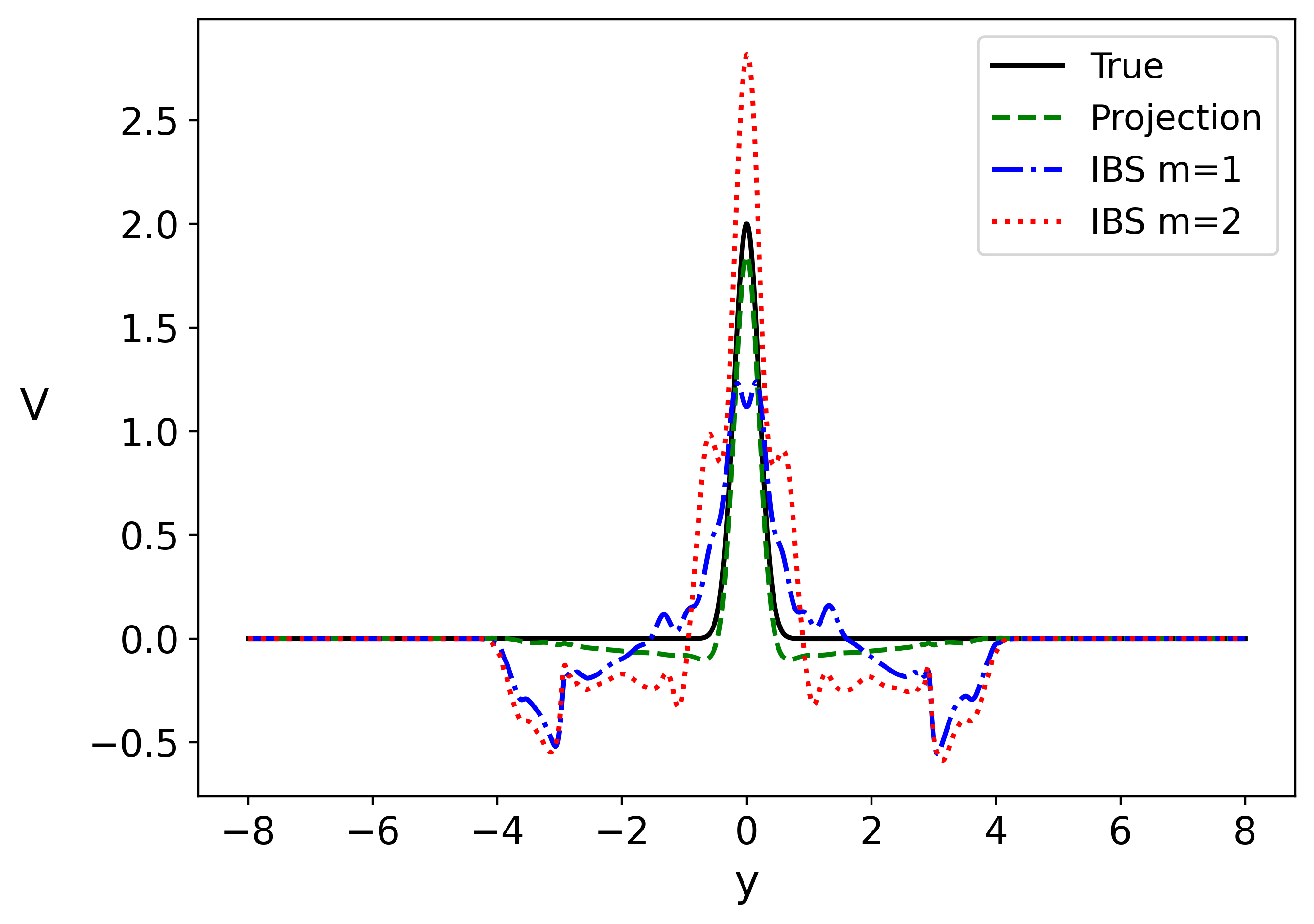}
\caption{Cross section at $x=0.5$}
\label{fig:chiral-gauss-high-slice-x}
\end{subfigure}

\vspace{0.0em}

\begin{subfigure}[b]{0.45\textwidth}
\includegraphics[width=\textwidth]{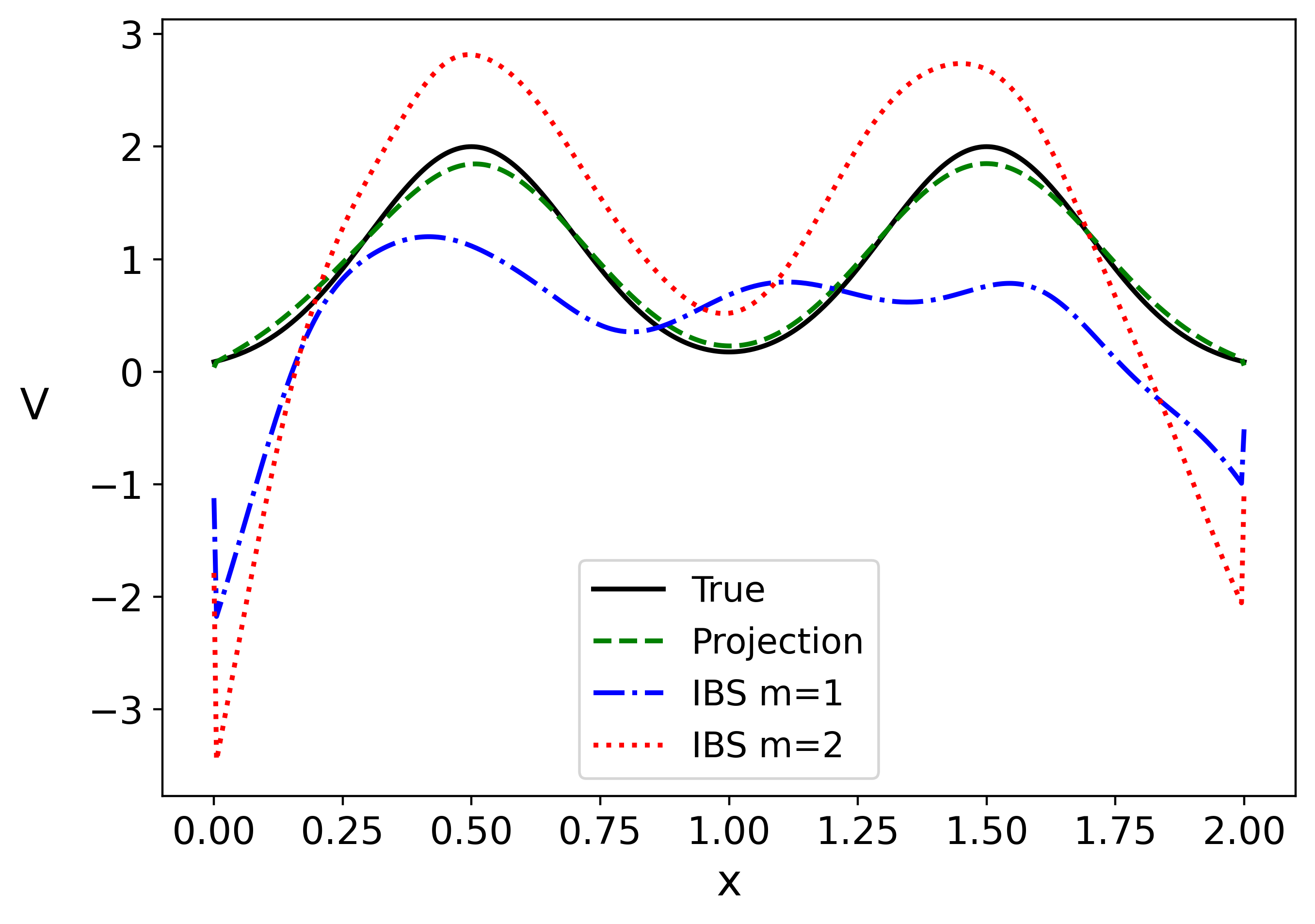}
\caption{Cross section at $y=0.0$}
\label{fig:chiral-gauss-high-slice-y}
\end{subfigure}

\caption{Reconstructions of two high contrast Gaussian scatterers (chiral model)}
\label{fig:chiral-gauss-high}
\end{figure}
\begin{figure}[htbp]
    \centering
    \begin{subfigure}[b]{\textwidth}
        \centering
        \includegraphics[width=\textwidth]{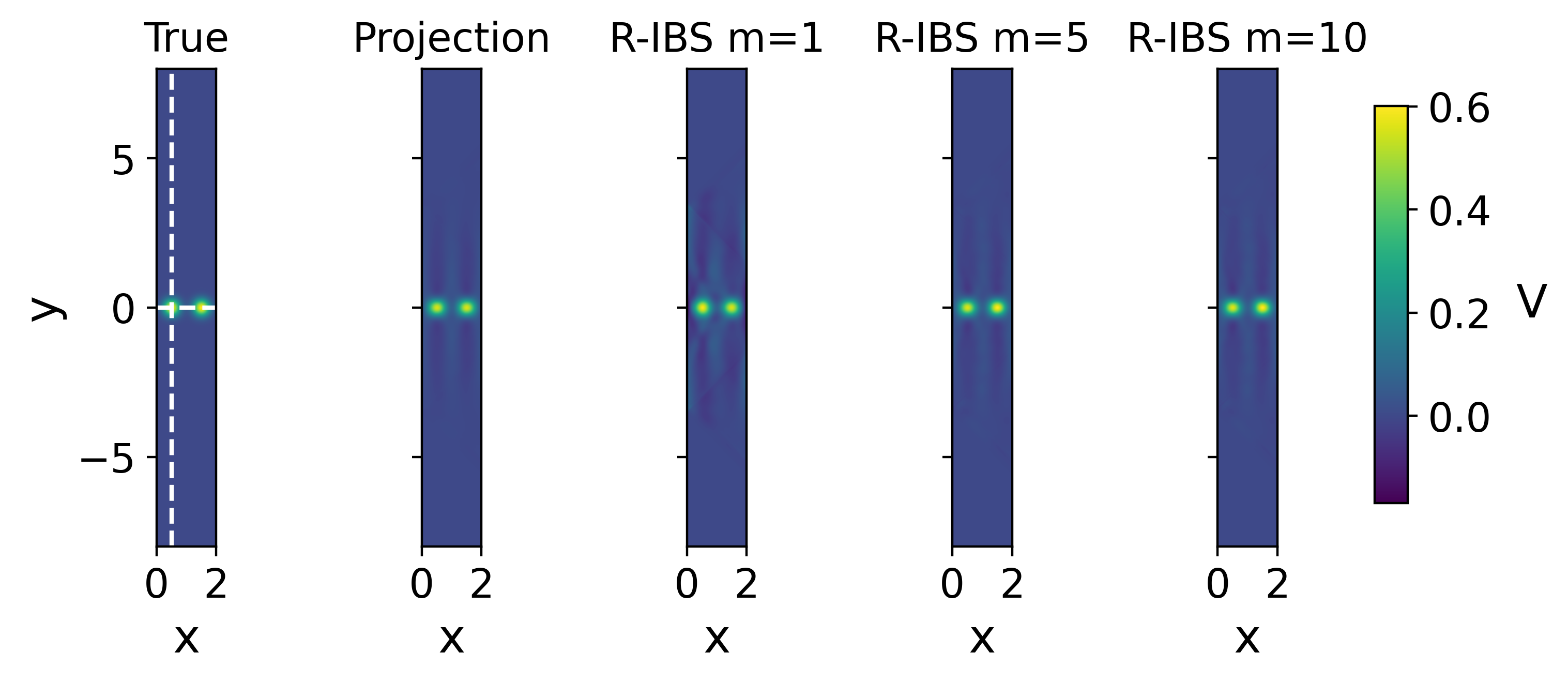}
        \caption{Reconstructions of $V$}
        \label{fig:chiral-fullribs-global}
    \end{subfigure}

    \par\vspace{0.5em}

    \begin{subfigure}[b]{0.5\textwidth}
        \centering
        \includegraphics[width=\textwidth]{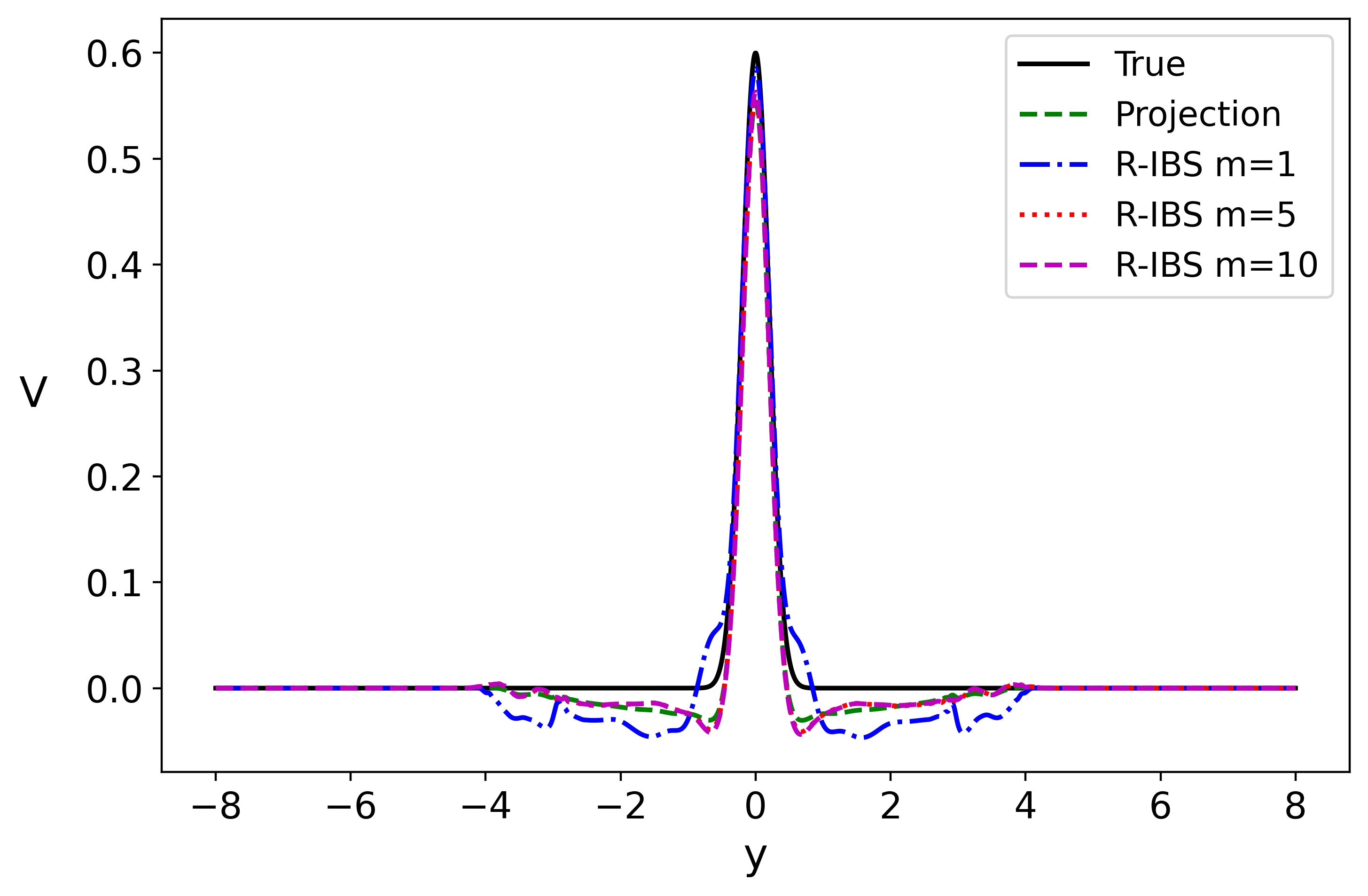}
        \caption{Cross section at $x=0.5$}
        \label{fig:chiral-fullribs-slice-x}
    \end{subfigure}%
    \begin{subfigure}[b]{0.5\textwidth}
        \centering
        \includegraphics[width=\textwidth]{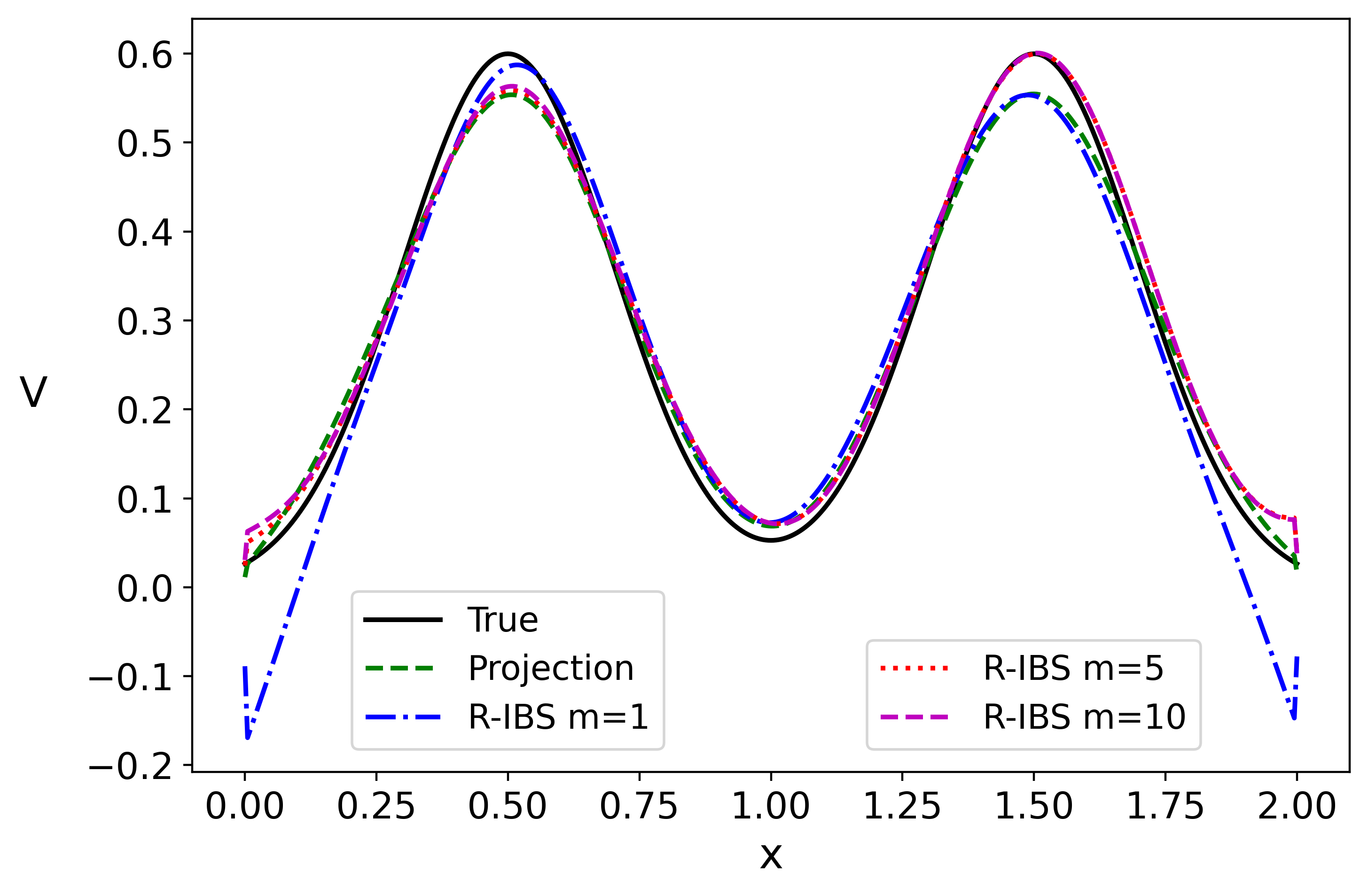}
        \caption{Cross section at $y=0.0$}
        \label{fig:chiral-fullribs-slice-y}
    \end{subfigure}

  \scriptsize
\setlength{\tabcolsep}{3pt}%
\resizebox{\linewidth}{!}{%
\begin{tabular}{@{}l|ccccccccccc@{}}
\toprule
 & Projection &
 RIBS1 & RIBS2 & RIBS3 & RIBS4 & RIBS5 &
 RIBS6 & RIBS7 & RIBS8 & RIBS9 & RIBS10 \\
\midrule
Relative error &
0.2027 &   
0.5319 &   
0.3318 &   
0.2816 &   
0.2471 &   
0.2304 &   
0.2290 &   
0.2310 &   
0.2317 &   
0.2316 &   
0.2315 \\  
\bottomrule
\end{tabular}%
}

    \caption{Reconstructions for the chiral model using the RIBS}
    \label{fig:chiral-fullribs}
\end{figure}

\begin{figure}[htbp]
    \centering
    \begin{subfigure}[b]{0.9\textwidth}
        \centering
        \includegraphics[width=\textwidth]{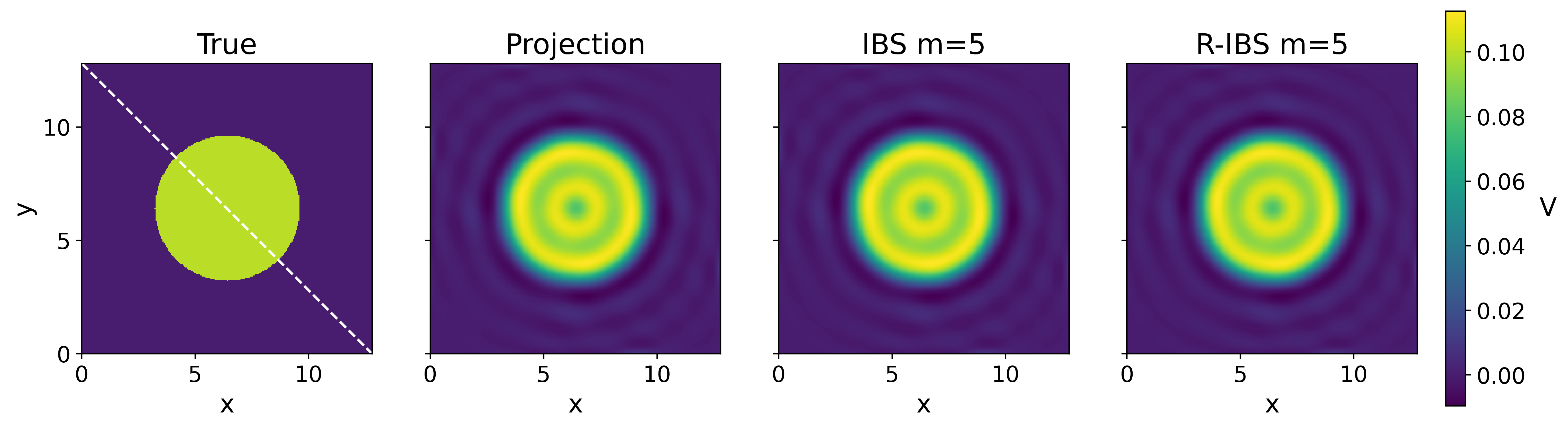}
        \caption{Reconstructions of $V$}
        \label{fig:antichiral-disk-low-global}
    \end{subfigure}

    \vspace{0.6em}

    \begin{subfigure}[b]{0.45\textwidth}
        \centering
        \includegraphics[width=\textwidth]{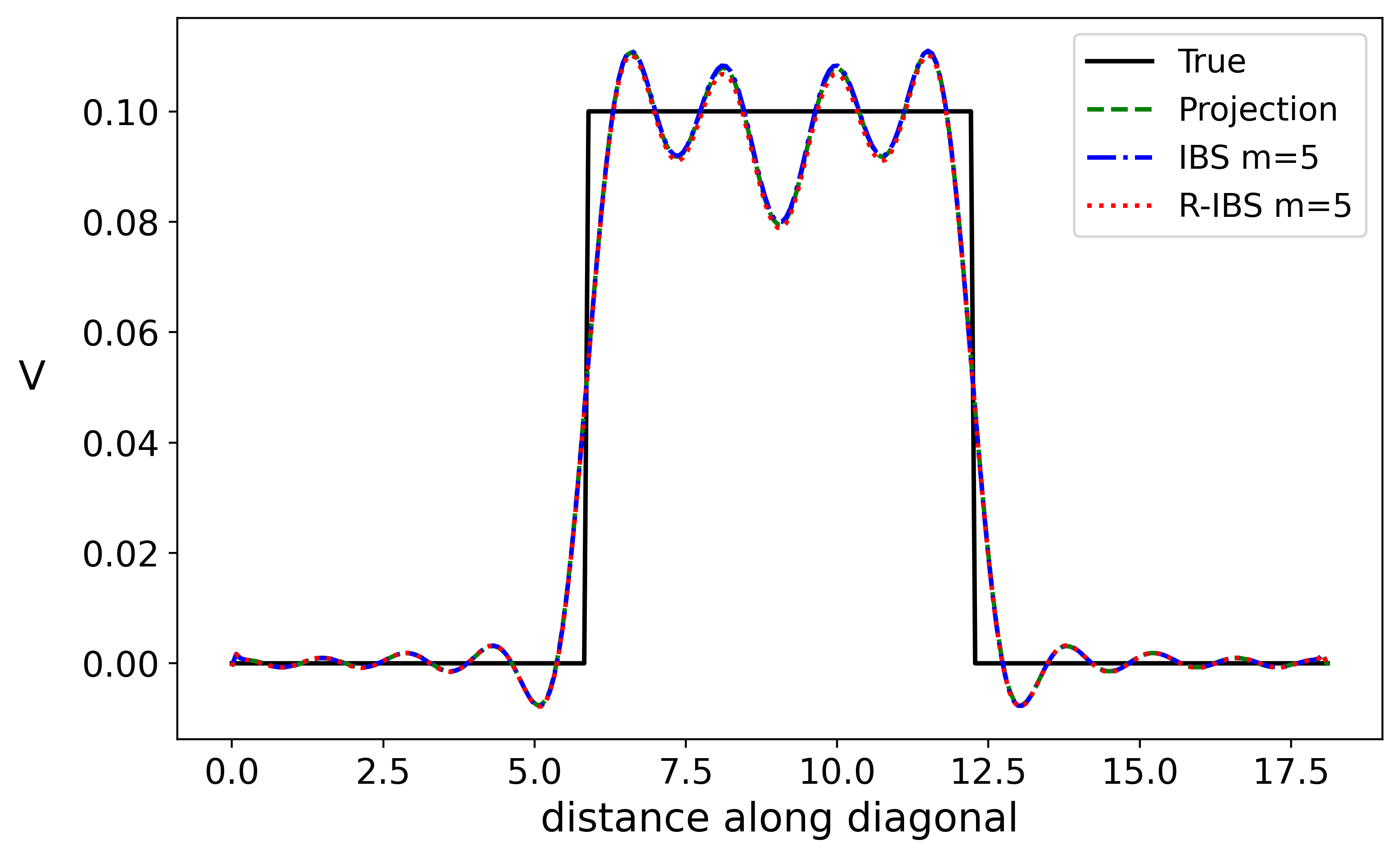}
        \caption{Anti-diagonal cross section}
        \label{fig:antichiral-disk-low-slice-main}
    \end{subfigure}%

    \vspace{0.6em}

    \begin{subfigure}[b]{0.45\textwidth}
        \centering
        \includegraphics[width=\textwidth]{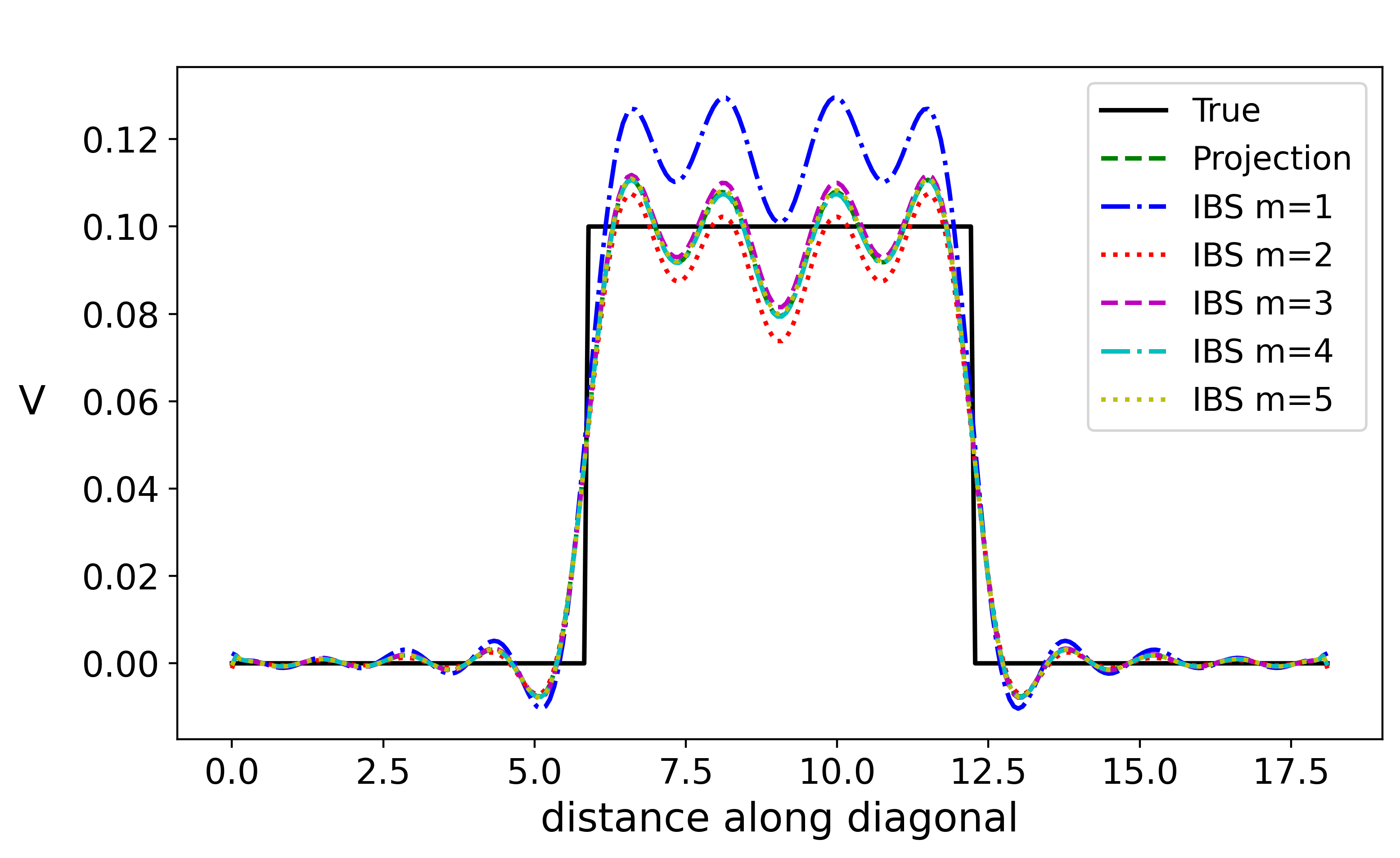}
        \caption{Anti-diagonal cross section of IBS}
        \label{fig:antichiral-disk-low-slice-ibsall}
    \end{subfigure}%

    \scriptsize
\setlength{\tabcolsep}{3pt}%
\resizebox{\linewidth}{!}{
\begin{tabular}{@{}l|cccccc|ccccc@{}}
    \toprule
     & Projection &
     IBS1 & IBS2 & IBS3 & IBS4 & IBS5 &
     RIBS1 & RIBS2 & RIBS3 & RIBS4 & RIBS5 \\
    \midrule
    Relative error &
    0.2186 &   
    0.2754 &   
    0.2234 &   
    0.2186 &   
    0.2186 &   
    0.2185 &   
    0.2754 &   
    0.2234 &   
    0.2185 &   
    0.2188 &   
    0.2187 \\  
    \bottomrule
\end{tabular}}

    \caption{Reconstructions of a low contrast disk (anti-chiral model)}
    \label{fig:antichiral-disk-low}
\end{figure}
\begin{figure}[htbp]
    \centering
    \begin{subfigure}[b]{0.9\textwidth}
        \centering
        \includegraphics[width=\textwidth]{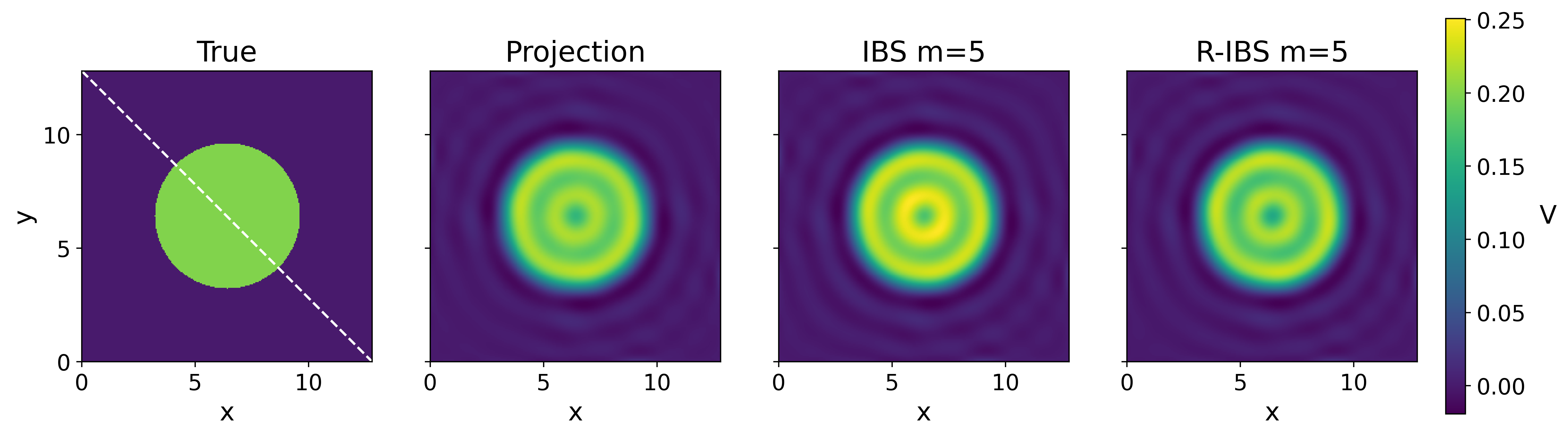}
        \caption{Reconstructions of $V$}
        \label{fig:antichiral-disk-mid-global}
    \end{subfigure}

    \vspace{0.6em}

    \begin{subfigure}[b]{0.45\textwidth}
        \centering
        \includegraphics[width=\textwidth]{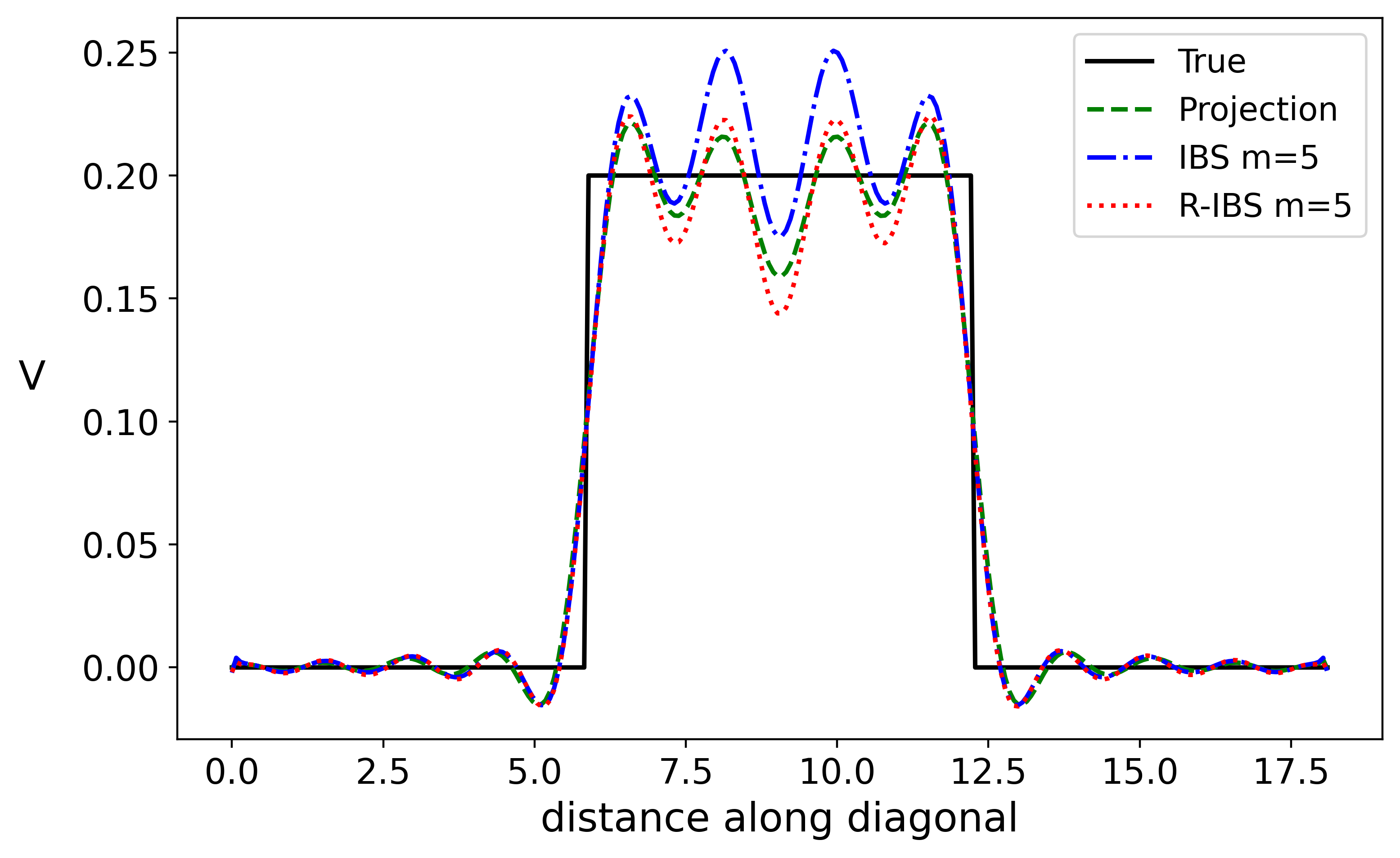}
        \caption{Anti-diagonal cross section}
        \label{fig:antichiral-disk-mid-slice-main}
    \end{subfigure}%

    \vspace{0.6em}

    \begin{subfigure}[b]{0.45\textwidth}
        \centering
        \includegraphics[width=\textwidth]{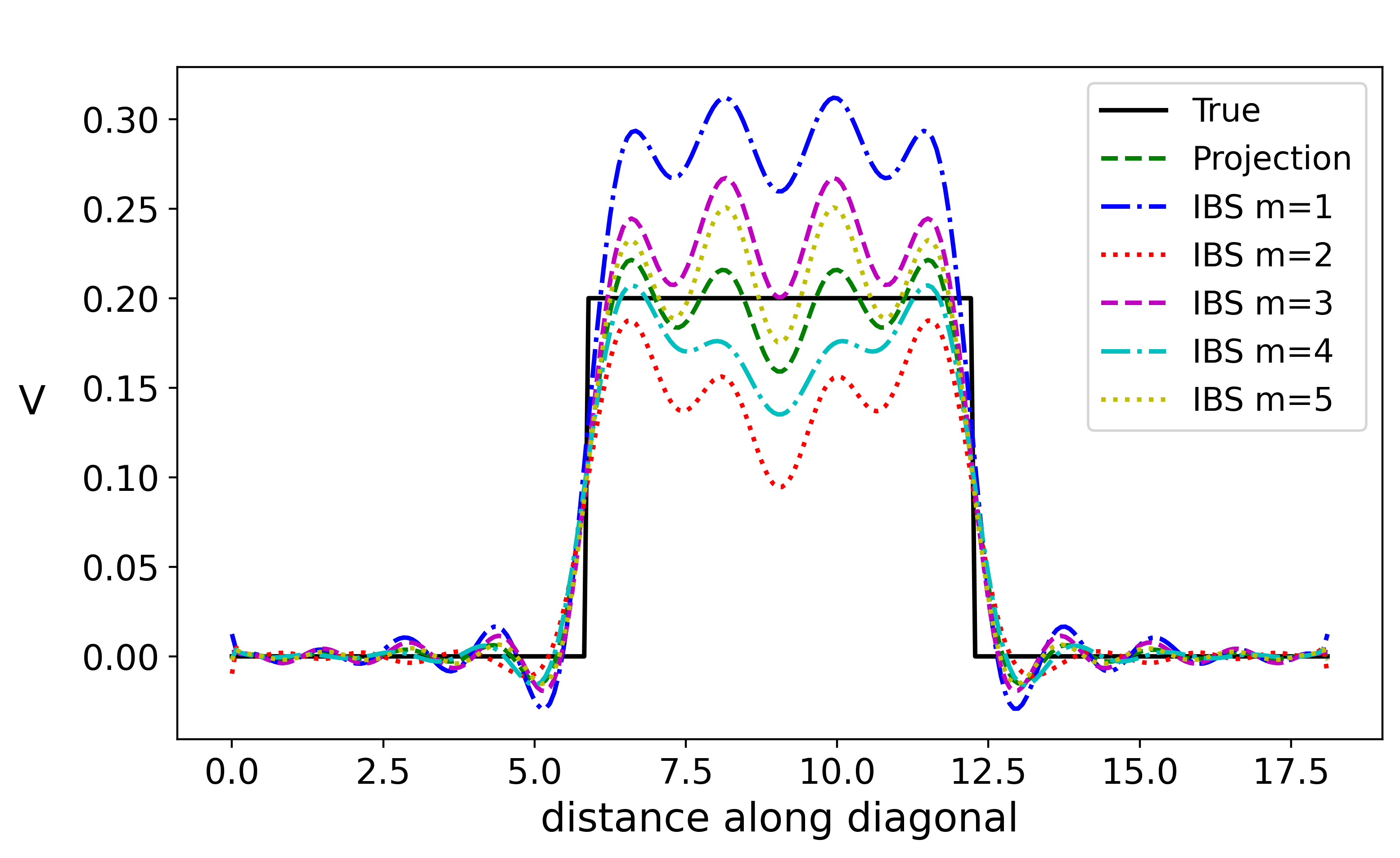}
        \caption{Anti-diagonal cross section for IBS}
        \label{fig:antichiral-disk-mid-slice-ibsall}
    \end{subfigure}%

    \scriptsize
\setlength{\tabcolsep}{3pt}%
\resizebox{\linewidth}{!}{
\begin{tabular}{@{}l|cccccc|ccccc@{}}
    \toprule
     & Projection &
     IBS1 & IBS2 & IBS3 & IBS4 & IBS5 &
     RIBS1 & RIBS2 & RIBS3 & RIBS4 & RIBS5 \\
    \midrule
    Relative error &
    0.2186 &   
    0.4442 &   
    0.3148 &   
    0.2616 &   
    0.2415 &   
    0.2298 &   
    0.4442 &   
    0.3148 &   
    0.2370 &   
    0.2351 &   
    0.2243 \\  
    \bottomrule
\end{tabular}}

    \caption{Reconstructions of a medium contrast disk (anti-chiral model)}
    \label{fig:antichiral-disk-mid}
\end{figure}
\begin{figure}[htbp]
    \centering

    \begin{subfigure}[b]{\textwidth}
        \centering
        \includegraphics[width=\textwidth]{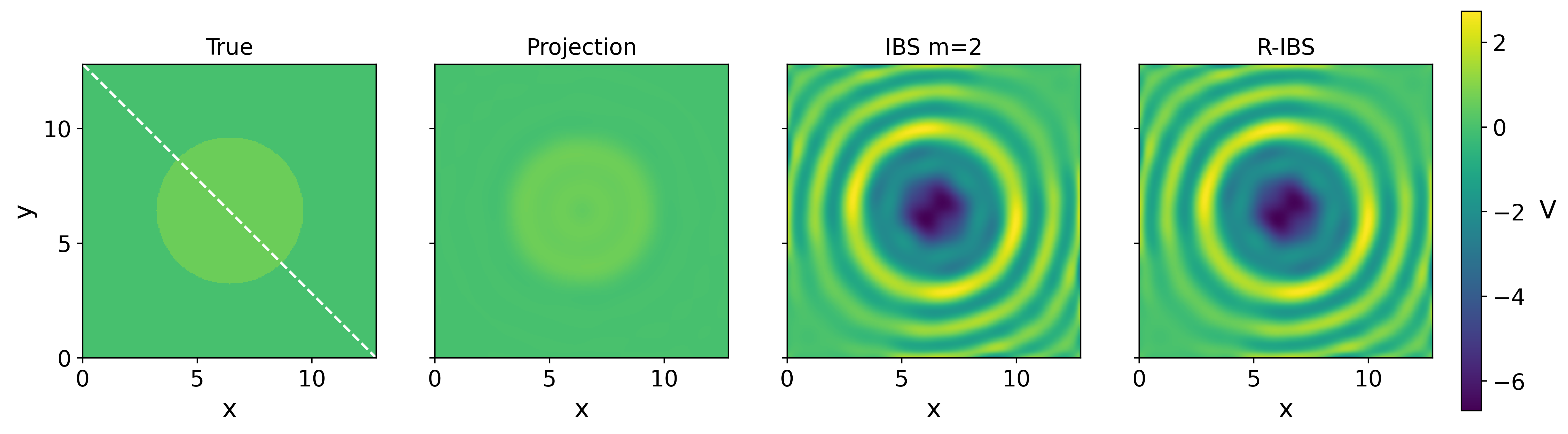}
        \caption{Reconstructions of $V$}
        \label{fig:antichiral-disk-high-global}
    \end{subfigure}

    \par\vspace{0.5em}

    \begin{subfigure}[b]{0.5\textwidth}
        \centering
        \includegraphics[width=\textwidth]{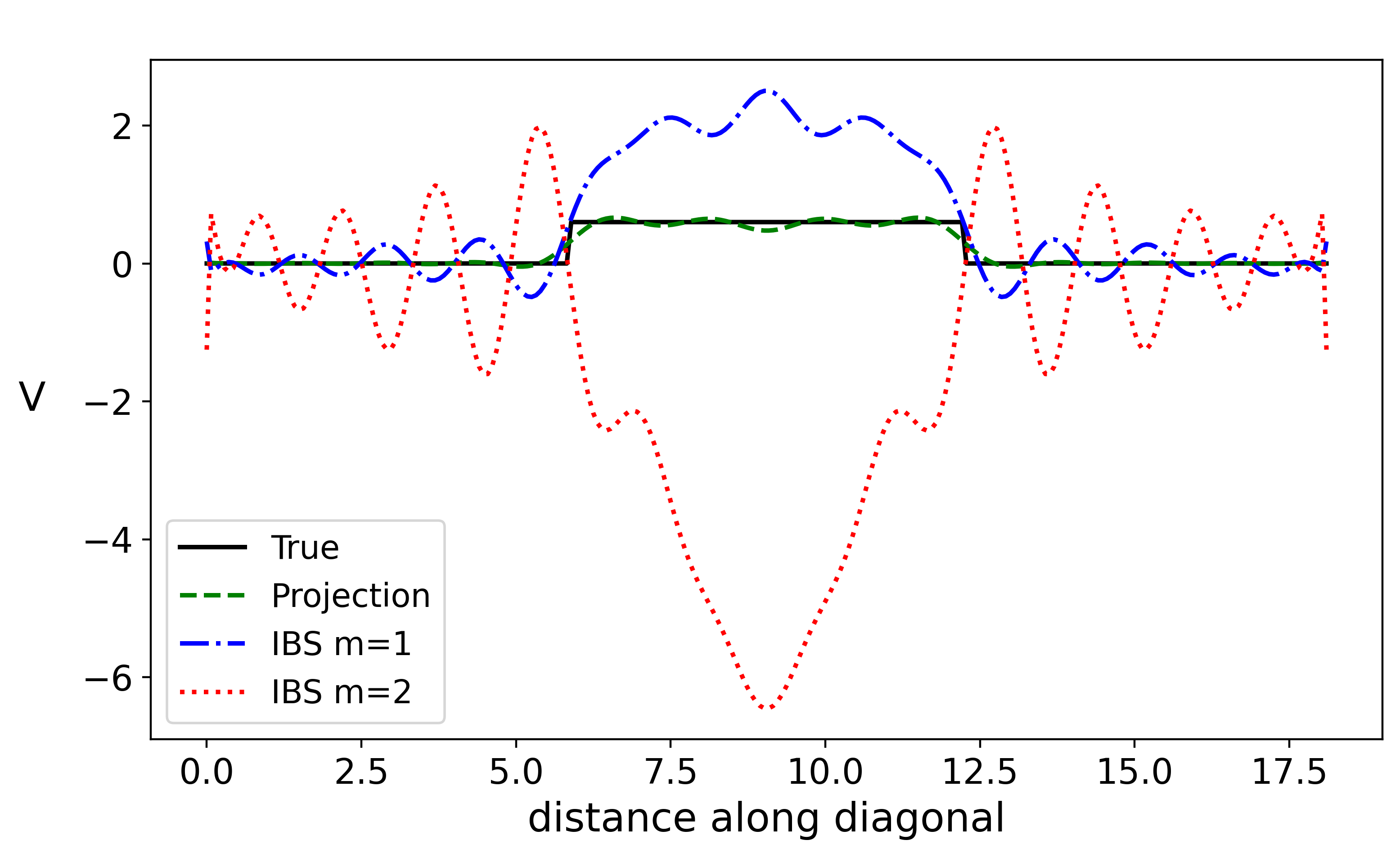}
        \caption{Anti-diagonal cross section}
        \label{fig:antichiral-disk-high-slice}
    \end{subfigure}

    \caption{Reconstructions of a high contrast disk (anti-chiral model)}
    \label{fig:antichiral-disk-high}
\end{figure}
\begin{figure}[htbp]
    \centering
    \begin{subfigure}[b]{0.9\textwidth}
        \centering
        \includegraphics[width=\textwidth]{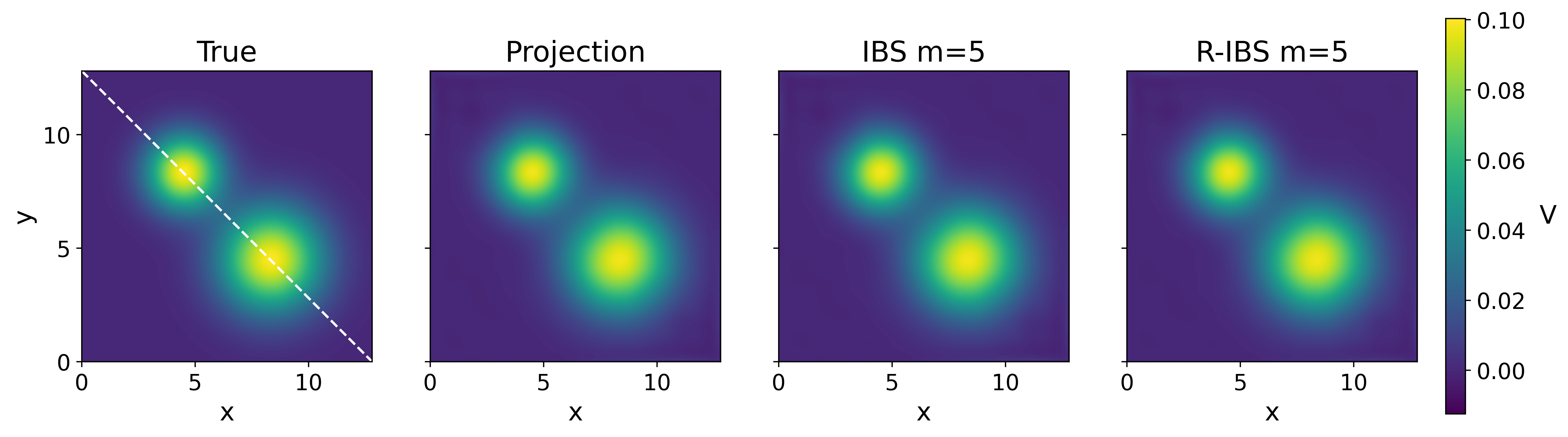}
        \caption{Reconstructions of $V$}
        \label{fig:antichiral-gauss-low-global}
    \end{subfigure}

    \vspace{0.6em}

    \begin{subfigure}[b]{0.45\textwidth}
        \centering
        \includegraphics[width=\textwidth]{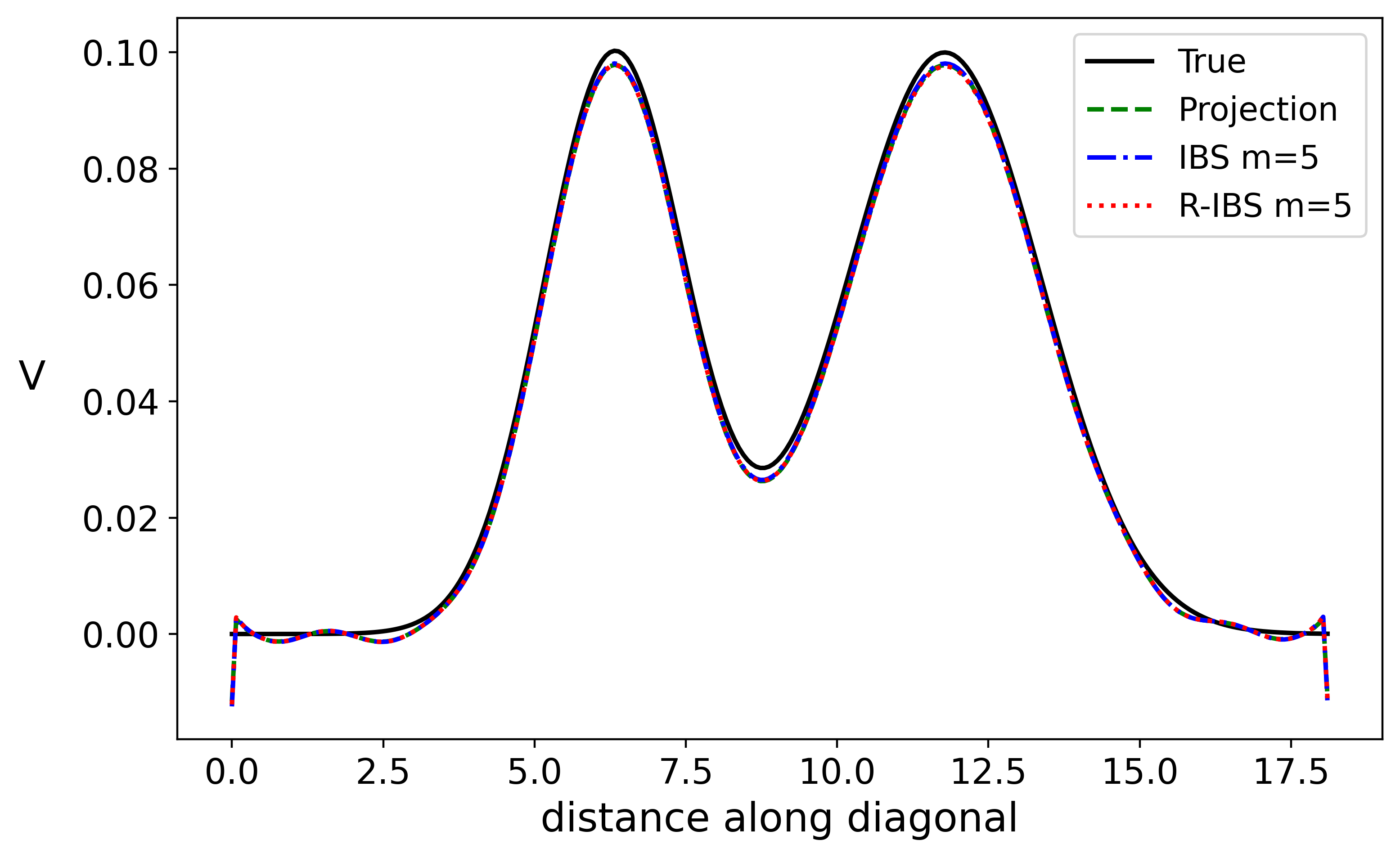}
        \caption{Anti-diagonal cross section}
        \label{fig:antichiral-gauss-low-slice-main}
    \end{subfigure}%

    \vspace{0.6em}

    \begin{subfigure}[b]{0.45\textwidth}
        \centering
        \includegraphics[width=\textwidth]{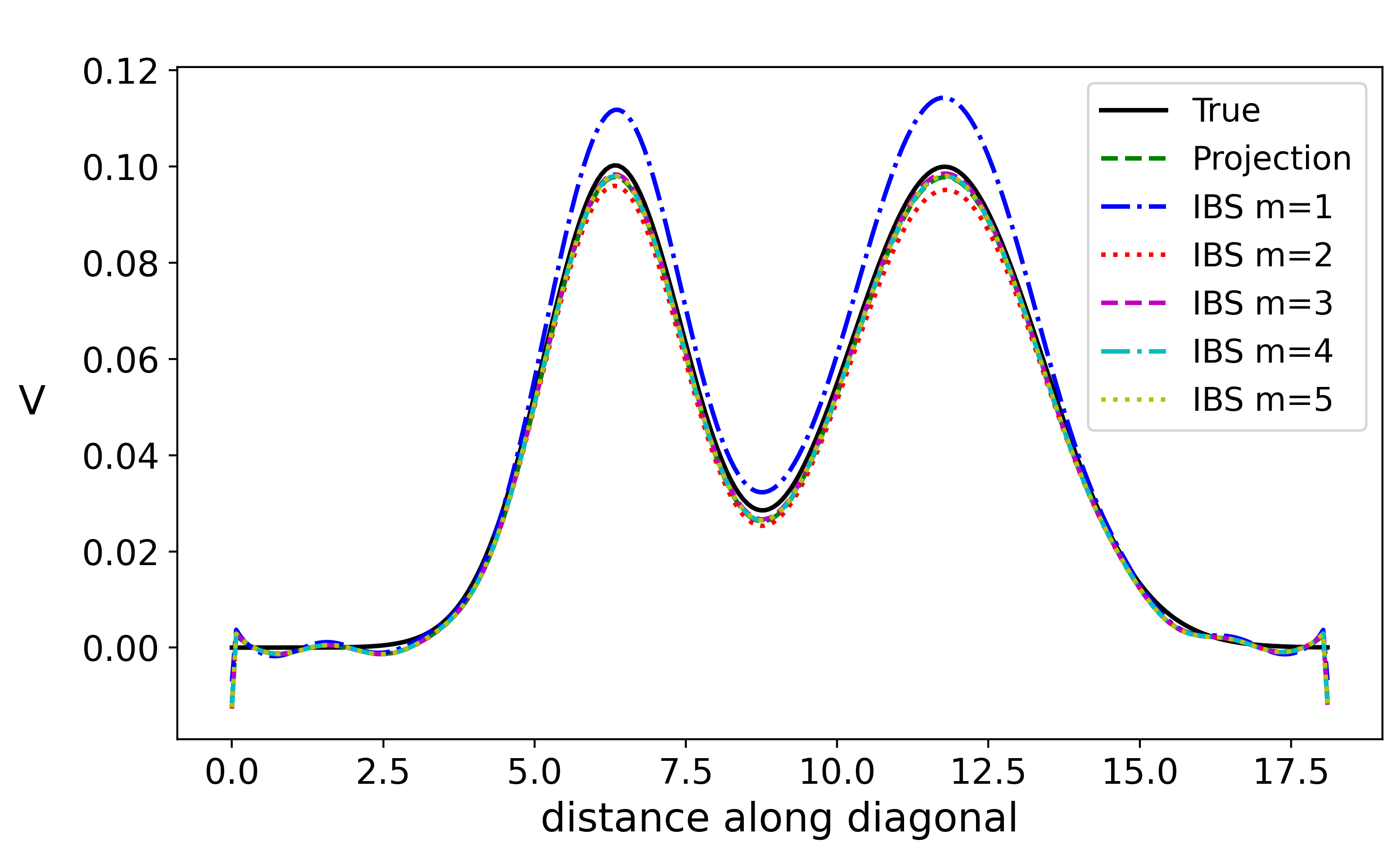}
        \caption{Anti-diagonal cross section for IBS}
        \label{fig:antichiral-gauss-low-slice-ibsall}
    \end{subfigure}%

    \scriptsize
\setlength{\tabcolsep}{3pt}%
\resizebox{\linewidth}{!}{
\begin{tabular}{@{}l|cccccc|ccccc@{}}
    \toprule
     & Projection &
     IBS1 & IBS2 & IBS3 & IBS4 & IBS5 &
     RIBS1 & RIBS2 & RIBS3 & RIBS4 & RIBS5 \\
    \midrule
    Relative error &
    0.0302 &   
    0.1203 &   
    0.0427 &   
    0.0281 &   
    0.0295 &   
    0.0293 &   
    0.1203 &   
    0.0427 &   
    0.0291 &   
    0.0306 &   
    0.0304 \\  
    \bottomrule
\end{tabular}}

    \caption{Reconstructions of two low contrast Gaussians (anti-chiral model)}
    \label{fig:antichiral-gauss-low}
\end{figure}
\begin{figure}[htbp]
    \centering
    \begin{subfigure}[b]{0.9\textwidth}
        \centering
        \includegraphics[width=\textwidth]{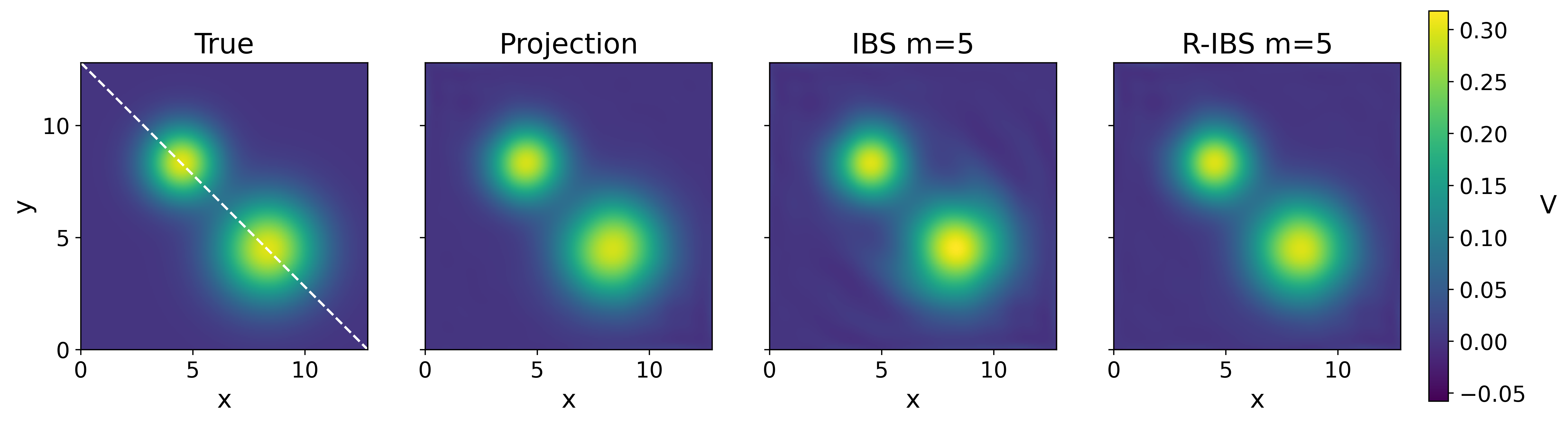}
        \caption{Reconstructions of $V$}
        \label{fig:antichiral-gauss-mid-global}
    \end{subfigure}

    \vspace{0.6em}

    \begin{subfigure}[b]{0.45\textwidth}
        \centering
        \includegraphics[width=\textwidth]{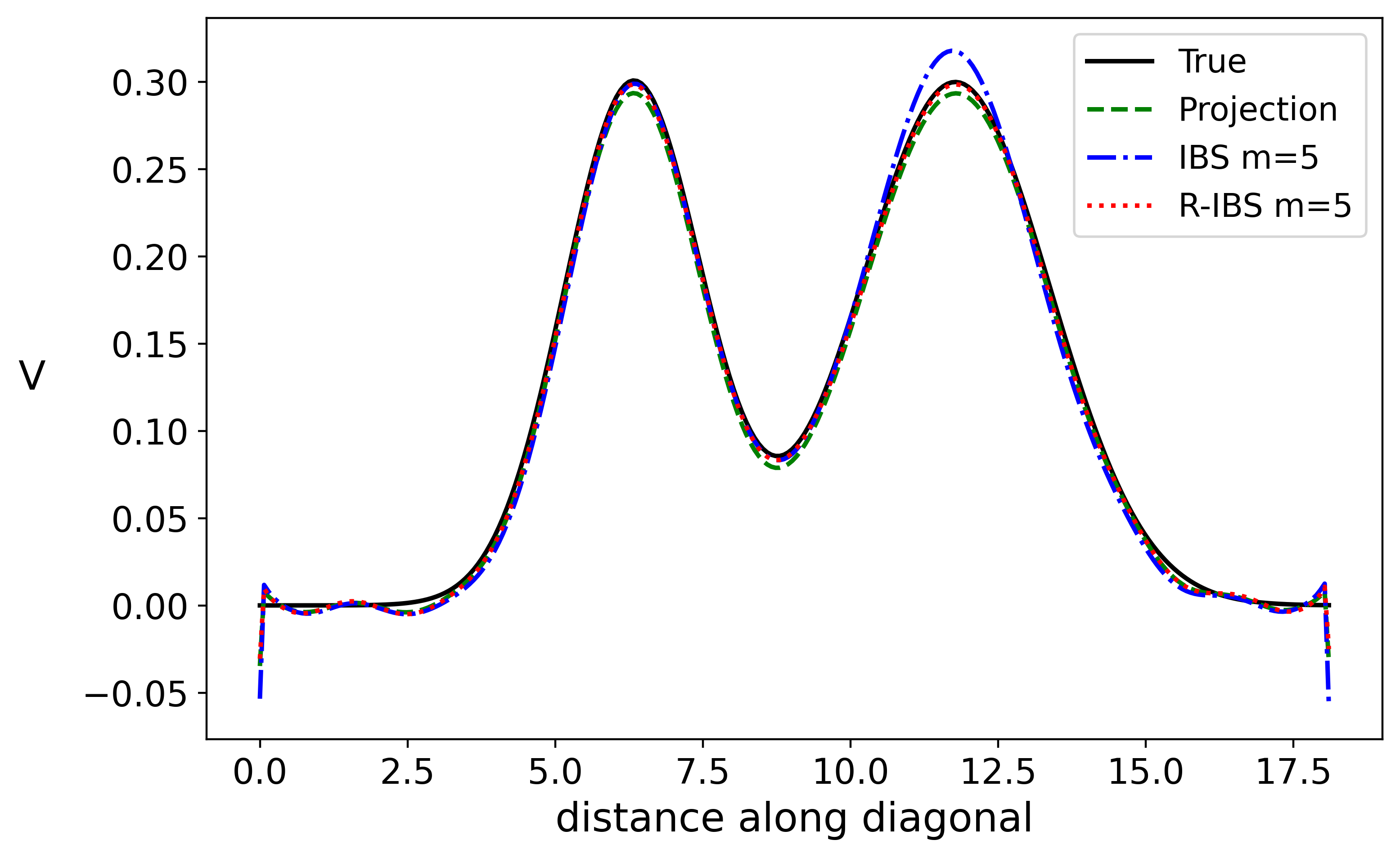}
        \caption{Anti-diagonal cross section }
        \label{fig:antichiral-gauss-mid-slice-main}
    \end{subfigure}%

    \vspace{0.6em}

    \begin{subfigure}[b]{0.45\textwidth}
        \centering
        \includegraphics[width=\textwidth]{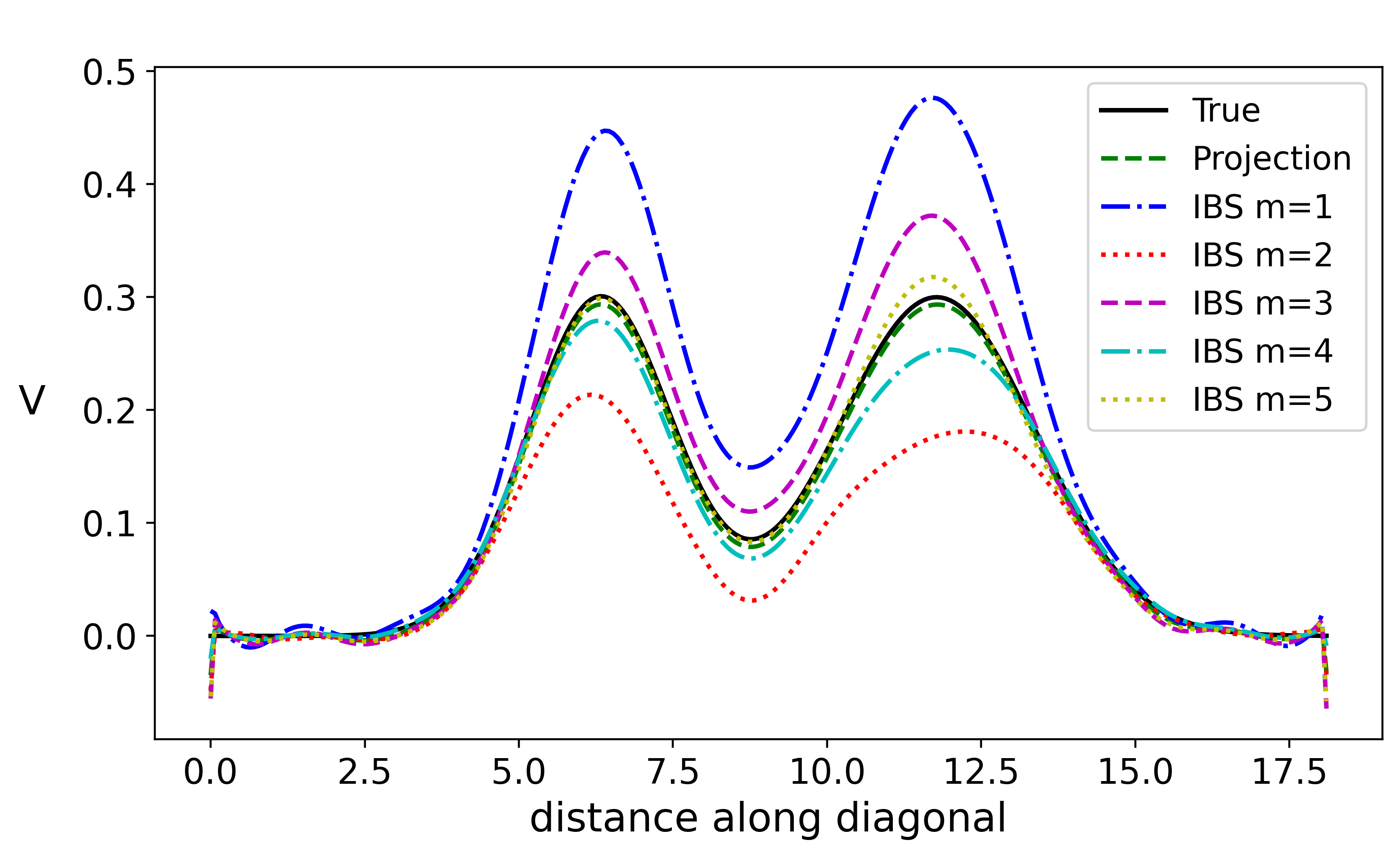}
        \caption{Anti-diagonal cross section for IBS}
        \label{fig:antichiral-gauss-mid-slice-ibsall}
    \end{subfigure}%

    \scriptsize
\setlength{\tabcolsep}{3pt}%
\resizebox{\linewidth}{!}{
\begin{tabular}{@{}l|cccccc|ccccc@{}}
    \toprule
     & Projection &
     IBS1 & IBS2 & IBS3 & IBS4 & IBS5 &
     RIBS1 & RIBS2 & RIBS3 & RIBS4 & RIBS5 \\
    \midrule
    Relative error &
    0.0302 &   
    0.4619 &   
    0.2986 &   
    0.1756 &   
    0.1216 &   
    0.0801 &   
    0.4619 &   
    0.2986 &   
    0.1084 &   
    0.1156 &   
    0.0403 \\  
    \bottomrule
\end{tabular}}

    \caption{Reconstructions of two medium contrast Gaussian scatterers (anti-chiral model)}
    \label{fig:antichiral-gauss-mid}
\end{figure}
\begin{figure}[htbp]
    \centering

    \begin{subfigure}[b]{\textwidth}
        \centering
        \includegraphics[width=\textwidth]{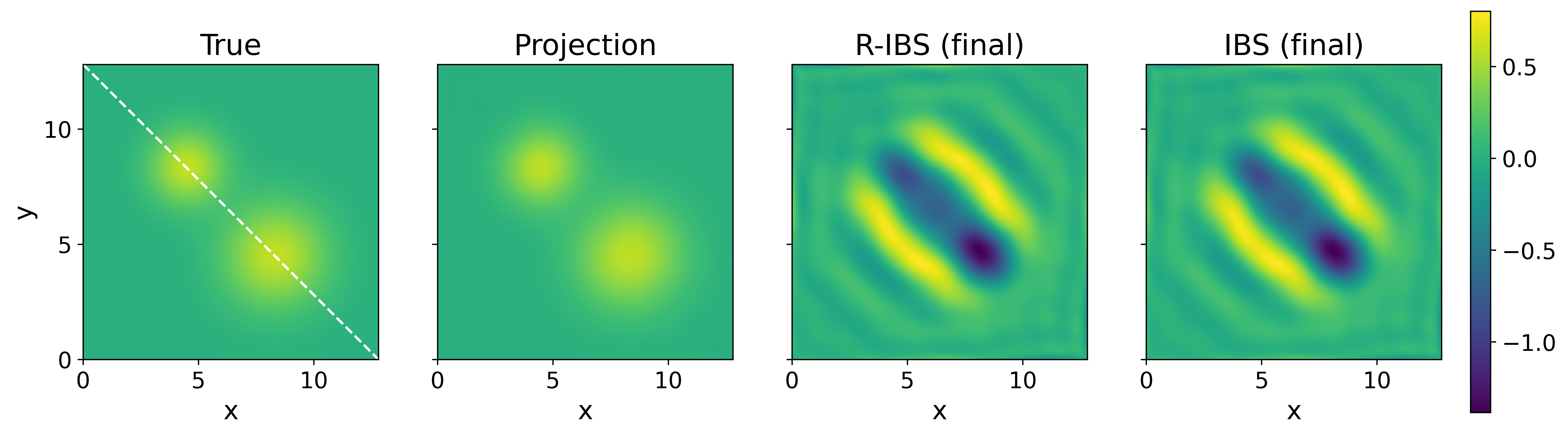}
        \caption{Reconstructions of $V$}
        \label{fig:antichiral-gauss-high-global}
    \end{subfigure}

    \par\vspace{0.5em}

    \begin{subfigure}[b]{0.5\textwidth}
        \centering
        \includegraphics[width=\textwidth]{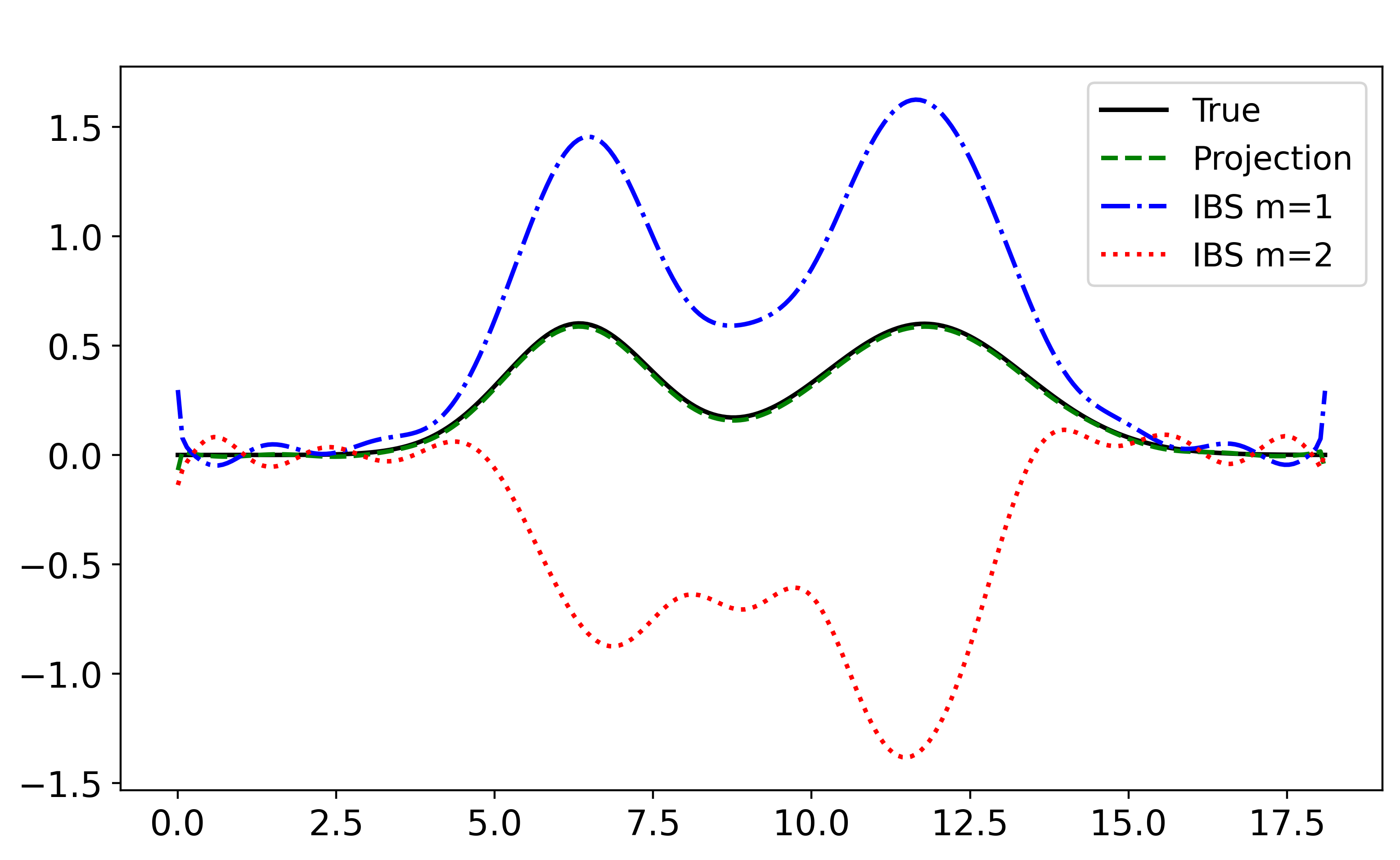}
        \caption{Anti-diagonal cross section}
        \label{fig:antichiral-gauss-high-slice}
    \end{subfigure}

    \caption{Reconstructions of two high contrast Gaussian scatterers (anti-chiral model)}
    \label{fig:antichiral-gauss-high}
\end{figure}
\begin{figure}[htbp]
    \centering
    \begin{subfigure}[b]{\textwidth}
        \centering
        \includegraphics[width=\textwidth]{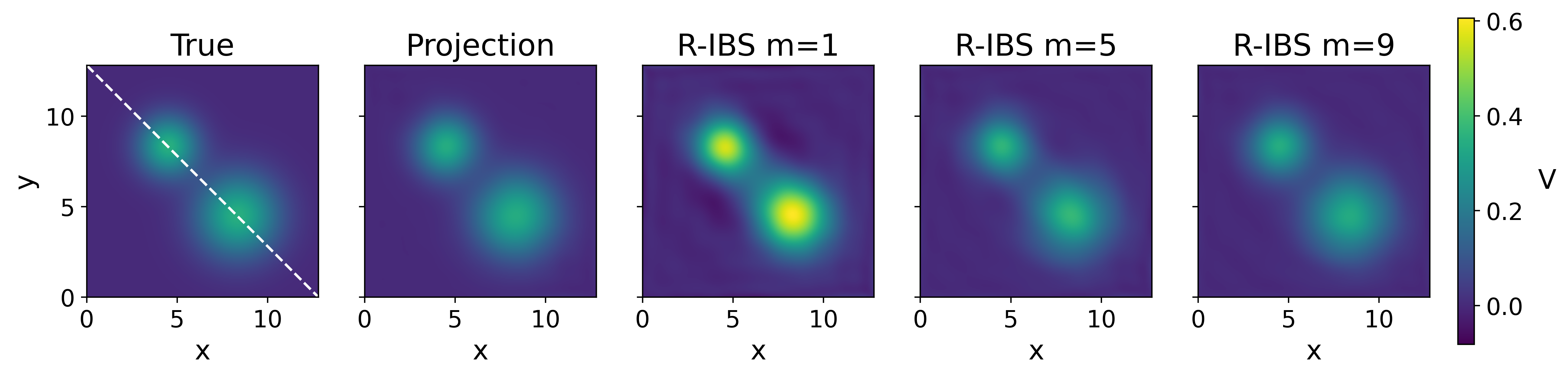}
        \caption{Reconstructions of $V$}
        \label{fig:antichiral-fullribs-global}
    \end{subfigure}

    \par\vspace{0.5em}

    \begin{subfigure}[b]{0.5\textwidth}
        \centering
        \includegraphics[width=\textwidth]{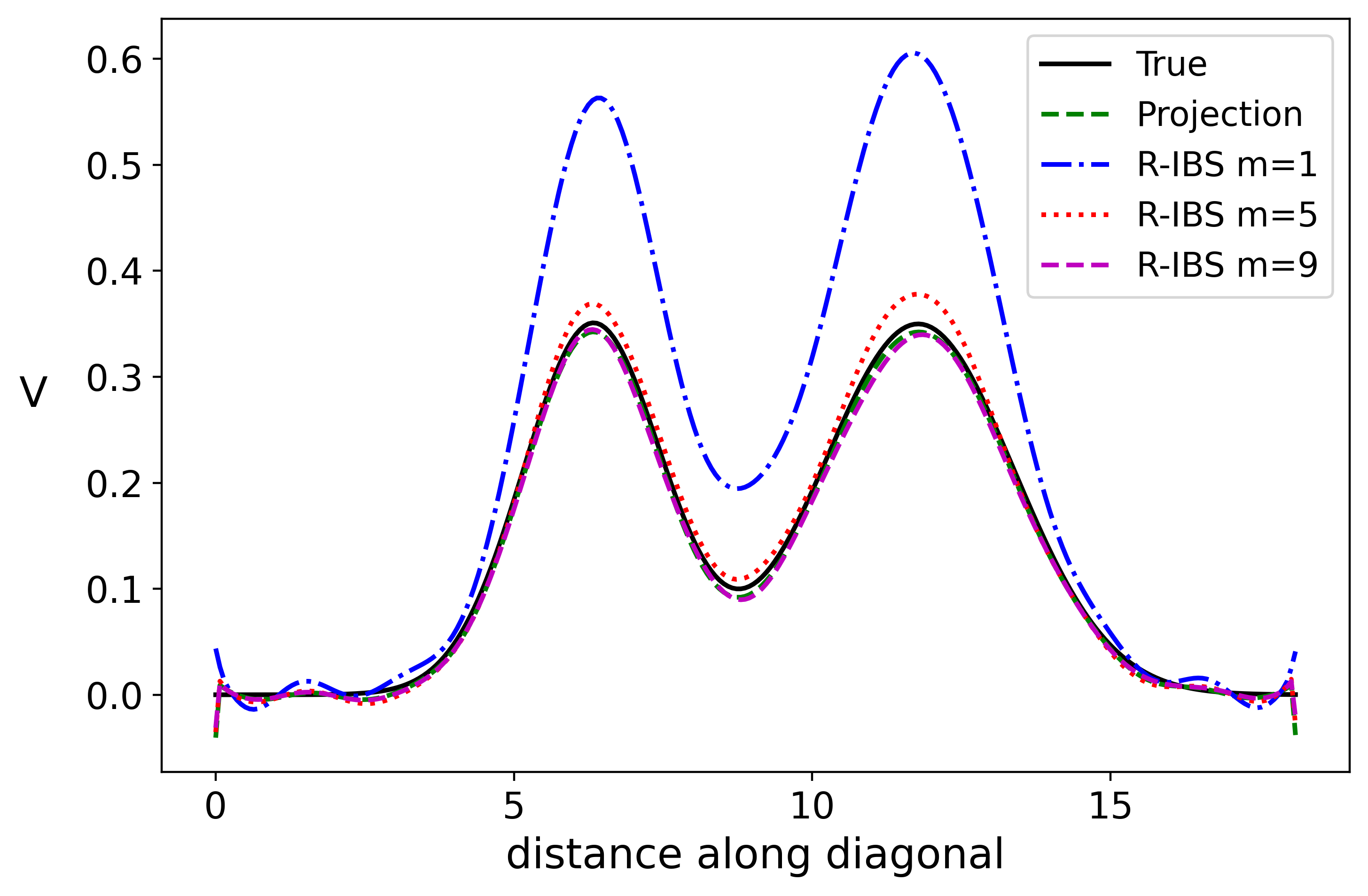}
        \caption{Anti-diagonal cross section}
        \label{fig:antichiral-fullribs-slice-antidiag}
    \end{subfigure}

    \scriptsize
\setlength{\tabcolsep}{3pt}%
\resizebox{\linewidth}{!}{
\begin{tabular}{@{}l|cccccccccc@{}}
    \toprule
     & Projection &
     RIBS1 & RIBS2 & RIBS3 & RIBS4 & RIBS5 &
     RIBS6 & RIBS7 & RIBS8 & RIBS9 \\
    \midrule
    Relative error &
    0.0302 &   
    0.5711 &   
    0.4496 &   
    0.2285 &   
    0.2286 &   
    0.1015 &   
    0.1352 &   
    0.0663 &   
    0.0956 &   
    0.0615 \\  
    \bottomrule
\end{tabular}}

    \caption{Reconstructions of two medium contrast Gaussian scatterers for the anti-chiral model using RIBS}
    \label{fig:antichiral-fullribs}
\end{figure}
The numerical experiments for the chiral model made use of the computational domain
\[
\Omega = [0,2.0]\times[-8.0,8.0],
\]
with wave number \(k=2.0\).
A total of \(64\) incident waves were generated, whose initial conditions are prescribed by
\[
\psi^{(j)}(0,y)=
\begin{pmatrix}
\exp\!\left(-\dfrac{(y-y_c^{(j)})^2}{0.08}\right)\\[6pt]
0
\end{pmatrix},
\quad y\in[-8,8],
\]
where the centers \(y_c^{(j)}\) are uniformly distributed in the interval \([-3.2,3.2]\).
The \(x\)-direction is discretized using \(400\) time steps, while the \(y\)-direction is discretized using \(1600\) spatial grid points. At the measurement location \(x=2\), \(1600\) detectors are uniformly placed along the transverse direction. To avoid the inverse crime, the synthetic data are generated on a finer grid with \(800\) time steps in the \(x\)-direction and \(3200\) spatial grid points in the \(y\)-direction. The Tikhonov regularization parameter is fixed as \(0.001\) throughout all experiments.

We investigate the reconstruction of two types of scattering potentials with low, medium, and high contrast.
The first is a disk defined by
\begin{align*}
V(x,y)
=
\begin{cases}
\sigma, & (x-1.0)^2 + y^2 \le (0.4)^2,\\[6pt]
0, & \text{otherwise},
\end{cases}
\end{align*}
where the contrast parameter \(\sigma\) is chosen as \(0.1\), \(0.5\), and \(2.0\).
The corresponding numerical reconstructions are presented in
Figures~\ref{fig:chiral-disk-low}, \ref{fig:chiral-disk-mid}, and \ref{fig:chiral-disk-high}.

The second potential consists of two Gaussian components,
\[
V(x,y)
=\sigma\left[
\exp\!\left(-\frac{(x-0.5)^2+y^2}{0.08}\right)
+
\exp\!\left(-\frac{(x-1.5)^2+y^2}{0.08}\right)
\right],
\]
where the parameter \(\sigma\) is again selected as \(0.1\), \(0.5\), and \(2.0\).
The corresponding results are shown in
Figures~\ref{fig:chiral-gauss-low}, \ref{fig:chiral-gauss-mid}, and \ref{fig:chiral-gauss-high}.
In addition, we apply the RIBS with 10 terms  with \(\sigma=0.5\).
The results are displayed in Figure~\ref{fig:chiral-fullribs}.

For the anti-chiral model, the computational domain is chosen as
\[
\Omega = [0,25.6]\times[0,25.6],
\]
with wave number \(k=1.0\).
The computational domain is discretized by a \(256\times256\) uniform grid, with all grid points on the boundary serving as detectors. To avoid the inverse crime, the synthetic data are generated independently on a finer \(512\times512\) uniform grid.
We again employ \(64\) incident waves of the form
\[
\psi_i(\theta)(x,y)
=
\begin{pmatrix}
\sin\!\left(\dfrac{\theta}{2}\right)\\[6pt]
-\cos\!\left(\dfrac{\theta}{2}\right)
\end{pmatrix}
\exp\!\left(\mathrm{i}k\big(x\cos\theta + y\sin\theta\big)\right),
\]
where the incident angles \(\theta\) are uniformly sampled from \([0,2\pi)\). The Tikhonov regularization parameter is fixed as \(0.05\) throughout all experiments.

Two classes of scattering potentials are considered.
The first is a circular disk given by
\begin{align*}
V(x,y)
=
\begin{cases}
\sigma, & (x-12.8)^2 + (y-12.8)^2 \le (6.4)^2,\\[6pt]
0, & \text{otherwise},
\end{cases}
\end{align*}
where the contrast  \(\sigma\) is set to \(0.1\), \(0.2\), and \(0.4\).
The corresponding reconstructions are shown in
Figures~\ref{fig:antichiral-disk-low}, \ref{fig:antichiral-disk-mid}, and \ref{fig:antichiral-disk-high}.

The second consists of two Gaussian-shaped scatterers,
\begin{align*}
V(x,y)
&=\sigma\Bigg[
\exp\!\left(
-\frac{(x-16.64)^2+(y-8.96)^2}{20.48}
\right)
+
\exp\!\left(
-\frac{(x-8.96)^2+(y-16.64)^2}{10.83}
\right)
\Bigg],
\end{align*}
where the intensity  \(\sigma\) is chosen as \(0.1\), \(0.3\), and \(0.6\).
The corresponding numerical results are presented in
Figures~\ref{fig:antichiral-gauss-low}, \ref{fig:antichiral-gauss-mid}, and \ref{fig:antichiral-gauss-high}.
Finally, we test the RIBS algorithm for this configuration with \(\sigma=0.35\).
The results are reported in Figure~\ref{fig:antichiral-fullribs}.

For both models, the numerical results exhibit the same qualitative behavior.
In the low-contrast regime, one or two IBS terms are sufficient to accurately recover the scattering potential.
As the contrast increases to a moderate level, a larger number of IBS terms is required for convergence, leading to improved reconstruction quality. 
In the high-contrast regime, however, the reconstruction fails.
These observations are consistent with the theoretical analysis in \cite{hoskinsAnalysisInverseBorn2022}.

Overall, the anti-chiral model yields better reconstruction quality than the chiral model.
This may seem counterintuitive, since the anti-chiral model is elliptic while the chiral
model is hyperbolic, and elliptic inverse problems are typically considered more ill-posed
than hyperbolic ones. The improved performance of the anti-chiral model is therefore not
due to ellipticity itself, but rather to the richer measurement data available in this
case. Specifically, the chiral model only provides final-time data, which may be viewed
as one-sided boundary measurements, whereas the anti-chiral model provides full boundary
data.

Another noteworthy observation is that the RIBS performs well for both models, producing reconstruction results that are comparable to, or even better than, those obtained using the IBS.
This suggests that using the RIBS can significantly reduce the computational cost without sacrificing reconstruction accuracy.
\section{Discussion}
In conclusion, we have investigated the IBS and the RIBS for two Dirac equations arising from the quantum optics of chiral and anti-chiral quantum waveguide arrays. We established the convergence properties of the proposed series and conducted extensive numerical experiments to demonstrate the effectiveness of the reconstruction algorithms. For both models, accurate reconstructions of the scattering potentials are achieved. Furthermore, the numerical results for the RIBS indicate that the cancellation phenomenon described in holds in both models. A deeper theoretical understanding of this behavior, as well as its implications for stability and convergence, will be the subject of future investigations.

\appendix
In the chiral model, the data consist of measurements of the scattered field at the distance $\psi_s(L_x,y)$, and the objective is to reconstruct the scattering potential $V$. In the anti-chiral model, measurements are taken on the boundary $\psi_s|_{\partial\Omega}$, and the goal is to recover $V$.

\section{Quantitative Estimate of $\mu_a$}\label{Quantitative Estimate}

The constants $\nu_c$ and $\nu_a$ can be computed explicitly; in our numerical experiments, they both take the value one. For $\mu_a$ defined in \eqref{def of mua}, we give an estimate here. To proceed, we require several estimates for Hankel functions. 
\begin{lem}\cite{freitasSharpBoundsModulus2018}
     For all $x>0$ and $l\geq 0$, define
     \begin{equation*}
        M_l(x) = |H^{(1)}_l(x)|^2.
     \end{equation*}
     Then the following estimates hold:
     \begin{itemize}
         \item $\dfrac{d}{dx}\big(M_0(x)\big)<0$.
         \item $M_1(x)\leq\dfrac{4}{\pi^2 x^{2}}+\dfrac{2}{\pi x}$.
         \item $M_0(x)\leq 1+\dfrac{4}{\pi^2}\big(\gamma+\log (x / 2)\big)^2$, where $\gamma$ denotes Euler’s constant.
         \item $M_0(x)\leq \dfrac{2}{\pi x}$, for $x\geq 1$.
     \end{itemize}
\end{lem}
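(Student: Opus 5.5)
The plan is to derive all four estimates from Nicholson's integral representation
\[
M_l(x)=J_l(x)^2+Y_l(x)^2=\frac{8}{\pi^2}\int_0^\infty K_0(2x\sinh t)\cosh(2lt)\,dt,\qquad x>0,
\]
where $K_0$ is the modified Bessel function. Three facts about $K_0$ will be used: it is positive and strictly decreasing on $(0,\infty)$, and
\[
\int_0^\infty K_0(u)\,du=\frac{\pi}{2},\qquad \int_0^\infty uK_0(u)\,du=1.
\]

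\textbf{First bullet.} For $l=0$ the integrand $K_0(2x\sinh t)$ is, for each fixed $t>0$, strictly decreasing in $x$. Differentiation under the integral sign is justified because $K_0'=-K_1$ and the resulting integrand is integrable locally uniformly in $x>0$. This gives $\frac{d}{dx}M_0(x)<0$.

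\textbf{Fourth bullet.} Substitute $u=2x\sinh t$, so $dt=du/(2x\cosh t)$. Since $\cosh t\ge 1$,
\[
M_0(x)\le\frac{8}{\pi^2}\cdot\frac{1}{2x}\int_0^\infty K_0(u)\,du=\frac{2}{\pi x}.
\]
This actually holds for every $x>0$, so in particular for $x\ge1$.

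\textbf{Second bullet.} For $l=1$, write $\cosh 2t=1+2\sinh^2 t$. The part with the $1$ equals $M_0(x)$, which is at most $2/(\pi x)$ by the fourth bullet. For the remaining part, use the same substitution together with $\sinh^2 t/\cosh t\le \sinh t=u/(2x)$:
\[
\frac{16}{\pi^2}\int_0^\infty K_0(2x\sinh t)\sinh^2 t\,dt\le\frac{16}{\pi^2}\int_0^\infty K_0(u)\,\frac{u}{2x}\,\frac{du}{2x}=\frac{4}{\pi^2x^2}.
\]
Adding the two parts gives $M_1(x)\le \frac{4}{\pi^2x^2}+\frac{2}{\pi x}$.

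\textbf{Third bullet.} Split into two ranges of $x$.

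For $x\ge 1$, the fourth bullet gives $M_0(x)\le 2/\pi<1$. The right-hand side is at least $1$, so the bound is immediate.

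For $0<x<1$, use the power series. With $\ell(x):=\gamma+\log(x/2)$, which is negative for $x<1$,
\[
Y_0(x)=\frac{2}{\pi}\ell(x)J_0(x)+\frac{2}{\pi}R(x),\qquad R(x)=\sum_{m\ge1}(-1)^{m+1}H_m\frac{(x/2)^{2m}}{(m!)^2},
\]
where $H_m$ is the $m$-th harmonic number. On $(0,1)$ one has $0<J_0\le 1$, $0\le R(x)\le (x/2)^2$ (the series alternates with decreasing terms), and $1-J_0(x)^2\ge c\,x^2$ for an explicit constant $c$, coming from $J_0(x)\le 1-x^2/4+x^4/64$.

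Expand $Y_0^2$. The term $\frac{4}{\pi^2}\ell^2J_0^2$ is at most $\frac{4}{\pi^2}\ell^2$. The cross term $\frac{8}{\pi^2}\ell J_0R$ is nonpositive, because $\ell<0$ and $R\ge0$. The last term $\frac{4}{\pi^2}R^2=O(x^4)$ is absorbed by $1-J_0^2\ge c\,x^2$. Together these give $M_0=J_0^2+Y_0^2\le 1+\frac{4}{\pi^2}\ell(x)^2$.

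\textbf{Main obstacle.} The hardest step is making the small-$x$ constants in the third bullet fully explicit: verifying the sign and size of $R$ and the lower bound on $1-J_0^2$ uniformly on $(0,1)$. If this gets messy near $x=1$, I would fall back on monotonicity. By the first bullet, $M_0(x)\le M_0(x_0)$ for $x\ge x_0$, and a numerically certified value of $M_0(x_0)$ at some $x_0<1$ should be below $1$. The series argument would then only be needed on $(0,x_0)$.
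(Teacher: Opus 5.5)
Your argument is correct. It differs from the paper, which gives no proof of this lemma and simply imports the four bounds from \cite{freitasSharpBoundsModulus2018}. You instead derive everything in a self-contained way from Nicholson's formula together with $\int_0^\infty K_0(u)\,du=\pi/2$ and $\int_0^\infty uK_0(u)\,du=1$. The substitution $u=2x\sinh t$ with $\cosh t\ge 1$ gives the fourth bullet, and the splitting $\cosh 2t=1+2\sinh^2 t$ with $\tanh t\le 1$ gives the second. The third bullet for $x\ge 1$ then follows at once.

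The step you flag as the main obstacle is easy and needs no numerical fallback. On $(0,1)$ you have $1-J_0^2\ge 1-J_0\ge \frac{x^2}{4}-\frac{x^4}{64}\ge \frac{15}{64}x^2$, while $\frac{4}{\pi^2}R^2\le \frac{x^4}{4\pi^2}$. Since $\frac{x^4}{4\pi^2}\le \frac{15}{64}x^2$ whenever $x^2\le 15\pi^2/16$, the absorption works with room to spare.

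Beyond self-containedness, your route yields a slightly stronger statement: $M_0(x)\le 2/(\pi x)$ for every $x>0$, not only for $x\ge 1$. The appendix theorem actually needs this. There the bound $|H^{(1)}_0(kr)|\le\sqrt{2/(\pi kr)}$ is applied for all $r\in[1,R]$, and for $k<1$ this includes arguments $kr<1$ that the lemma as stated does not cover. The citation, by contrast, keeps the appendix short and points to the sharper estimates available in the cited work.
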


\begin{remark}
    The asymptotic behavior of $|H^{(1)}_{l}(x)|$ is 
    \[
    |H_l^{(1)}(x)| \sim \sqrt{\frac{2}{\pi x}}.
    \]
    Hence, the above estimates are sharp for large $x$.
\end{remark}
\begin{thm}
    Suppose that the spatial domain $\Omega$ is contained in a disk of radius $R>1$. Then we have the estimate:
$$
\mu_a\leq\frac{k^2}{4}\left(I(k)
    +\frac{4}{3} \sqrt{\frac{2 \pi}{k}}\big(R^{3 / 2}-1\big)+\frac{2\pi^2 k}{3}\left(\left(\frac{2}{\pi k} R+\frac{4}{\pi^2 k^2}\right)^{3 / 2}-\left(\frac{2}{k\pi}\right)^{3 }\right)\right) ,
$$
where 
\[
I(k) :=
\begin{cases}
\pi + 2\left(\gamma+\log\frac{k}{2}-\tfrac{1}{2}\right) + \dfrac{8}{k^2}e^{-2\gamma}, & k \;\geq\; 2e^{-\gamma}, \\[1.2em]
(\pi+1) - 2\gamma - 2\log\frac{k}{2}, & k \;<\; 2e^{-\gamma}.
\end{cases}
\]
\end{thm}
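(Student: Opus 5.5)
We bound the matrix norm of $G$ pointwise and then integrate in polar coordinates. Since $\Omega$ lies in a disk of radius $R$, for every $\mathbf{x}\in\Omega$ the set $\Omega$ lies in a disk of radius $R$ about $\mathbf{x}$, after enlarging to radius $R$ in the worst case. This gives
\[
\int_\Omega \|G(\mathbf{x},\mathbf{x}')\|\,d\mathbf{x}' \le \int_0^{R}\!\!\int_0^{2\pi} \|G(\rho)\|\,\rho\,d\theta\,d\rho .
\]
Here $\|G(\rho)\|$ depends only on $\rho=|\mathbf{x}-\mathbf{x}'|$. From \eqref{Def of green}, using that $\hat e_x\beta+\hat e_y\alpha$ is a unitary reflection (with eigenvalues $\pm1$), we get
\[
\|G\|\le \tfrac{k}{4}\big(|H_0^{(1)}(k\rho)|+|H_1^{(1)}(k\rho)|\big)=\tfrac{k}{4}\big(M_0^{1/2}+M_1^{1/2}\big).
\]
Together with the prefactor $k$ in \eqref{def of mua}, this produces the factor $k^2/4$. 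The factor $2\pi$ from the angular integration, and the constant relating $\|\cdot\|_1$ to the spectral norm, must be absorbed into the displayed constants. I will track these carefully and, if needed, use $\|\cdot\|_1$ bounded entrywise.

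Next I split the radial integral into the $H_0$ part and the $H_1$ part.

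For the $H_0$ part, I cut at $\rho=1$ (recall $R>1$).

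On $[1,R]$, I would use $M_0(k\rho)\le 2/(\pi k\rho)$, valid when $k\rho\ge1$, so $\sqrt{M_0}\le\sqrt{2/(\pi k\rho)}$. Integrating $\rho\sqrt{2/(\pi k\rho)}$ gives
\[
\tfrac23\sqrt{\tfrac{2}{\pi k}}\,\big(R^{3/2}-1\big).
\]
Multiplied by $2\pi$, this yields the term $\tfrac43\sqrt{2\pi/k}\,(R^{3/2}-1)$.

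On the near region I use $\sqrt{M_0}\le 1+\tfrac{2}{\pi}|\gamma+\log(k\rho/2)|$ and integrate $\rho(\cdot)$ explicitly. The sign change of $\gamma+\log(k\rho/2)$ occurs at $\rho_*=2e^{-\gamma}/k$. This is exactly the source of the two cases in $I(k)$:
\begin{itemize}
\item If $k<2e^{-\gamma}$, then $\rho_*>1$ and the log term keeps one sign on $[0,1]$. A direct computation of $\int_0^1\rho\log\rho\,d\rho=-1/4$ gives $(\pi+1)-2\gamma-2\log(k/2)$.
\item If $k\ge2e^{-\gamma}$, I split the integral at $\rho_*$. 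The extra $e^{-2\gamma}/k^2$ term arises from the $\rho_*^2$ boundary contributions.
\end{itemize}
The monotonicity of $M_0$ (the first bullet of the lemma) can be used to handle the region where $k\rho<1$ but $\rho>1$, or to justify replacing the bound by its value at an endpoint.

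For the $H_1$ part, I use $M_1(x)\le \frac{4}{\pi^2x^2}+\frac{2}{\pi x}$. Then I need $\int_0^R\rho\sqrt{a/\rho^2+b/\rho}\,d\rho=\int_0^R\sqrt{a+b\rho}\,d\rho$, with $a=4/(\pi^2k^2)$ and $b=2/(\pi k)$. This integrates exactly to $\frac{2}{3b}\big((a+bR)^{3/2}-a^{3/2}\big)$. Since $a^{3/2}=(2/(\pi k))^3$, this reproduces the last term $\frac{2\pi^2k}{3}(\cdots)$ after the angular factor $2\pi$.

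The main obstacle is the bookkeeping in the near-field $H_0$ integral: matching exactly the constants in $I(k)$ in both cases. This requires a careful choice of where to use $\sqrt{1+t^2}\le 1+|t|$ and how to split at $\rho_*$. I would first compute $\int_0^1\rho\,|\gamma+\log(k\rho/2)|\,d\rho$ in closed form in each regime, then adjust the splitting until the stated expression appears.
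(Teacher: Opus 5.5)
Your computational steps match the paper's proof. These are the split of the radial integral at $\rho=1$, and the near-field bound $\sqrt{M_0}\le 1+\tfrac{2}{\pi}|\gamma+\log(k\rho/2)|$ with its sign change at $\rho_*=2e^{-\gamma}/k$. That split does reproduce both cases of $I(k)$, the extra term being $2\rho_*^2=8e^{-2\gamma}/k^2$. The far-field bound $M_0\le 2/(\pi x)$ and the exact integral $\int_0^R\sqrt{a+b\rho}\,d\rho$ for the $H^{(1)}_1$ part also match.

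The gap is in your first step. A disk of radius $R$ containing $\Omega$ does not give $\Omega\subset B(\mathbf{x},R)$ for $\mathbf{x}\in\Omega$. It gives only $\Omega\subset B(\mathbf{x},2R)$, which would put $2R$ in place of $R$ in every term. (The containment would hold if $R$ were the diameter of $\Omega$, e.g. $R=\sqrt{L_x^2+L_y^2}$, but that is not the hypothesis.)

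The paper instead uses a rearrangement (bathtub) argument. We have $|\Omega|\le\pi R^2$. If the integrand is dominated by a nonincreasing function $f(|\mathbf{x}-\mathbf{x}'|)$, then $\int_\Omega f\le\int_{B(\mathbf{x},R)}f$: $|\Omega\setminus B|\le|B\setminus\Omega|$, and $f\le f(R)$ on the former while $f\ge f(R)$ on the latter. This is what the first bullet of the lemma is for, together with the fact that $\frac{4}{\pi^2x^2}+\frac{2}{\pi x}$ is decreasing. For the $H^{(1)}_1$ term you must apply the argument to this majorant, since the lemma says nothing about monotonicity of $M_1$.

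The role you assign to monotonicity, handling the region $k\rho<1<\rho$, does not work. A decreasing $M_0$ does not give $M_0(x)\le 2/(\pi x)$ for $x<1$. For $k<1$ you need an extra input, e.g. the classical fact that $x|H^{(1)}_0(x)|^2$ increases to $2/\pi$, so the bound holds for all $x>0$. The paper's proof passes over this point too.

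On the norm, your bound $\|G\|_2\le\frac{k}{4}\bigl(|H^{(1)}_0|+|H^{(1)}_1|\bigr)$ is correct. It follows because $\frac{x-x'}{r}\beta+\frac{y-y'}{r}\alpha$ is Hermitian and unitary, and it is exactly what gives coefficient $1$ on the $H^{(1)}_1$ term in the statement. Falling back on entrywise bounds, as you suggest, reproduces the paper's row-sum estimate $|H^{(1)}_0|+\sqrt{2}\,|H^{(1)}_1|$, from which the paper then silently drops the $\sqrt{2}$.

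A norm-equivalence constant cannot be ``absorbed'', because there is no slack. Since $H^{(1)}_1(t)\approx-\mathrm{i}H^{(1)}_0(t)$ for large $t$, one has $\|G\|_2\approx\frac{k}{2}|H^{(1)}_0|$. So for $\Omega$ a disk and $\mathbf{x}$ its center, the displayed bound is sharp to leading order as $R\to\infty$. For the max row/column-sum norm, $\|G\|\approx\frac{k}{4}\bigl(1+|\cos\theta|+|\sin\theta|\bigr)|H^{(1)}_0|$. Its angular mean $\frac{k}{4}(1+4/\pi)$ exceeds $\frac{k}{2}$, so under that reading the inequality fails for $\Omega$ a large disk.

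You should therefore state explicitly that $\|\cdot\|_1$ in \eqref{def of mua} is taken to be the spectral norm. With that convention, the rearrangement step, and the all-$x$ far-field bound, your outline proves the stated estimate.
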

\begin{proof}
We denote the four entries of the Green’s function matrix by
\begin{equation*}
    G(\mathbf{x},\mathbf{x}^{\prime}) 
    = \begin{pmatrix}
        G_{11}(\mathbf{x},\mathbf{x}^{\prime}) & G_{12}(\mathbf{x},\mathbf{x}^{\prime}) \\
        G_{21}(\mathbf{x},\mathbf{x}^{\prime}) & G_{22}(\mathbf{x},\mathbf{x}^{\prime})
    \end{pmatrix}.
\end{equation*}
We first estimate 
\(|G_{11}(\mathbf{x},\mathbf{x}^{\prime})|+|G_{12}(\mathbf{x},\mathbf{x}^{\prime})|\).
For $\mathbf{x}\neq \mathbf{x}^{\prime}$, define $r=|\mathbf{x}-\mathbf{x}^{\prime}|$, then
\begin{align*}
    \frac{4}{k}\Big(|G_{11}(\mathbf{x},\mathbf{x}^{\prime})|+|G_{12}(\mathbf{x},\mathbf{x}^{\prime})|\Big)
    &=\Big|\mathrm{i}H^{(1)}_0(k r)+\frac{x-x^{\prime}}{r}H^{(1)}_1(k r)\Big| \\
    &\quad +\Big|\frac{y-y^{\prime}}{r}H^{(1)}_1(k r)\Big| \\
    &\leq |H^{(1)}_0(k r)|+\sqrt{2}\,|H^{(1)}_1(k r)|.
\end{align*}
From the lemma, $|H^{(1)}_0(k r)|$ is monotone decreasing for $r>0$, hence
\begin{align*}
    \sup_{\mathbf{x}\in\Omega}\int_{\Omega}|H^{(1)}_0(k|\mathbf{x}-\mathbf{x}^{\prime}|)|\,d \mathbf{x}^{\prime}
    &\leq2\pi\int_{0}^{R}|H^{(1)}_0(k r)|r\,dr \\
    &\leq 2\pi\int_{0}^{1}r\sqrt{1+\frac{4}{\pi^2}\big(\gamma+\log (k r / 2)\big)^2}\,dr \\
    &\quad+2\pi\int_{1}^{R}r\sqrt{\frac{2}{\pi k r}}\,dr \\
    &\leq 2\pi\int_{0}^{1}\Big(r+|\frac{2 r}{\pi}\big(\gamma+\log (k r / 2)\big)|\Big)\,dr \\
    &\quad+\frac{4}{3} \sqrt{\frac{2 \pi}{k}}\big(R^{3 / 2}-1\big)\\
    &= I(k)
    +\frac{4}{3} \sqrt{\frac{2 \pi}{k}}\big(R^{3 / 2}-1\big).
\end{align*}
On the other hand, since $\dfrac{4}{\pi^2 x^{2}}+\dfrac{2}{\pi x}$ is decreasing for $x>0$, it follows that
\begin{align*}
    \sup_{\mathbf{x}\in\Omega}\int_{\Omega}|H^{(1)}_1(k|\mathbf{x}-\mathbf{x}^{\prime}|)|\,d \mathbf{x}^{\prime}
    &\leq\sup_{\mathbf{x}\in\Omega} \int_{\Omega}\sqrt{\frac{4}{\pi^2 (k|\mathbf{x}-\mathbf{x}^{\prime}|)^{2}}+\frac{2}{\pi k|\mathbf{x}-\mathbf{x}^{\prime}|}}\,d \mathbf{x}^{\prime}\\
    &\leq 2\pi\int_{0}^{R}r\sqrt{\frac{4}{\pi^2 (k r)^{2}}+\frac{2}{\pi k r}}\,dr\\
    &=\frac{2\pi^2 k}{3}\left(\left(\frac{2}{\pi k} R+\frac{4}{\pi^2 k^2}\right)^{3 / 2}-\left(\frac{2}{k\pi}\right)^{3 }\right).
\end{align*}
The estimate for \(|G_{21}(\mathbf{x},\mathbf{x}^{\prime})|+|G_{22}(\mathbf{x},\mathbf{x}^{\prime})|\) is similar, establishing the required results for $\mu_a$. 
\end{proof}
\bibliographystyle{elsarticle-num} 
\bibliography{ref}

\end{document}